\def\tb{\textbf}
\def\a{{\alpha}}
\def\G{{\mathcal{G}}}
\def\O{{\mathcal{O}}}
\def\Z{{\mathbb{Z}}}
\def\N{{\mathbb{N}}}
\def\Q{{\mathbb{Q}}}
\def\R{{\mathbb{R}}}
\def\ol{\overline}
\def\Pic{{\mathrm{Pic}}}
\theoremstyle{plain}
\newtheorem{thm}{Theorem}[section]
\newtheorem{cor}[thm]{Corollary}
\newtheorem{prop}[thm]{Proposition}
\newtheorem{conj}[thm]{Conjecture}
\newtheorem{lem}[thm]{Lemma}
\theoremstyle{definition}
\newtheorem{defn}[thm]{Definition}
\theoremstyle{remark}
\newtheorem{rem}[thm]{Remark}
\newtheorem{notation}[thm]{Notation}
\newtheorem{cln}[thm]{Claim}
\newtheorem*{acknowledgement}{Acknowledgments}
\begin{document}

\title[Abundance for 3-folds with non-trivial Albanese maps]{Abundance for 3-folds with non-trivial Albanese maps in positive characteristic}

\address{Lei Zhang\\School of Mathematical Science\\University of Science and Technology of China\\Hefei 230026, P.R.China.}
\email{zhleimath@163.com, zhlei18@ustc.edu.cn}
\author{Lei Zhang}
\thanks{2010 \emph{Mathematics Subject Classification}: 14E05, 14E30.}
\thanks{\emph{Keywords}: Kodaira dimension; Albanese map; Fourier-Mukai transform.}
\maketitle

\begin{abstract}
In this paper, we prove abundance for 3-folds with non-trivial Albanese maps, over an algebraically closed field of characteristic $p > 5$.
\end{abstract}

\section{Introduction}
Over an algebraically closed field of characteristic $p > 5$, existence of log minimal models of 3-folds has been proved by Birkar, Hacon and Xu (\cite{Bir16} \cite{HX15}); and
log abundance has been proved for minimal klt pairs $(X,B)$ when $K_X + B$ is big or $B$ is big (\cite{Bir16} \cite{CTX15} \cite{Xu15}), and when $X$ is non-uniruled and has non-trivial Albanese map (\cite{Zh16c}).

This paper aims to prove abundance for 3-folds with non-trivial Albanese maps.
\begin{thm}\label{abundance}
Let $X$ be a klt, $\Q$-factorial, projective minimal 3-fold, over an algebraically closed field  $k$ with $\mathrm{char}~k = p >5$. Assume that
the Albanese map $a_X: X \rightarrow A_X$ is non-trivial.
Then $K_X$ is semi-ample.
\end{thm}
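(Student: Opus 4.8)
The plan is to analyse the Albanese map $a_X\colon X\to A_X$ case by case, using generic vanishing and Fourier--Mukai transforms on the abelian variety $A_X$ to control pluricanonical systems, and to reduce everything to abundance in dimensions $\le 2$ (classical for klt surface pairs and curves in characteristic $p$) and, where it applies, to the non-uniruled case of \cite{Zh16c}.

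\textbf{Step 1: non-vanishing.} I would first prove $\kappa(X)\ge 0$. Write $a\colon X\to A:=A_X$ and $Y:=a(X)$. The idea is to feed the positive-characteristic generic vanishing theorem of Hacon--Patakfalvi into the Frobenius-stable sheaf $S^0\bigl(a_*\omega_X^{[m]}\bigr)$, or rather into its Fourier--Mukai transform on $A$: non-triviality of $a$ should force this sheaf to be nonzero for $m$ sufficiently divisible, and then twisting by a sufficiently general element of $\Pic^0(A)$ yields a nonzero section of $\omega_X^{[m]}$. Since $K_X$ is nef, $X$ carries no free rational curve and is in particular not separably uniruled; ruling out the remaining purely inseparable uniruledness --- needed in order to quote \cite{Zh16c} --- is part of this analysis, and here too the maximal rationally connected fibration, through which $a$ must factor, provides the leverage.

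\textbf{Step 2: $\kappa(X)\ge 1$.} If $\kappa(X)=3$ then $K_X$ is big and the statement follows from \cite{Bir16}, \cite{CTX15}, \cite{Xu15}. If $\kappa(X)\in\{1,2\}$, I would replace $X$ by a suitable $\Q$-factorial minimal model, resolve, and pass to the Iitaka fibration $f\colon X\to Z$ with $\dim Z=\kappa(X)$; by construction the general fibre $F$ satisfies $\kappa(F)=0$, hence is non-uniruled, and abundance in dimension $\le 2$ gives $K_F\sim_\Q 0$. The positive-characteristic canonical bundle formula $K_X\sim_\Q f^*(K_Z+B_Z+M_Z)$ with $(Z,B_Z)$ klt and $M_Z$ nef --- available here because the fibres admit good minimal models with bounded index --- then reduces semi-ampleness of $K_X$ to semi-ampleness of $K_Z+B_Z+M_Z$ on the curve or surface $Z$. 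One arranges $f$ to factor through the Albanese, so that $Z$ again carries a controllable map to an abelian variety, and concludes by abundance for klt surface and curve pairs, which is known.

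\textbf{Step 3: $\kappa(X)=0$, and the main obstacle.} The heart of the proof is the case $K_X\equiv 0$, where numerical triviality must be upgraded to $K_X\sim_\Q 0$. Here the Albanese is decisive. If $\dim Y=3$, then $X$ has maximal Albanese dimension with $\kappa(X)=0$, and a positive-characteristic analogue of Kawamata's theorem --- proved via the Fourier--Mukai transform of $a_*\mathcal{O}_X$ and $a_*\omega_X$ on $A$ --- shows that $a$ is birational, so that $X$ and $A$ are crepant birational and $K_X$ is torsion. If $\dim Y\le 2$, then $a$ factors as a genuine fibration onto a lower-dimensional abelian subvariety, along which the numerically trivial class $K_X$ is descended, and one finishes by induction on dimension, using Step 2 and surface abundance on the base. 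I expect this $\kappa(X)=0$ step --- together with the precise form of char-$p$ generic vanishing needed in Step 1 and with making the canonical bundle formula of Step 2 effective (controlling the moduli part $M_Z$) --- to be where essentially all the difficulty lies: in positive characteristic one has neither Kodaira-type vanishing nor the Hodge-theoretic generic vanishing package, so every cohomological input must be replaced by its Frobenius-stable surrogate on $A_X$, and one must guard throughout against inseparable pathologies, such as the failure of generic smoothness, purely inseparable uniruledness, and torsion subgroups of $\Pic^0$.
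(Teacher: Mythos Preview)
Your outline has two genuine gaps.

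First, Step~3 conflates $\kappa(X)=0$ with $K_X\equiv 0$. These are not equivalent: $\kappa(X)=0$ is compatible with $\nu(K_X)>0$, and ruling that out is precisely abundance. Your sketch treats only the numerically trivial sub-case and says nothing about how to exclude $\kappa=0$, $\nu>0$. Second, Step~2 rests on a canonical bundle formula $K_X\sim_\Q f^*(K_Z+B_Z+M_Z)$ with $(Z,B_Z)$ klt and $M_Z$ nef; this is not available in characteristic $p$ in the generality you invoke --- the Remark after Theorem~\ref{0} flags that even effectivity of the divisorial part is known only under very restrictive hypotheses, and the moduli part is worse. In any case Step~2 is unnecessary: once $\kappa(X)\ge 1$, Waldron's theorem \cite{Wal15} (Theorem~\ref{s-amp-k=2} here) gives semi-ampleness of $K_X$ in one line, with no descent to $Z$.

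The paper's architecture is quite different from your case split on $\kappa(X)$. The organizing result is subadditivity of Kodaira dimension (Theorem~\ref{Iit-conj}): for the fibration $f\colon X\to Y$ arising from Stein factorization of $a_X$, one proves $\kappa(X)\ge\kappa(X_\eta,K_{X_\eta})+\kappa(Y)$. Since $\kappa(X_\eta)\ge 0$ by relative abundance (Theorem~\ref{rel-mmp}(3.2)) and $\kappa(Y)\ge 0$ as $Y$ is of maximal Albanese dimension, this forces $\kappa(X)\ge 1$ --- hence done by Waldron --- unless $\kappa(Y)=\kappa(X_\eta)=0$. Those residual cases are dispatched directly: $\dim Y=1$ by Theorem~\ref{fib-to-curve}, and $\dim Y=2$ via the observation that $f$ is then an elliptic fibration (Proposition~\ref{fib-pa=1}), so $X$ is non-uniruled and \cite{Zh16c} applies. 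The Fourier--Mukai and Frobenius-trace machinery you describe is indeed the technical core, but it is deployed inside the proof of subadditivity (Theorems~\ref{CGG} and~\ref{fib-to-ab-var}), not as a freestanding non-vanishing argument; and the possible uniruledness of $X$ --- which you worry about in Step~1 --- is absorbed precisely by proving $C_{3,m}$ for fibrations whose geometric generic fibre may be singular.
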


Precisely we prove log abundance in some cases.
\begin{thm}\label{abundance-log}
Let $(X, B)$ be a klt, $\Q$-factorial, projective minimal pair of dimension 3, over an algebraically closed field  $k$ with $\mathrm{char}~k = p >5$. Assume that
the Albanese map $a_X: X \rightarrow A_X$ is non-trivial. Denote by $f: X \to Y$ the fibration arising from the Stein factorization of $a_X$ and by $X_{\eta}$ the generic fiber of $f$.
Assume moreover that $B=0$ if

(1) $\dim Y = 2$ and $\kappa(X_{\eta}, (K_X + B)|_{X_{\eta}})=0$, or

(2) $\dim Y = 1$ and $\kappa(X_{\eta}, (K_X + B)|_{X_{\eta}})=1$.\\
Then $K_X + B$ is semi-ample.
\end{thm}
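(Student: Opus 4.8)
The argument begins with two reductions. If $K_X+B$ is big we are done by the known big case of log abundance for minimal klt $3$-fold pairs in characteristic $p>5$ (\cite{Bir16}, \cite{CTX15}, \cite{Xu15}); so we may assume $K_X+B$ is not big, and since $K_X+B$ is nef this gives $\nu(X,K_X+B)\le 2$. Next, $(X_\eta,B_\eta)$ with $B_\eta:=B|_{X_\eta}$ is a klt pair of dimension at most $2$ and $(K_X+B)|_{X_\eta}=K_{X_\eta}+B_\eta$ is nef, so abundance in dimension $\le 2$ shows $(K_X+B)|_{X_\eta}$ is semi-ample; in particular $\kappa_\eta:=\kappa(X_\eta,(K_X+B)|_{X_\eta})$ equals $\nu(X_\eta,(K_X+B)|_{X_\eta})$, and the Iitaka fibration of $(K_X+B)|_{X_\eta}$ is a morphism onto a variety of dimension $\kappa_\eta$. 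I would then split the argument according to whether $K_X+B$ is big over $Y$.

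\medskip
\noindent\emph{Case A: $K_X+B$ is $f$-big}, i.e.\ $\kappa_\eta=\dim X_\eta$ (this includes $\dim Y=3$, where $f$ is birational). Replacing $(X,B)$ by its relative canonical model over $Y$ --- again a klt, $\Q$-factorial minimal $3$-fold with the same Albanese map, since the contracted curves are $(K_X+B)$-trivial --- we may assume $K_X+B$ is ample over $Y$. Since the Stein factorization writes $a_X$ as $X\xrightarrow{f}Y\to A_X$ with $Y\to A_X$ finite, the fibres of $a_X$ are finite unions of fibres of $f$, so $K_X+B$ is $a_X$-ample. Writing $K_X+B=K_{X/A_X}+B$ (as $\omega_{A_X}\cong\mathcal{O}_{A_X}$), the sheaves $a_{X*}\mathcal{O}_X(m(K_X+B))$ can be controlled on $A_X$ via the Fourier--Mukai transform and the generic-vanishing techniques available in characteristic $p$ (following \cite{Zh16c}), and this yields the semi-ampleness of $K_X+B$.

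\medskip
\noindent\emph{Case B: $K_X+B$ is not $f$-big}, i.e.\ $\kappa_\eta<\dim X_\eta$, which forces $\dim Y\le 2$. Because $(K_X+B)|_{X_\eta}$ is semi-ample, after a harmless birational modification of $X$ there is a fibration $f'\colon X\to Y'$ through which $f$ factors, with $\dim Y'=\dim Y+\kappa_\eta\in\{1,2\}$ and $K_X+B\sim_{\Q,Y'}0$ (the relative Iitaka fibration of $K_X+B$ over $Y$). The canonical bundle formula applied to $f'$ gives $K_X+B\sim_\Q (f')^{*}(K_{Y'}+B_{Y'}+M_{Y'})$ with $(Y',B_{Y'})$ sub-klt and $M_{Y'}$ the moduli part; as $f'$ is surjective, $K_{Y'}+B_{Y'}+M_{Y'}$ is nef, and it suffices to prove it is semi-ample on $Y'$. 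If $\dim Y'=1$, then $Y'$ is a smooth curve admitting a finite map to $A_X$, hence $g(Y')\ge 1$ and $\deg(K_{Y'}+B_{Y'})\ge 0$; moreover $\deg M_{Y'}\ge 0$ by the semipositivity of the canonical bundle formula over a curve, so $\deg(K_{Y'}+B_{Y'}+M_{Y'})\ge 0$ and the divisor is semi-ample. This settles the configuration $\dim Y=1$, $\kappa_\eta=0$ with $B$ arbitrary. The remaining configurations --- $\dim Y=2$, $\kappa_\eta=0$ and $\dim Y=1$, $\kappa_\eta=1$, which are precisely the cases (1) and (2) in which $B=0$ is assumed --- both give $\dim Y'=2$; there the vanishing of $B$ lets one invoke the positivity of the moduli part of a klt-trivial fibration with trivial horizontal boundary in characteristic $p>5$ to get $M_{Y'}$ nef, and then $B_{Y'}\ge 0$, $(Y',B_{Y'})$ is klt, $K_{Y'}+B_{Y'}+M_{Y'}$ is nef, so abundance for (generalized) klt surface pairs yields semi-ampleness. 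Pulling back along $f'$ finishes the proof.

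\medskip
The main obstacle is the positivity of the moduli part of the canonical bundle formula in positive characteristic. Over a one-dimensional base it is the semipositivity of Hodge-type bundles and is robust enough to allow a non-trivial boundary --- hence the absence of any assumption on $B$ in the sub-case $\dim Y'=1$ --- but over a surface base the available statements require the horizontal boundary to vanish, and this is exactly why $B=0$ is imposed in (1) and (2). The second delicate ingredient, pervasive in Case A, is the Fourier--Mukai/generic-vanishing analysis of $a_{X*}\mathcal{O}_X(m(K_X+B))$ on $A_X$, together with the relative canonical model construction and the base-point-free theorem for $3$-folds in characteristic $p>5$ used in the reductions; these are subtle in positive characteristic because Kodaira-type vanishing fails, so one must work with Frobenius-stabilized cohomology and keep $p>5$ in force throughout.
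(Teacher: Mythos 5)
Your skeleton (dispose of the big case, split on $f$-bigness, push $K_X+B$ down to the base of a relative Iitaka fibration via a canonical bundle formula) is the standard characteristic-zero route, but it breaks in characteristic $p$ exactly where this paper has to work hardest, and the paper's actual proof is structured differently: it first reduces, via Waldron's theorem (Theorem \ref{s-amp-k=2}), to showing $\kappa(X,K_X+B)\ge 1$ or $K_X+B\sim_{\Q}0$, then invokes the subadditivity statement Theorem \ref{Iit-conj} to force $\kappa(Y)=\kappa(X_\eta,(K_X+B)_\eta)=0$, and finally treats the two residual configurations by Theorem \ref{fib-to-curve} and by \cite{Zh16c}. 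The most serious gap in your Case B is the reliance on a canonical bundle formula $K_X+B\sim_{\Q}f'^*(K_{Y'}+B_{Y'}+M_{Y'})$ with $B_{Y'}$ effective and $M_{Y'}$ nef. No such statement is available in positive characteristic in the generality you need; as the paper notes explicitly, effectivity of the discriminant/moduli part is known only under very strong hypotheses (\cite[Lemma 6.7]{CTX15}, \cite[Theorem 3.18]{Ej15}, \cite[Theorem B]{DS15}), and the only canonical bundle formula actually used here is the one for elliptic fibrations from \cite[3.2]{CZ15}, which is why hypothesis $\spadesuit$ and Proposition \ref{fib-pa=1} are needed to force the relative Iitaka fibration to be fibred by smooth elliptic curves, and why $B=0$ is imposed in cases (1) and (2). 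Moreover, even granting your formula, the subcase $\dim Y'=1$ concludes semi-ampleness from $\deg(K_{Y'}+B_{Y'}+M_{Y'})\ge 0$; this is false when the degree is $0$, since a degree-zero nef bundle on a curve of genus $\ge 1$ is semi-ample only if it is torsion. Establishing torsion in that degenerate case is precisely the content of Theorem \ref{fib-to-curve}: one builds an auxiliary nef divisor $D=aH-bF$ with $\nu(D)=2$, makes $D$ effective after twisting by some $P\in\Pic^0(Y)$ via a Fourier--Mukai/Riemann--Roch argument, applies Theorem \ref{fib-to-ab-var} to get a second fibration $g:X\to Z$, and reads off torsionness by restricting to the fibers of $g$. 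Your outline skips all of this.

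In Case A the entire difficulty is compressed into ``Fourier--Mukai and generic-vanishing techniques following \cite{Zh16c} yield semi-ampleness.'' But \cite{Zh16c} treats non-uniruled $X$ of maximal Albanese dimension; the new content of the present paper is the uniruled case, where the geometric generic fiber of $f$ may be singular and the weak positivity results of Patakfalvi and Ejiri fail. The paper's substitute is Theorem \ref{CGG} --- a weak generic-vanishing statement for the image $\mathcal F$ of the Frobenius trace map $Tr^{e,k}_{X,B}$ --- combined with Serre's ``killing cohomology'' to split $\pi^*\mathcal F$ into line bundles after an isogeny, followed by an adjunction argument on components of divisors $D_i\sim m(K_{\hat X}+\hat B)+l\hat f^*L_i$ and abundance for log surfaces to show the $L_i$ are torsion; separately, the case where the base has big canonical class uses Corollary \ref{cor-Zh16a}, and inseparable $f$ requires the foliation reduction of Proposition \ref{red-to-sep}. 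None of these ingredients appear in your proposal, so as written it does not constitute a proof.
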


Throughout of this paper, as $f: X \to Y$ frequently appears as a projective morphism of varieties, we denote by $\eta$ (resp. $\bar{\eta}$) the generic (resp. geometric generic) point of $Y$ and by $X_{\eta}$ (resp. $X_{\bar{\eta}}$) the generic (resp. geometric generic) fiber of $f$. Moreover we say $f: X \to Y$ is a \emph{fibration} if $f_*\O_X = \O_Y$.

To study abundance for varieties with non-trivial Albanese maps, it is necessary to study the following conjecture on subadditivity of Kodaira dimensions.
\begin{conj}[Iitaka conjecture] Let $f:X\rightarrow Y$ be a fibration between two smooth projective varieties over an algebraically closed field $k$, with $\dim X = n$ and $\dim Y = m$. Then
  $$C_{n,m}: \kappa(X)\geq \kappa(Y) + \kappa(X_{\eta}, K_{X_\eta}).$$
\end{conj}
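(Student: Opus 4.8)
The plan is not to attack $C_{n,m}$ in full generality---it is open---but to establish the instances actually forced on us by Theorems~\ref{abundance} and \ref{abundance-log}: $n=\dim X=3$, and $f\colon X\to Y$ the fibration arising from (the Stein factorization of) a non-trivial Albanese map, so that $Y$ admits a non-trivial morphism $g\colon Y\to A$ to an abelian variety. Since the operations below leave both sides of the inequality unchanged, we work up to birational modification of $X$ and $Y$, to finite \'etale covers of $A$ (pulled back to $Y$), and---the characteristic-$p$ feature---up the relative Frobenius tower of $f$. Stratify by $\dim Y\in\{1,2,3\}$: if $\dim Y=3$ then $f$ is generically finite and $C_{3,3}$ is immediate; if $\dim Y=2$ the fibers are curves; if $\dim Y=1$ the fibers are surfaces. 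In the latter two cases one stratifies further by $\kappa(X_{\eta},K_{X_{\eta}})$, using the classification of curves and surfaces ($p>5$ is comfortable here, excluding quasi-elliptic fibrations and other small-characteristic pathologies in the fibers), and by $\kappa(Y)$, for which the surface MMP and Iitaka fibration on $Y$ are available since $\dim Y\le 2$; the case $\kappa(Y)=-\infty$ being vacuous, only $0\le\kappa(Y)\le\dim Y-1$ and $\kappa(Y)=\dim Y$ survive.

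The common mechanism is: for a suitable multiple $m$, produce many sections of $\omega_{X/Y}^{\otimes m}$ over a dense open of $Y$ and spread them using positivity on $Y$, since $H^{0}(X,\omega_{X}^{\otimes m})=H^{0}\bigl(Y,f_{*}\omega_{X/Y}^{\otimes m}\otimes\omega_{Y}^{\otimes m}\bigr)$. When $K_Y$ is big---in particular when $\kappa(Y)=\dim Y$---this is soft: $f_{*}\omega_{X/Y}^{\otimes m}\ne 0$ for $m$ in the Iitaka range of $X_{\eta}$, and twisting by a large multiple of a big class forces global sections, giving $\kappa(X)\ge\dim Y+\kappa(X_{\eta})$. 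For $\kappa(Y)=1$ one peels off the base Iitaka fibration $Y\to Z$ onto a curve and argues by composition; and for $\kappa(Y)=0$, after an \'etale cover one reaches the case where $g\colon Y\to A$ is birational onto an abelian variety of dimension $\dim Y$, so $\kappa(Y)=0$ and the goal is the sharp bound $\kappa(X)\ge\kappa(X_{\eta},K_{X_{\eta}})$.

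For this Albanese case the tool is Fourier--Mukai. Over $\mathbb{C}$ one knows (Kawamata; Chen--Hacon) that $f_{*}\omega_{X/Y}^{\otimes m}$ is a GV-sheaf on $Y$---its pushforward to $A$ has Fourier--Mukai transform concentrated in the expected codimension---and this nef-type positivity, together with a pluricanonical section over the generic point of $Y$, yields $\kappa(X,\omega_{X/Y}^{\otimes m})\ge\kappa(X_{\eta},\omega_{X_{\eta}}^{\otimes m})$. In characteristic $p$ this fails for $f_{*}\omega_{X/Y}^{\otimes m}$ itself but holds for its Frobenius-stable subsheaf $S^{0}$---the intersection over $e$ of the images of the iterated relative-Frobenius trace maps---by the positivity theorems for such sheaves over abelian varieties (Hacon--Patakfalvi and subsequent work): its pushforward to $A$ is again GV, hence $0$-regular after twisting by an ample bundle, and its formation is compatible with base change to $A$. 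The same section-chasing then goes through---\emph{provided} the corresponding space $S^{0}$ on the geometric generic fiber $X_{\bar\eta}$ is nonzero and of the expected size.

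That proviso is the crux, and where characteristic $p$ resists. The geometric generic fiber may be non-normal or non-reduced, and even for a nice fiber curve or surface one can have $S^{0}(X_{\bar\eta},\omega_{X_{\bar\eta}}^{\otimes m})\subsetneq H^{0}$, possibly zero while $\kappa(X_{\bar\eta})\ge 0$, which would break the argument. I would control this by: (i) choosing the birational model and the Frobenius level so that $f$ is separable with geometrically normal generic fiber, using $p>5$ and $\dim X_{\eta}\le 2$ to bound the resulting correction; (ii) for fiber Kodaira dimension $0$ (elliptic curves; abelian, bielliptic, or Calabi--Yau-type surfaces) bypassing $S^{0}$ through the explicit fibration structure---$f_{*}\omega_{X/Y}$ is a nef line bundle for a genuine elliptic fibration, so a canonical-bundle-type formula gives $\kappa(X)\ge\kappa(Y)$ outright; (iii) for fiber Kodaira dimension $1$ over a curve, running the relative Iitaka fibration of $f$, which turns the problem into a log-subadditivity statement for an elliptic surface fibred over a curve---again an abelian-base, Fourier--Mukai situation---so that $S^{0}$-nonvanishing is invoked only where it is available. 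Arranging the base changes and Frobenius twists so as to keep ``$Y\to A$ birational'', ``$S^{0}$ commutes with base change'' and ``$S^{0}(X_{\bar\eta})\ne 0$'' simultaneously in force is, I expect, the real work.
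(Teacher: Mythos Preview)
Your outline tracks the paper's stratification---by $\dim Y$, $\kappa(Y)$, and $\kappa(X_\eta,K_{X_\eta})$---and correctly locates the abelian-base case as the heart, with Fourier--Mukai and Frobenius-stable images as the tools. Two concrete points, however, separate your plan from a working argument and from what the paper actually does.

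First, in the $f$-big abelian-base case you assert that GV-type positivity of the $S^0$-subsheaf, together with nonvanishing of $S^0$ on the geometric generic fiber, yields $\kappa(X)\ge\kappa(X_\eta)$. But GV only gives $V^0(\mathcal{F})\ne\emptyset$; to conclude $\kappa\ge 1$ one must show either $\dim V^0>0$ or that the twists $L_i\in\Pic^0(A)$ producing sections are \emph{torsion}. No structure theorem for $V^0$ (torsion translates of abelian subvarieties) is available here in characteristic $p$. The paper handles the zero-dimensional $V^0$ case by a geometric adjunction argument you do not mention: from the generically surjective map $\oplus_i P_i\to\pi^*\mathcal{F}$ one extracts effective divisors $D_i\sim m(K_X+B)+f^*L_i$, passes to a dlt model $(\hat X,\hat B)$ with $\hat B$ supporting the $\hat D_i$, and restricts to suitable horizontal components $G_i$; log abundance on these surfaces forces $(K_{\hat X}+\hat B)|_{G_i}$ to be semi-ample of low numerical dimension, and comparing the two linear expressions $(\clubsuit)$ for $\hat D_1,\hat D_2$ on $G_1,G_2$ shows the $L_i$ are torsion. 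This step is the paper's main new input and is absent from your plan.

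Second, for $\dim Y=1$ and $\kappa(X_\eta,(K_X+B)_\eta)=0$ with \emph{surface} fibers, you invoke a canonical-bundle formula. There is none: the fiber is a surface of Kodaira dimension zero, not an elliptic curve, and no effective formula $K_X\sim_{\Q}f^*(K_Y+B_Y)$ is available in characteristic $p$. The paper instead (Theorem~\ref{fib-to-curve}) introduces an auxiliary divisor $D=aH-bF$ with $D^3=0$, proves it nef via a pseudo-effectivity/relative-MMP argument, uses positivity of $F_Y^{g*}f_*\O_W(n\sigma^*D+K_{W/Y})$ to produce a semi-ample $D'\equiv D$ with $\nu(D')=2$, and then reads off torsion-ness of $L$ (where $k(K_X+B)\sim_{\Q}f^*L$) by restricting to a general fiber of the induced fibration $X\to Z$. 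This reduction of the $f$-$\Q$-trivial case to an $f$-big one is the substance of that theorem and is not captured by ``explicit fibration structure''.
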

The divisor $K_{X_\eta} = K_X|_{X_\eta}$ is a Cartier divisor corresponding to the dualizing sheaf of $X_{\eta}$, which is invertible since $X_{\eta}$ is regular.
In characteristic zero, since the geometric generic fiber $X_{\bar{\eta}}$ is smooth over $k(\ol\eta)$, so $\kappa(X_{\bar{\eta}}) = \kappa(X_{\eta}, K_{X_\eta})$.
In positive characteristic, $X_{\ol\eta}$ is not necessarily smooth over $k(\ol\eta)$,
if $X_{\ol\eta}$ has a smooth projective birational model $\tilde{X}_{\bar{\eta}},$\footnote{Existence of smooth resolution of singularities has been proved in dimension $\leq 3$ by \cite{CP08}.} then by \cite[Corollary 2.5]{CZ15}
$$\kappa(X_{\eta}, K_{X_\eta})= \kappa(X_{\bar{\eta}}, K_{X_{\bar{\eta}}}) \geq \kappa(\tilde{X}_{\bar{\eta}}).$$
We refer to \cite{Zh16a} for more discussions on this conjecture. In this paper we prove the following theorem, which implies Theorem \ref{abundance-log} after combined with the results of \cite{Zh16c}.

\begin{thm}[= Theorem \ref{Iit-conj}]\label{0}
Let $f: X \to Y$ be a fibration from a $\Q$-factorial projective 3-fold to a smooth projective variety of dimension $1$ or $2$, over an algebraically closed field  $k$ with $\mathrm{char}~k = p >5$. Let $B$ be an effective $\Q$-divisor on $X$ such that $(X,B)$ is klt.
Assume that $Y$ is of maximal Albanese dimension, and assume moreover that
\begin{itemize}
\item[$\spadesuit$]
if $\kappa(X_{\eta}, K_{X_{\eta}} + B_{\eta}) = \dim X/Y - 1$, then $B$ does not intersect the generic fiber $X_{\xi}$ of the relative Iitaka fibration
$I: X \dashrightarrow Z$ induced by $K_X +B$ on $X$ over $Y$.
\end{itemize}
Then
$$\kappa(X, K_X + B) \geq \kappa(X_{\eta}, K_{X_{\eta}} + B_{\eta}) + \kappa(Y).$$
\end{thm}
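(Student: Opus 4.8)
The plan is a case analysis on $\kappa_\eta := \kappa(X_\eta, K_{X_\eta} + B_\eta)$, in each case reducing, via the relative Iitaka fibration, to a subadditivity statement for a divisor that is big over $Y$, which is then settled using positivity of direct images on $Y$ coming from the Fourier--Mukai transform on $A_Y$. Write $n = \dim X = 3$, $m = \dim Y$, $d = \dim X/Y = n-m$. If $\kappa_\eta = -\infty$ there is nothing to prove, so I may assume $\kappa_\eta \geq 0$. Since $Y$ is smooth projective of maximal Albanese dimension and $\dim Y \leq 2$, one has $\kappa(Y) \geq 0$ (genus $\geq 1$ curves; the classification of surfaces). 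Hence I may also assume $\kappa(Y) \geq 0$, so both terms on the right are non-negative.

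After passing to a birational model of $X$ (resolutions exist in dimension $\leq 3$), I may assume the relative Iitaka fibration $I : X \to Z$ of $K_X + B$ over $Y$ is a morphism with $Z$ smooth, and I set $h : Z \to Y$, so that $\dim Z = m + \kappa_\eta$ and the generic fibre $X_\xi$ of $I$ satisfies $\kappa(X_\xi, (K_X + B)|_{X_\xi}) = 0$. By the relative Iitaka property there is a $\Q$-divisor $N$ on $Z$ with $N|_{Z_{\bar\eta}}$ big and $\kappa(X, K_X + B) = \kappa(Z, N)$, and $N$ can be taken effective once $K_X + B$ is relatively effective over $Z$. In the borderline case $\kappa_\eta = d - 1$ the fibre $X_\xi$ is a curve, and hypothesis $\spadesuit$ forces $(K_X + B)|_{X_\xi} = K_{X_\xi}$ to have degree $0$; thus $X_\xi$ has arithmetic genus $1$ and $\omega_{X_\xi}$ has a global section, which is exactly what makes $N$ effective and keeps the auxiliary pair $(Z, B_Z)$ klt when one runs the canonical--bundle--formula bookkeeping for $I$ (the moduli part being the one of a genus-one fibration, which is controlled classically). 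Thus in every case it remains to prove $\kappa(Z, N) \geq \dim Z_\eta + \kappa(Y) = \kappa_\eta + \kappa(Y)$, where $h : Z \to Y$ has target of maximal Albanese dimension and $N$ is big over $Y$.

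For this last step I would work on $A_Y$: since $a_Y : Y \to A_Y$ is generically finite, pushing $h_*\O_Z(mN)$ (whose rank grows like $m^{\dim Z_\eta}$ by $h$-bigness) forward to $A_Y$ and applying the Fourier--Mukai transform shows that a suitable twist of $h_*\O_Z(mN)$ by $\omega_Y$ remains effective in the generic--vanishing sense, hence has the expected order of growth of global sections, giving $\kappa(Z, N) \geq \dim Z_\eta + \kappa(Y)$; when $N$ is $h$-big of full relative dimension (i.e.\ $\kappa_\eta = d$, so $Z\to Y$ is generically finite and $K_X+B$ is big over $Y$) one may alternatively invoke log abundance for minimal pairs with big log canonical divisor together with easy addition over $A_Y$. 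Combining this with the reduction of the previous paragraph closes the argument.

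The \emph{main obstacle} is that the direct-image positivity one would use over an arbitrary base --- weak positivity of $f_*\omega_{X/Y}^{[m]}$ and semiampleness of the moduli part of the canonical bundle formula --- fails in positive characteristic; this is precisely why the theorem restricts $Y$ to maximal Albanese dimension (so Fourier--Mukai on $A_Y$ can stand in for weak positivity) and imposes $\spadesuit$ (so that the Iitaka fibres in the borderline case $\kappa_\eta = d-1$ are genus-one curves, whose fibrations are understood). A further, more routine, difficulty is carrying out the birational reductions and the canonical bundle formula for $I$ while preserving $\Q$-factoriality, the klt condition, and control of the $I$-exceptional divisors, but in dimension $3$ this is standard.
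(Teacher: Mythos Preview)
Your high-level architecture---case analysis on $\kappa_\eta$, reduction via the relative Iitaka fibration, and positivity on the Albanese via Fourier--Mukai---matches the paper's, but the two load-bearing steps are not proofs as written.

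First, your ``last step'' is the bulk of the paper's work (its Theorems~\ref{fib-to-ab-var} and~\ref{CGG}) and is far more delicate than ``Fourier--Mukai shows a twist remains effective in the generic-vanishing sense''. In positive characteristic, $f_*\O_X(m(K_X+B))$ is \emph{not} known to be a GV-sheaf, so no such statement is available. The paper instead takes the image $\mathcal{F}$ of the trace map of iterated Frobenius, uses Frobenius amplitude and Fujita vanishing to verify the hypotheses of a tailor-made injectivity result (Theorem~\ref{CGG}), and thereby embeds $\mathcal{F}^*$ into a homogeneous bundle. When $V^0(\mathcal{F})$ is zero-dimensional---exactly the hard case on a simple abelian $A$---one then applies killing cohomology to split this bundle after an isogeny, obtains effective divisors $D_i\sim m(K_X+B)+f^*L_i$ with $L_i\in\mathrm{Pic}^0(A)$, passes to a dlt model, and runs an adjunction argument on components of the $D_i$ together with abundance for log surfaces to force the $L_i$ to be torsion. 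None of this mechanism is visible in your sketch.

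Second, your reduction does not handle $\kappa_\eta=0$ with $\dim Y=1$: here $Z=Y$, so there is no ``big over $Y$'' divisor to feed into positivity; one only has $K_X+B\sim_{\Q}f^*L$ for a degree-zero $L$ on the elliptic curve $Y$, and the whole content is showing $L$ is torsion. The paper (Theorem~\ref{fib-to-curve}) does this by an independent construction: it builds a nef divisor $D=aH-bF$ with $\nu(D)=2$, uses positivity of direct images applied to $D$ (not to $K_X+B$) to find an effective $D'\equiv D$, then applies Theorem~\ref{fib-to-ab-var} to $K_X+B+tD'$ to manufacture a fibration to a surface and restrict. Finally, two issues you label ``routine'' are not: the canonical bundle formula in characteristic $p$ does not in general yield a klt $(Z,B_Z)$ with effective moduli part, which is why the paper invokes only the elliptic-fibration case of \cite{CZ15} under $\spadesuit$; and when $\dim Y=2$ the fibration may be inseparable, which the paper handles separately via foliations (Proposition~\ref{red-to-sep}).
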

\begin{rem}
As $\mathrm{char}~k = p >5$, for a fibration $h: X \to Z$, if the generic fiber is a curve with arithmetic genus one, then the geometric generic fiber must be a smooth elliptic curve (Proposition \ref{fib-pa=1}). So in case $\kappa(X_{\eta}, K_{X_{\eta}} + B_{\eta}) = \dim X/Y - 1$, the assumption $\spadesuit$ guarantees that the relative Iitaka fibration $I: X \dashrightarrow Z$ is fibred by elliptic curves. The advantage of an elliptic fibration is the canonical bundle formula: if $h: W \to Z$ is a flat relative minimal elliptic fibration then there exists an effective divisor $B_Z$ on $Z$ such that $K_W \sim_{\Q} h^*(K_Z + B_Z)$ (\cite[3.2]{CZ15}). Canonical bundle formula is the key technique to study log abundance (\cite{KMM94}).
Here we mention that in positive characteristic, only under some very strong conditions, $B_Z$ has been proved to be effective (\cite[Lemma 6.7]{CTX15}, \cite[Theorem 3.18]{Ej15}, \cite[Theorem B]{DS15}).
\end{rem}

The result above has been proved when the geometric generic fiber $X_{\bar{\eta}}$ is smooth and $B=0$ (\cite{CZ15, EZ16}), which was used in \cite{Zh16c} to prove log abundance for non-uniruled 3-folds with non-trivial Albanese maps.
To study the uniruled case, we have to treat fibrations with singular geometric generic fibers. For a separable fibration with possibly singular geometric generic fiber, $C_{3,m}$ has been proved by the author \cite{Zh16a} when $K_X + B$ is $f$-big, $K_Y$ is big and $Y$ is of maximal Albanese dimension.
To prove Theorem \ref{0}, the essentially new results are the following two theorems.
\begin{thm}[= Theorem \ref{fib-to-ab-var}]\label{1}
Let $(X,B)$ be a projective $\Q$-factorial klt pair of dimension 3, over an algebraically closed field  $k$ with $\mathrm{char}~k = p >5$. Let $f: X \to Y=A$ be a fibration to an elliptic curve or a simple abelian surface. Assume that $K_X + B$ is $f$-big. Then
$$\kappa(K_X + B) \geq \kappa(X_{\eta}, K_{X_{\eta}} + B_{\eta}).$$
\end{thm}

\begin{thm}[= Theorem \ref{fib-to-curve}]\label{2}
Let $(X,B)$ be a $\Q$-factorial klt pair of dimension 3, over an algebraically closed field  $k$ with $\mathrm{char}~k = p >5$. Let $f: X \to Y$ be a fibration to a normal curve $Y$ of genus $g(Y) \geq 1$. Assume $\kappa(X_{\eta}, (K_X + B)|_{X_{\eta}}) = 0$. Then
$$\kappa(X, K_X + B) \geq \kappa(Y).$$

If moreover $K_X +B$ is nef then it is semi-ample.
\end{thm}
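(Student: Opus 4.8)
The plan is to first replace $(X,B)$ by a relative minimal model over $Y$ and then reduce both assertions to a single statement about an invertible sheaf on $Y$. Running a $(K_X+B)$-MMP over $Y$, which terminates for $\Q$-factorial klt $3$-folds when $p>5$ by \cite{Bir16,HX15}, and noting that the general fibre is non-uniruled (on it $K_X+B$ has Iitaka dimension $0$), one checks that the MMP cannot end with a relative Mori fibre space over $Y$, since such a space would exhibit the general fibre of $X\to Y$ as a uniruled surface. So we may assume $K_X+B$ is nef over $Y$; this step preserves $\kappa(X,K_X+B)$ and is vacuous when $K_X+B$ is already nef, hence harmless for either conclusion. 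Now $K_{X_\eta}+B_\eta=(K_X+B)|_{X_\eta}$ is nef (being $f$-nef, resp. the restriction of a nef divisor) and has Iitaka dimension $0$, so by log abundance for klt surface pairs it is $\Q$-linearly trivial. Consequently, for a suitable $m>0$, $\mathcal L:=f_*\O_X(m(K_X+B))$ is invertible on $Y$ and $m(K_X+B)\sim f^*\mathcal L+V$ for some effective $f$-vertical divisor $V$; in particular $\kappa(X,K_X+B)\ge\kappa(Y,\mathcal L)$. When $K_X+B$ is nef one gets more: for a general ample $H$, nefness gives $0\le(m(K_X+B))^2\cdot H=V^2\cdot H$, while the Zariski lemma (negative semidefiniteness of the intersection form on divisors supported in a fibre of $f$) gives $V^2\cdot H\le0$; hence $V=f^*D$ for an effective $D$ on $Y$, and then $\mathcal L=f_*\O_X(m(K_X+B))=\mathcal L\otimes\O_Y(D)$ forces $D=0$, i.e. $m(K_X+B)=f^*\mathcal L$, so $K_X+B$ is semi-ample as soon as $\mathcal L$ is semi-ample on the curve $Y$. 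In every case it therefore suffices to prove that $\mathcal L$ is $\Q$-effective: either $\deg\mathcal L>0$, or $\deg\mathcal L=0$ and $\mathcal L$ is torsion.

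To bound the degree, write $K_X+B=K_{X/Y}+f^*K_Y+B$, so that $\mathcal L\cong f_*\O_X\big(m(K_{X/Y}+B)\big)\otimes\omega_Y^{\otimes m}$ and $\deg\mathcal L=\deg f_*\O_X(m(K_{X/Y}+B))+m(2g(Y)-2)$. Since $p>5$, klt surface fibres are strongly $F$-regular, so the positivity theorems for relative pluricanonical sheaves in positive characteristic (\cite{Ej15} and related work) apply and give that $f_*\O_X(m(K_{X/Y}+B))$ is nef, in particular of non-negative degree; hence $\deg\mathcal L\ge m(2g(Y)-2)\ge0$. If $g(Y)\ge2$ this forces $\deg\mathcal L\ge 2m>0$, so $\mathcal L$ is ample and $\kappa(Y,\mathcal L)=1$; then $\kappa(X,K_X+B)\ge1$, and in the nef case $K_X+B=\tfrac1m f^*\mathcal L$ is semi-ample. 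So we are reduced to the case $g(Y)=1$, $Y$ an elliptic curve, $\deg\mathcal L=0$, i.e. $\mathcal L\in\Pic^0(Y)$, where the task is to show $\mathcal L$ is torsion.

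This is where the Fourier--Mukai transform is needed. Because the relative Iitaka dimension is $0$, the fibres of $f$ — and hence $X$ itself, as $Y$ carries no rational curves — are non-uniruled, and $f$ factors through the Albanese map as $f\colon X\xrightarrow{a_X}A_X\xrightarrow{g}Y$. Consider $\mathcal G:=a_{X*}\O_X(m(K_X+B))$ on $A_X$ and its cohomological support locus $V^0(\mathcal G)=\{\alpha\in\widehat{A_X}:h^0(X,\O_X(m(K_X+B))\otimes a_X^*\alpha)\ne0\}$; since $\O_X(m(K_X+B))\otimes a_X^*(-g^*[\mathcal L])\cong\O_X(V)$ has a section, the point $-g^*[\mathcal L]$ lies in $V^0(\mathcal G)$. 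Applying the characteristic-$p$ generic vanishing / Fourier--Mukai machinery (for the Cartier-module structure carried by the relevant direct image, and using the positivity just established), $V^0(\mathcal G)$ is a finite union of torsion translates of abelian subvarieties of $\widehat{A_X}$. Intersecting with the elliptic curve $g^*\widehat Y\subseteq\widehat{A_X}$, whose only abelian subvarieties are $0$ and itself, we obtain a dichotomy: either $V^0(\mathcal G)\supseteq g^*\widehat Y$, so $0\in V^0(\mathcal G)$, whence $h^0(X,\O_X(m(K_X+B)))\ne0$ and $\mathcal L$ is effective; or $V^0(\mathcal G)\cap g^*\widehat Y$ is a finite set of torsion points, forcing $g^*[\mathcal L]$, hence $[\mathcal L]$, to be torsion. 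Either way $\mathcal L$ is $\Q$-effective, so $\kappa(X,K_X+B)\ge0$; and in the nef case $m(K_X+B)=f^*\mathcal L$ with $\mathcal L$ torsion, so $K_X+B$ is semi-ample.

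I expect the main obstacle to be exactly this last point: establishing, in characteristic $p$, that the cohomological support locus $V^0(\mathcal G)$ is a finite union of torsion translates of abelian subvarieties. The Hodge-theoretic input of Green--Lazarsfeld and Simpson available in characteristic $0$ is missing here, and must be replaced by Frobenius-stable refinements of generic vanishing in the spirit of Hacon--Patakfalvi, with the torsion ultimately traced back to the boundedness of the family of klt surface fibres of Kodaira dimension $0$; doing this with a possibly singular geometric generic fibre and a non-trivial boundary $B$ is the technical heart of the argument. By comparison, the reductions in the first two paragraphs are routine, resting only on the minimal model program and on log abundance for klt surface pairs when $p>5$.
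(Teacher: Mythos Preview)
Your reduction in the first paragraph is broadly right, though it can be simplified: once $(X,B)$ is relatively minimal over $Y$, the cone theorem over a base with no rational curves (Theorem~\ref{rel-mmp}~(3.4) in the paper) already forces $K_X+B$ to be globally nef, and relative semiampleness (Theorem~\ref{rel-mmp}~(3.3)) gives $k(K_X+B)\sim_{\Q} f^*L$ directly; the Zariski-lemma argument is not needed.

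There are, however, two genuine gaps after that.

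\emph{First}, the positivity step does not go through as stated. You invoke \cite{Ej15} to conclude that $f_*\O_X(m(K_{X/Y}+B))$ is nef because ``klt surface fibres are strongly $F$-regular''. But what is needed is that the \emph{geometric} generic fibre $(X_{\bar\eta},B_{\bar\eta})$ is strongly $F$-regular, and in positive characteristic $X_{\bar\eta}$ may fail even to be normal; the paper is explicit that Ejiri's and Patakfalvi's positivity ``may fail when $X_{\bar\eta}$ is singular''. So the inequality $\deg\mathcal L\ge m(2g(Y)-2)$ is unjustified, and with it your treatment of the $g(Y)\ge 2$ case. (Relatedly, the assertion that the general fibre is non-uniruled is false in general once a boundary $B$ is present: a ruled surface can carry a klt boundary making $K+B$ $\Q$-trivial.)

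\emph{Second}, the elliptic-curve endgame rests on the claim that $V^0(\mathcal G)$ is a finite union of torsion translates of abelian subvarieties. As you yourself note, this Simpson-type statement is not available in characteristic $p$ for the sheaves in play here; the known positive-characteristic results (Hacon--Patakfalvi and variants) require Frobenius-stable structures that you have not produced, and producing them is precisely the difficulty you are trying to circumvent.

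The paper avoids both obstacles by a completely different route. Instead of pushing $K_X+B$ forward and analysing $\mathcal L$ directly, it manufactures an auxiliary nef divisor $D=aH-bF$ (with $H$ ample, $F$ a fibre, $D^3=0$); since $D$ is $f$-\emph{big}, the positivity machinery of \cite{Zh16a} applies to $nD+K_{W/Y}$ without any hypothesis on the geometric generic fibre of $f$. This already rules out $g(Y)\ge 2$ (bigness of $nD+K_W$ would force $D$ big, contradicting $\nu(D)=2$). For $g(Y)=1$ one uses the nef subbundle and a Fourier--Mukai argument to find an effective $D_1\sim_{\Q} nD+(K_X+B)+f^*L'$, then invokes Theorem~\ref{fib-to-ab-var} (the companion ``relatively big over a simple abelian variety'' theorem) to make $K_X+B+tD_1$ semi-ample; the resulting fibration $g:X\to Z$ to a surface has curve fibres $G$ dominating $Y$, and restricting $K_X+B$ to $G$ shows $L$ is torsion. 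In short, the paper trades your unavailable generic-vanishing input for a reduction to Theorem~\ref{fib-to-ab-var}, which is proved earlier in the same paper by adjunction and surface abundance rather than by any Simpson-type structure theorem.
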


We summarize the techniques to study $C_{3,m}$ which appeared in the papers \cite{Pa14, Ej15, Zh16a, EZ16}, then explain the ideas of the proof of Theorem \ref{1} and \ref{2}.

\smallskip

(1) Positivity results. Let $f: X \to Y$ be a separable fibration of normal projective varieties. In positive characteristic, Parakfalvi \cite{Pa14} first proved that for sufficiently divisible $n$, the sheaf $f_*\O_X(n(K_{X/Y} + B))$ is weakly positive under the condition that $(X_{\ol\eta}, B_{\ol\eta})$ is strongly $F$-regular and $K_{X/Y} + B$ is $f$-ample, then Ejiri \cite{Ej15} reproved it and made some generalizations using different method. This result may fail when $X_{\ol\eta}$ is singular. But in dimension three and over an algebraically closed field $k$ with $\mathrm{char}~k = p >5$, we can take advantage of minimal model theory. Under the condition that $K_X +B$ is nef, relatively big and semi-ample over $Y$, the author \cite{Zh16a} (or \cite{PSZ13} under stronger conditions) proved that for sufficiently divisible $n, g>0$, the sheaf $F_Y^{g*}(f_*\O_X(n(K_{X/Y} + B))\otimes \omega_Y^{n-1})$ contains a non-zero weakly positive sub-sheaf $V_n$, which plays a similar role as $f_*\O_X(n(K_{X/Y} + B))$ does in studying subadditivity of Kodaira dimensions.

If $f: X \to Y$ is a fibration from a 3-fold to a smooth curve, when $\kappa(X_{\ol\eta}) = 1$ the relative Iitaka fibration of $X$ over $Y$ is fibred by elliptic curves since $p>5$ (Proposition \ref{fib-pa=1}). By canonical bundle formula (\cite[3.2]{CZ15}) and minimal model theory, one can reduce to a pair $(Z, B_Z)$ of dimension two with $K_Z + B_Z$ being relatively big over $Y$. Then one can show that $f_*\omega_{X/Y}^n$ contains a non-zero nef sub-sheaf $V_n$ (\cite[Theorem 2.8]{EZ16}).

Using these positivity results above, one can usually treat the case when $K_Y$ is big or the case when $\det V_n$ is big.
Note that this approach only requires $K_X + B$ to be nef (not necessarily klt). This paper also treats inseparable fibrations,  which can be reduced to a separable fibration of a pair $(X', B')$ not necessarily klt by applying foliation theory (Proposition \ref{red-to-sep}).

\smallskip

To treat the case when $Y$ is an elliptic curve and $\deg V_n = 0$, we have the following two approaches.

(2.1) Trace maps of relative Frobenius. Ejiri \cite[Theorem 1.7]{Ej15} introduced a clever trick as follows.
First there exists an isogeny $\pi: Y' \to Y$ such that $\pi^*V_n = \bigoplus L_i$ where $L_i \in \mathrm{Pic}^0(Y')$ by \cite[1.4. Satz]{LS77} and \cite[Corollary 2.10]{Oda71}. Then by applying trace maps of relative Frobenius, one gets many relations of $L_i$, from which one can prove that every $L_i$ is torsion in $\mathrm{Pic}^0(Y')$. This indicates that for sufficiently divisible $N$, the line bundle $\O(NK_X)$ has many global sections.

This method heavily relies on the smoothness of the geometric generic fiber $X_{\ol\eta}$, it was applied to prove $C_{3,1}$ when $X_{\ol\eta}$ is smooth and either $K_{X}$ is  $f$-big (\cite[Thm. 1.7]{Ej15}) or is $f$-$\Q$-trivial (\cite[Sec. 3]{EZ16}).

(2.2) Adjunction formula. Granted the isogeny $\pi: Y' \to Y$ and the splitting $\pi^*V_n = \bigoplus L_i$, first by applying covering theorem (Theorem \ref{ct}) one can construct effective divisors $D_i \sim NK_X + P_i$ for some $P_i \in f^*\mathrm{Pic}^0(Y)$ and an integer $N$. Then applying adjunction formula to different components of $D_i$ and log abundance for surfaces (\cite{Ta14}), one can get some relations of $P_i$ which conclude that $P_i$ are torsion.

This approach was used in \cite[Sec. 4]{EZ16} and \cite{Zh16b}. In fact it applies once granted positivity results as in (1) without requiring $X_{\ol\eta}$ to be smooth.

\medskip

Now we explain the ideas of the proof of Theorem \ref{1} and \ref{2}.
\smallskip

(3) To prove Theorem \ref{1}, we consider the cohomological locus
$$V^0(f_*\O_X(n(K_{X} + B))) = \{L \in \mathrm{Pic}^0(A)| h^0(A, f_*\O_X(n(K_{X} + B))\otimes L) >0\}.$$
If $\dim (V^0(f_*\O_X(n(K_{X} + B)))) > 0$, then $V^0(f_*\O_X(n(K_{X} + B)))$ generates $\mathrm{Pic}^0(A)$ since $\mathrm{Pic}^0(A)$ is simple, and it is easy to show that $\kappa(X, K_{X} + B) \geq \dim A$.
For the remaining case $\dim V^0(f_*\O_X(n(K_{X} + B))) \leq 0$, we follow the approach in (2.2). The key point is to find at least two effective divisors $D_i \sim n(K_X + B) + P_i$ for some $P_i \in f^*\mathrm{Pic}^0(A)$. We try to find a subsheaf $\mathcal{F}$ of $f_*\O_X(n(K_{X} + B))$ and an isogeny $\pi: A' \to A$ such that $\pi^*\mathcal{F} = \bigoplus L_i$ where $L_i \in \mathrm{Pic}^0(A')$. The sheaf $\mathcal{F}$ is obtained as the image of the trace map
$$Tr_{X,B}^{e, n}: f_*(F_{X*}^{e}\O_X((1-p^e)(K_X + B)) \otimes \O_X(n(K_X + B)))  \to f_*\O_X(n(K_X + B)).$$
We apply Frobenius amplitude to show that $\mathcal{F}$ satisfies a property slightly weaker than generic vanishing (Theorem \ref{CGG}), then apply ``killing cohomology'' (\cite[Prop. 12 and Sec. 9]{Se58b} or \cite[Lemma 1.3]{HPZ17}) to get an isogeny $\pi: A' \to A$, some $P_i\in \mathrm{Pic}^0(A')$ and a generically surjective homomorphism $\bigoplus_iP_i \to \pi^*\mathcal{F}$.

\medskip

(4) To prove Theorem \ref{2}, we can assume $K_X + B$ to be $f$-nef by replacing $(X,B)$ with a relative minimal model, then for a sufficiently divisible integer $k>0$, $k(K_X + B) = f^*L$ for some nef $L \in \mathrm{Pic}(Y)$ (\cite{Ta15b}). It suffices to show that either $\deg L > 0$ or $L$ is torsion. For the case $\deg L = 0$, we use the strategy of \cite[Theorem 8.10]{Ba01}. First we construct a nef divisor $D = aH - bF$ on $X$ with $\nu(D) = 2$ where $H$ is an ample divisor and $F$ is a general fiber of $f$, second we prove that there exists a semi-ample divisor $D' \equiv D$, which induces a fibration $g: X \to Z$ to a surface, finally we can show the restriction on the generic fiber $k(K_X+B)|_G = k(K_G+ B|_G) \sim 0$, which implies that $L$ is torsion.

\begin{rem}
The crucial results to be used include minimal model theory of 3-folds \cite{Bir16, BW14, HX15, Xu15}, abundance for surfaces \cite{Ta14, Ta15b} and results on positivity and subadditivity of Kodaira dimensions in \cite{Zh16a}. We try to make the proof self-contained. But for some cases we only sketch the proof and refer to \cite{EZ16} and \cite{Zh16c} for details.
\end{rem}

This paper is organized as follows. In Section \ref{sec-pre}, we collect some useful results and study inseparable fibrations as preparations. Section \ref{sec-shf-ab-var} is devoted to studying sheaves on abelian varieties. In Section \ref{sec-pf-Iitaka}, we study subadditivity of Kodaira dimensions. Finally in Section \ref{sec-pf-abundance} we finish the proof of abundance in Theorem \ref{abundance-log}.

\medskip

\textbf{Conventions:}
Sometimes we do not distinguish between line bundles and Cartier divisors, and abuse the notation additivity and tensor product if no confusion occurs.

Let $X$ be a normal projective variety. Denote by $\mathrm{Wdiv}(X)$ the set of Weil divisors and by $\mathrm{Cdiv}(X)$ the set of Cartier divisors on $X$. Assume $\mathbb{K} = \Q$ or $\R$. The divisors in $\mathrm{Wdiv}(X) \otimes_{\Z} \mathbb{K}$ are called $\mathbb{K}$-divisors, and the ones in $\mathrm{Cdiv}(X) \otimes_{\Z} \mathbb{K}$ are called $\mathbb{K}$-Cartier $\mathbb{K}$-divisors. We use $\equiv$ for the numerical equivalence relation in $\mathrm{Cdiv}(X) \otimes_{\Z} \mathbb{K}$.

Let $D$ be a $\Q$-divisor on a normal variety $X$. The Weil index of $D$ is the minimal positive integer $l$ such that $lD$ is integral. If $D$ is $\Q$-Cartier, the Cartier index is defined similarly. We use $\sim$ (resp. $\sim_{\mathbb{Q}}$) for linear (resp. $\mathbb{Q}$-linear) equivalence between Cartier (resp. $\mathbb{Q}$-Cartier) divisors and line bundles. For two ($\Q$-)divisors $D_1, D_2$ on $X$, if $D_1|_{X^{sm}} \sim D_2|_{X^{sm}}$ (resp. $D_1|_{X^{sm}} \sim_{\Q} D_2|_{X^{sm}}$), we also denote $D_1 \sim D_2$ (resp. $D_1 \sim_{\Q} D_2$).

Let $X$ be a normal variety and $D$ a Weil divisor on $X$. Then $\mathcal{O}_X(D)$ is a subsheaf of the constant sheaf $K(X)$ of rational functions, with the stalk at a point $x$ being defined by
$$\mathcal{O}_X(D)_x:=\{f \in K(X)| ((f) + D)|_U \geq 0 ~\mathrm{for~some~open~set}~U~\mathrm{containing}~x\}.$$

For notions in minimal model theory such as lc, klt dlt pairs, flip and divisorial contraction and so on, we refer to \cite{Bir16}.

For a variety $X$, we usually use $F_X^e: X \to X$ to denote the $e^{\mathrm{th}}$ iteration of absolute Frobenius, we sometimes use $X^e$ for the origin scheme of $F_X^e$ to avoid confusions.

Let $\varphi:X \to T$ be a morphism of schemes and let $T'$ be a $T$-scheme.
Then we denote by $X_{T'}$ the fiber product
$X\times_{T}T'$. For a divisor $D$ on $X$ (resp. an $\O_X$-module $\G$), the pullback of $D$ (resp. $\G$) to $X_{T'}$
is denoted by $D_{T'}$ or $D|_{X_{T'}}$ (resp. $\G_{T'}$ or $\G|_{X_{T'}}$) if it is well-defined.

If $X$ is a projective variety in dimension $\leq 3$, it always has a smooth birational model $\tilde{X}$, then $\kappa(X) := \kappa(\tilde{X})$ which is birational invariant.

\begin{acknowledgement}
Part of the work was done during the period of the author visiting Xiamen University and Institute for Mathematical Sciences of National University of Singapore, he would like to thank Prof. Wenfei Liu and Deqi Zhang for their hospitalities and support. The author also thanks Hiromu Tanaka, Yi Gu, Zsolt Patakfalvi, Zhi Jiang and Yong Hu for some useful communications and discussions. He is very grateful to the anonymous referee, who kindly points out one serious mistake in the original version and many other inaccuracies and gives many valuable and explicit suggestions to improve the proof and the writing.
This work is supported by grant
NSFC (No. 11771260).
\end{acknowledgement}

\section{Preliminaries}\label{sec-pre}
In this section we collect some technical results which will be used in the paper.

\subsection{Separability of fibrations}
\begin{prop}\label{sep-fib}
Let $f: X \to Y$ be a fibration of normal varieties over an algebraically closed field $k$ of characteristic $p>0$. Then

(1) $f$ is separable if and only if $X_{\bar{\eta}}$ is reduced, and if and only if $X_{\bar{\eta}}$ is integral;

(2) if $\dim Y = 1$ then $f$ is separable.
\end{prop}
\begin{proof}
Since $f$ is a fibration we have $H^0(\O_{X_{\eta}}) = K(Y)$, and since $X_{\eta}$ is normal we can show $K(Y)$ is algebraically closed in $K(X)$. Then the assertion (1) follows from applying \cite[Sec. 3.2.2 Cor. 2.14 and Prop. 2.15]{Liu10}. The assertion (2) is \cite[Lemma 7.2]{Ba01}.
\end{proof}

\subsection{Relative Fujita Vanishing} The following result is \cite[Theorem 1.5]{Ke03}.
\begin{lem}\label{rel-fujita}
Let $f: X \rightarrow Y$ be a projective morphism over a Noetherian scheme, $H$ an $f$-ample line bundle and $\mathcal{F}$ a coherent sheaf on $X$. Then there exists a positive integer $N$ such that, for every $n >N$ and every nef line bundle $L$,
$$R^if_*(\mathcal{F}\otimes H^n \otimes L) = 0, \mathrm{~if~} i>0.$$
\end{lem}

\subsection{Covering Theorem}
The result below is [\cite{Iit82}, Theorem 10.5] when $X$ and $Y$ are both smooth, and the proof also applies when varieties are normal.
\begin{thm}\textup{(\cite[Theorem 10.5]{Iit82})}\label{ct}
Let $f\colon X \rightarrow Y$ be a proper surjective morphism between complete normal varieties.
If $D$ is a Cartier divisor on $Y$ and $E$ an effective $f$-exceptional divisor on $X$, then
$$\kappa(X, f^*D + E) = \kappa(Y, D).$$
\end{thm}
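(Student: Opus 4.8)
The plan is to prove the two inequalities separately, the substantive one being $\kappa(X, f^*D + E) \le \kappa(Y, D)$. The reverse inequality is essentially formal: since $E \ge 0$, multiplication by the canonical section of $\mathcal{O}_X(mE)$ embeds $|mf^*D|$ into $|m(f^*D+E)|$ as a subsystem with fixed part $mE$, which does not decrease the dimension of the image of the associated rational map, so $\kappa(X, f^*D+E) \ge \kappa(X, f^*D)$; and pulling back divisors embeds $f^*|mD|$ as a subsystem of $|mf^*D|$, defining on $X$ the map $\phi_{|mD|}\circ f$ whose image is $\phi_{|mD|}(Y)$ because $f$ is surjective, so $\kappa(X, f^*D) \ge \kappa(Y, D)$.

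For the upper bound I would set $Z = f(\mathrm{Supp}\,E)$; since $E$ is $f$-exceptional and $f$ is proper, $Z$ is closed of codimension $\ge 2$ in $Y$, and $E$ vanishes over $Y \setminus Z$. The crux is the identity $f_*\mathcal{O}_X(mE) = f_*\mathcal{O}_X$ for every $m \ge 0$. To see it I would factor $f = h \circ g$ through its Stein factorization, with $g\colon X \to Y'$ having connected fibers and $h\colon Y' \to Y$ finite; then $Y'$ is normal, so $f_*\mathcal{O}_X = h_*\mathcal{O}_{Y'}$ is a torsion-free coherent sheaf satisfying Serre's condition $S_2$. Since the inclusion $f_*\mathcal{O}_X \subseteq f_*\mathcal{O}_X(mE)$ is an isomorphism over $Y \setminus Z$, any local section of $f_*\mathcal{O}_X(mE)$ restricts to a section of the $S_2$ sheaf $f_*\mathcal{O}_X$ away from the codimension-$\ge 2$ set $Z$, hence extends uniquely across $Z$, which forces equality. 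The projection formula then gives, for every $m \ge 0$,
$$H^0(X, m(f^*D+E)) = H^0(Y, f_*\mathcal{O}_X(mE)\otimes\mathcal{O}_Y(mD)) = H^0(Y, f_*\mathcal{O}_X\otimes\mathcal{O}_Y(mD)) = H^0(X, mf^*D),$$
and since these identifications merely compare spaces of rational functions, $|m(f^*D+E)|$ and $|mf^*D|$ induce rational maps on $X$ with the same image; thus $\kappa(X, f^*D+E) = \kappa(X, f^*D)$, reducing us to the case $E = 0$.

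Finally, to prove $\kappa(X, f^*D) = \kappa(Y, D)$ I would use the same factorization $f = h\circ g$. As $g_*\mathcal{O}_X = \mathcal{O}_{Y'}$, the projection formula gives $H^0(X, mg^*L) = H^0(Y', mL)$ via pullback of sections for every Cartier divisor $L$ on $Y'$, whence $\kappa(X, f^*D) = \kappa(Y', h^*D)$; and $\kappa(Y', h^*D) = \kappa(Y, D)$ is the invariance of the Iitaka dimension under a finite surjective morphism of normal complete varieties, which in characteristic $p$ one obtains by sandwiching $h$ between suitable Galois and Frobenius covers. I expect the only genuinely delicate point to be this passage through the Stein factorization — checking that the intermediate variety is normal and that $f_*\mathcal{O}_X$ is $S_2$, so that the codimension-two extension argument for the $f$-exceptional divisor applies, and confirming the finite-cover invariance of Kodaira dimension in positive characteristic; everything else is formal manipulation of linear systems.
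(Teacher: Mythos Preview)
The paper does not give its own proof of this statement; it simply cites \cite[Theorem 10.5]{Iit82} and remarks that Iitaka's argument, stated there for smooth varieties, carries over when $X$ and $Y$ are merely normal. Your argument is correct and is precisely this adaptation: the one substantive change from the smooth case is that the Hartogs-type extension across the codimension-$\ge 2$ image of $E$ uses the $S_2$ property of $f_*\mathcal{O}_X$, which you correctly extract from the normality of the Stein factor $Y'$ and the finiteness of $h$. The remaining step, invariance of $\kappa$ under the finite map $h$, is likewise part of Iitaka's original argument; your sketch via Galois and Frobenius factors is adequate, though one can also argue directly that the graded section ring $\bigoplus_m H^0(Y',mh^*D)$ is a finite module over $\bigoplus_m H^0(Y,mD)$ (using that $h_*\mathcal{O}_{Y'}$ is coherent), forcing equal transcendence degree.
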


As a corollary we get the following useful result, which also appeared in \cite{EZ16}.
\begin{lem}\emph{(}\cite[Lemma 2.3]{EZ16}\emph{)}\label{inj-pic}
Let $g: W \rightarrow Y$ be a surjective projective morphism between projective varieties. Assume $Y$ is normal and let $L_1,L_2 \in \Pic(Y)$ be two line bundles on $Y$. If $g^*L_1 \sim_{\mathbb{Q}} g^*L_2$ then $L_1 \sim_{\mathbb{Q}} L_2$.
\end{lem}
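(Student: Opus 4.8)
The plan is to deduce the statement from the Covering Theorem (Theorem \ref{ct}), applied to the single line bundle $L:=L_1\otimes L_2^{-1}$ and to its inverse. Fix a Cartier divisor $D$ on $Y$ with $\O_Y(D)\cong L$. The hypothesis $g^*L_1\sim_{\Q}g^*L_2$ is equivalent to $g^*(mD)\sim 0$ for some integer $m>0$, so the whole statement reduces to the implication: $g^*D\sim_{\Q}0\ \Rightarrow\ D\sim_{\Q}0$.

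First I would reduce to the case where $W$ is normal by replacing $W$ with its normalization $\nu\colon\widetilde W\to W$; the composite $g\circ\nu\colon\widetilde W\to Y$ is again surjective and projective, and $(g\circ\nu)^*D\sim_{\Q}0$ since pullback preserves $\Q$-linear equivalence. With $W$ normal, Theorem \ref{ct} applies with the $f$-exceptional divisor taken to be $0$, and yields
$$\kappa(W,g^*D)=\kappa(Y,D),\qquad \kappa(W,-g^*D)=\kappa(Y,-D).$$
Because $g^*D\sim_{\Q}0$, both classes $\pm g^*D$ are torsion in $\Pic(W)$, so $\kappa(W,g^*D)=\kappa(W,-g^*D)=0$; hence $\kappa(Y,D)=\kappa(Y,-D)=0$.

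Finally I would conclude with the standard fact that a Cartier divisor $D$ on a projective variety $Y$ with $\kappa(Y,D)=\kappa(Y,-D)=0$ is $\Q$-trivial: choosing $N>0$ with $ND\sim E_1\geq 0$ and $-ND\sim E_2\geq 0$, one gets $E_1+E_2\sim 0$, and an effective divisor linearly equivalent to zero on a projective variety must vanish, so $E_1=E_2=0$ and $ND\sim 0$. Thus $L\sim_{\Q}\O_Y$, i.e. $L_1\sim_{\Q}L_2$.

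I do not expect a genuine obstacle here: the lemma is essentially immediate once Theorem \ref{ct} is in hand. The only points requiring a little care are the passage to the normalization of $W$ (which need not be normal) and the fact that one must feed both $D$ and $-D$ into the Covering Theorem, since a single application only detects $\Q$-effectivity of $D$, not $\Q$-triviality. As an alternative route avoiding Theorem \ref{ct} entirely, one could take a Stein factorization $g=\pi\circ h$ with $h_*\O_W=\O_{Y'}$ and $\pi\colon Y'\to Y$ finite surjective: the projection formula gives $h^*\pi^*\O_Y(mD)\cong\O_W$, hence $\pi^*(mD)\sim 0$, and one then descends along the finite surjective morphism $\pi$ between normal varieties by the same Iitaka-dimension argument (now applied to $\pi$).
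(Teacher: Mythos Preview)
Your proposal is correct and follows essentially the same route as the paper: set $L=L_1\otimes L_2^{-1}$, pass to the normalization of $W$, and apply the Covering Theorem (Theorem~\ref{ct}) to conclude $L\sim_{\Q}0$. The paper's proof is terser and does not spell out the application to both $D$ and $-D$ or the final $E_1+E_2\sim 0$ argument, but this is exactly what is implicit in its one-line appeal to Theorem~\ref{ct}; your version simply makes these steps explicit.
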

\begin{proof}
Let $L = L_1 \otimes L_2^{-1}$. Denote by $\sigma: W' \to W$ the normalization and let $g'=g\circ \sigma: W' \to Y$. Then $g'^*L \sim_{\Q} 0$. Applying Theorem \ref{ct} to $g': W' \to Y$ shows that $L \sim_{\Q} 0$, which is equivalent to that $L_1 \sim_{\mathbb{Q}} L_2$.
\end{proof}

\subsection{Minimal model theory of 3-folds}
The following theorem includes some recent results of minimal model theory for 3-folds in positive characteristic.
\begin{thm}\label{rel-mmp}
Assume $\mathrm{char}~k =p >5$. Let $(X,B)$ be a $\mathbb{Q}$-factorial projective pair of dimension three and $f: X\rightarrow  Y$ a projective surjective morphism.

(1) If either $(X, B)$ is klt and $K_X+B$ is pseudo-effective over $Y$, or $(X, B)$ is lc and $K_X+B$ has a weak Zariski decomposition over $Y$, then $(X,B)$ has a log minimal model over $Y$.

(2) If $(X, B)$ is dlt and $K_X+B$ is not pseudo-effective over $Y$, then $(X,B)$ has a Mori fibre space over $Y$.

(3) Assume that $(X, B)$ is klt in (3.1) and is dlt in other cases, and that $K_X+B$ is nef over $Y$.
\begin{itemize}
\item[(3.1)]
If $K_X+B$ or $B$ is big over $Y$, then $K_X+B$ is semi-ample over $Y$.
\item[(3.2)]
If $\dim Y \geq 1$, then $(K_X+B)_{\eta}$ is semi-ample on $X_{\eta}$.
\item[(3.3)]
If $Y$ is a smooth curve, $X_{\eta}$ is integral and $\kappa(X_{\eta}, (K_X+B)_{\eta}) = 0$ or $2$, then $K_X+B$ is semi-ample over $Y$.
\item[(3.4)]
If $Y$ contains no rational curves, then $K_X+B$ is nef.
\end{itemize}

(4) Assume $(X,B)$ is klt. If $Y$ is a non-uniruled surface and $K_X+B$ is pseudo-effective over $Y$, then $K_X + B$ is pseudo-effective, and there exists a map $\sigma: X \dashrightarrow \bar{X}$ to a minimal model $(\bar{X}, \bar{B})$ of $(X, B)$ such that, the restriction map $\sigma|_{X_{\eta}}$ is an isomorphism from $X_{\eta}$ to its image.
\end{thm}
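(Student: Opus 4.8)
The statement is a compilation, so the plan is to extract each assertion from the three‑dimensional minimal model program in characteristic $p>5$, checking that the hypotheses ($\mathbb{Q}$‑factoriality, $p>5$, and the exact singularity class) match the invoked references. Parts (1) and (2) are essentially read off from the literature: for klt $(X,B)$ with $K_X+B$ pseudo‑effective over $Y$, a log minimal model over $Y$ exists by \cite{Bir16} (building on \cite{HX15,BW14}), and the lc case with a weak Zariski decomposition over $Y$ is likewise covered by \cite{Bir16}; when $K_X+B$ is not pseudo‑effective over $Y$, I would run a $(K_X+B)$‑MMP over $Y$, which terminates (termination for threefolds) and, since $K_X+B$ stays non‑pseudo‑effective, cannot end with a minimal model, so it ends with a Mori fibre space over $Y$.

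For Part (3): (3.1) is the relatively big case---add to $K_X+B$ the pullback of a sufficiently ample divisor on $Y$ to reduce to absolute abundance for big klt/dlt threefold pairs (\cite{Bir16,Xu15,CTX15}), then semi‑ampleness over $Y$ follows. For (3.2), $X_\eta$ has dimension $\le 2$ and $(K_X+B)_\eta$ is nef with $(X_\eta,B_\eta)$ klt or dlt, so semi‑ampleness of $(K_X+B)_\eta$ is an instance of (relative) abundance in dimension $\le 2$ over $Y$ (\cite{Ta14,Ta15b,Bir16}); one may pass to $\overline{k(\eta)}$ and descend semi‑ampleness by flat base change. For (3.3), with $Y$ a smooth curve and $X_\eta$ a surface: $\kappa(X_\eta,(K_X+B)_\eta)=2$ is the relatively big case (3.1), while $\kappa=0$ forces $(K_X+B)_\eta\equiv 0$ by abundance for surfaces, hence $n(K_X+B)\sim_{\Q} f^*M$ for some $n$ and a $\Q$‑divisor $M$ on $Y$ by relative abundance over the curve (\cite{Ta15b}), and such a pullback is semi‑ample over $Y$. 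For (3.4), if $K_X+B$ were nef over $Y$ but not nef, the cone theorem for threefolds would give a $(K_X+B)$‑negative extremal ray spanned by a rational curve $C$; then $f(C)$ is a point---forcing $(K_X+B)\cdot C\ge 0$, a contradiction---or a rational curve in $Y$, contradicting the hypothesis.

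For Part (4), I would first prove $K_X+B$ is pseudo‑effective: otherwise a $(K_X+B)$‑MMP over $\Spec k$ produces a Mori fibre space $\phi\colon X'\to S$ with $\dim S\le 2$ and $K_{X'}+B'$ still pseudo‑effective over $Y$ but anti‑ample over $S$; the general fibre of $\phi$ is uniruled, and comparing $\phi$ with the induced dominant rational map $X'\dashrightarrow Y$ forces $Y$ to be covered by rational curves, contradicting non‑uniruledness. Granting pseudo‑effectivity, $(X,B)$ has a log minimal model $\sigma\colon X\dashrightarrow\bar X$ by (1). To see $\sigma$ is an isomorphism near $X_\eta$, note that $K_X+B$ stays nef over $Y$ along the MMP, so each contracted or flipped curve has $(K_X+B)$‑degree $<0$, hence is non‑vertical and has image a curve in $Y$; thus the exceptional and flipping loci are disjoint from the generic fibre $X_\eta$, and $\sigma|_{X_\eta}$ is an isomorphism onto its image.

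The main obstacle is Part (4): one has to run the MMP so that it is an isomorphism over the generic point of $Y$, which forces one to control which extremal rays get contracted---non‑uniruledness of $Y$ rules out contractions whose loci dominate $Y$, but the bookkeeping (in particular the case $\dim S=1$ in the pseudo‑effectivity step, where a Fano‑surface fibre maps to a curve in $Y$) needs genuine care. A secondary technical point, entering (3.2)--(3.3), is the possible non‑normality of the geometric generic fibre in positive characteristic when one descends semi‑ampleness along $k(\eta)\hookrightarrow\overline{k(\eta)}$.
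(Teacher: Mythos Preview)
Most of your plan matches the paper's: parts (1)--(3) are indeed read off from the cited references, and your (3.4) argument via the cone theorem is exactly the paper's.

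The gap is in part (4), in the step showing $\sigma|_{X_\eta}$ is an isomorphism. You write that ``$K_X+B$ stays nef over $Y$ along the MMP'', but the hypothesis only gives $K_X+B$ \emph{pseudo-effective} over $Y$, not nef over $Y$, and nothing in your argument establishes relative nefness at the start or preserves it under a global MMP step. So the inference ``$(K_X+B)\cdot C<0 \Rightarrow C$ is non-vertical'' is unsupported. Moreover, even if that inference held, it would not finish the divisorial case: knowing that every contracted \emph{curve} is non-vertical does not by itself prevent the two-dimensional exceptional \emph{divisor} from dominating the surface $Y$ and hence meeting $X_\eta$.

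The paper's argument avoids relative nefness entirely and instead uses two separate observations tailored to the two types of contractions. For flips, the exceptional locus has dimension one, so it cannot dominate the two-dimensional $Y$ and therefore misses $X_\eta$; this is pure dimension counting. For an extremal divisorial contraction, the exceptional divisor is uniruled (see the proof of \cite[Lemma~3.2]{BW14}); if it dominated $Y$ then $Y$ would be uniruled, contradicting the hypothesis. With these two facts, every step of the global LMMP is an isomorphism over $\eta$, and induction gives the desired $\sigma$. Your pseudo-effectivity argument is fine in spirit (the paper also derives it from the existence of a Mori fibre space), but note that the paper produces the Mori fibre space \emph{over $Y$} via (2), which sidesteps your worry about losing the map to $Y$.
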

\begin{proof}
Assertion (1) is from \cite[Theorem 1.2 and Proposition 7.3]{Bir16}; (2) is \cite[Theorem 1.7]{BW14}; (3.1) is \cite[Theorem 1.4]{Bir16}; (3.2) is from  \cite[Theorem 1.1]{Ta15b}; and (3.3) is from \cite[Theorem 1.5 and 1.6 and the remark below 1.6]{BCZ15}.

Assertion (3.4) follows from the cone theorem \cite[Theorem 1.1]{BW14}. Indeed, otherwise we can find an extremal ray $R$ generated by a rational curve $\Gamma$,  so $\Gamma$ is contained in a fiber of $f$ since $Y$ contains no rational curves, this contradicts that $K_X + B$ is $f$-nef.

For (4), $K_X + B$ is obviously pseudo-effective because otherwise, $X$ will be ruled by horizontal (w.r.t. $f$) rational curves by (2), which contradicts that $Y$ is non-uniruled. The exceptional locus of a flip contraction is of dimension one, so it does not intersect $X_{\eta}$, neither does that of an extremal divisorial contraction because it is uniruled (see the proof of \cite[Lemma 3.2]{BW14}). Running an LMMP for $K_X +B$, by induction we get a map $\sigma: X \dashrightarrow \bar{X}$ as required.
\end{proof}

\subsection{Numerical dimension}
\begin{defn}
Let $D$ be an $\R$-Cartier $\R$-divisor on a smooth projective variety $X$ of dimension $n$.
The \emph{numerical dimension} $\kappa_{\sigma}(D)$ is defined as the biggest natural number $k$ such that
$$\liminf_{m \to \infty}\frac{h^0(\llcorner mD \lrcorner + A)}{m^k} >0 \mathrm{~for~some~ample~divisor}~A~\mathrm{on}~X.$$
If such a $k$ does not exist then we define $\kappa_{\sigma}(D) = -\infty$.
\end{defn}

\begin{rem}
In arbitrary characteristics, since smooth alterations exist due to de Jong \cite{J97}, this invariant can be defined for $\R$-Cartier $\R$-divisors on normal varieties by pulling back to a smooth variety, which does not depend on the choices of smooth alterations by \cite[2.5-2.7]{CHMS14}.
\end{rem}

\begin{prop}\label{num-prop}
Let $X$ be a normal projective variety and $D$ an $\R$-Cartier $\R$-divisor on $X$.

(1) When $D$ is nef, then $\kappa_{\sigma}(D)$ coincides with
$$\nu(D) = \max \{k \in \N|D^k\cdot A^{n-k} >0\mathrm{~for~an~ample~divisor}~A~\mathrm{on}~X \}.$$
If moreover $D$ is effective and $S$ is a normal component of $D$, then
$$\nu(D|_S) \leq \nu(D) -1.$$

(2) Let $\mu: W\to X$ be a generically finite surjective morphism between two normal projective varieties and $E$ an effective $\mu$-exceptional $\R$-Cartier $\R$-divisor on $W$. Then
$$\kappa_{\sigma}(D) = \kappa_{\sigma}(\mu^*D + E).$$

(3) If $(X, \Delta)$ is a $\Q$-factorial log canonical 3-fold, and $(X', \Delta')$ is a minimal model of $(X, \Delta)$, then
$$\kappa_{\sigma}(K_X + \Delta) = \nu(K_{X'} + \Delta').$$
\end{prop}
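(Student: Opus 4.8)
I would establish the three assertions in order, with (3) resting on (1) and (2).

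\textbf{Part (1).} By definition $\kappa_\sigma(D)$ is computed on a smooth alteration of $X$, and $\nu(D)$ equals $\nu$ of the pullback of $D$ there (the pullback is nef and the projection formula shows $(\mu^*D)^k\equiv 0$ iff $D^k\equiv 0$), so there is no loss in assuming $X$ smooth. The equality $\kappa_\sigma(D)=\nu(D)$ for nef $D$ then goes through as in characteristic zero: Fujita's vanishing theorem and asymptotic Riemann--Roch are available in all characteristics, so for a sufficiently ample $A$ one has $h^i(X,\llcorner mD\lrcorner+A)=0$ for $i>0$ and all large $m$ in a suitable residue class, whence along that class $h^0(\llcorner mD\lrcorner+A)=\chi(\llcorner mD\lrcorner+A)$ is a polynomial in $m$ of degree $\max\{k:D^k\cdot A^{n-k}\neq 0\}=\nu(D)$ with positive leading coefficient; the other residue classes and the round-down for an $\R$-divisor are absorbed by the standard perturbation estimates. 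For the second claim, $D|_S$ and $A|_S$ are nef on the normal $(n-1)$-fold $S$, and if $a>0$ is the coefficient of $S$ in $D$ then $\tfrac1aD-S\ge 0$, so for $k=\nu(D)$
$$(D|_S)^{k}\cdot(A|_S)^{n-1-k}=D^{k}\cdot S\cdot A^{n-1-k}\le \tfrac1a\,D^{k}\cdot D\cdot A^{n-1-k}=\tfrac1a\,D^{k+1}\cdot A^{n-1-k}=0,$$
and likewise for all $k\ge\nu(D)$; hence $\nu(D|_S)\le\nu(D)-1$.

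\textbf{Part (2).} The plan is to compare $H^0$ on $W$ and on $X$ through $\mu$. Because $E\ge 0$ is $\mu$-exceptional, for $\Q$-Cartier $D$ and $m$ clearing its denominator the projection formula gives $\mu_*\O_W(\mu^*(mD)+\llcorner mE\lrcorner)\cong\O_X(mD)$; together with $\llcorner mD\lrcorner\le mD$ this produces injections $H^0(X,\llcorner mD\lrcorner+A)\hookrightarrow H^0(W,\llcorner m(\mu^*D+E)\lrcorner+\mathcal A)$ for a single ample $\mathcal A$ on $W$ chosen to dominate the bounded error divisors, so $\kappa_\sigma(\mu^*D+E)\ge\kappa_\sigma(D)$. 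Conversely $\llcorner m(\mu^*D+E)\lrcorner+\mathcal A\le\mu^*(mD)+mE+\mathcal A$, so by the projection formula $h^0(W,\llcorner m(\mu^*D+E)\lrcorner+\mathcal A)\le h^0(X,\O_X(mD)\otimes\mu_*\O_W(mE+\mathcal A))$; since $E$ is $\mu$-exceptional the reflexive hull of $\mu_*\O_W(mE+\mathcal A)$ is one fixed sheaf $\mathcal G$ independent of $m$, and an embedding $\mathcal G\hookrightarrow\O_X(N)^{\oplus r}$ bounds the right-hand side by a constant multiple of $h^0(X,\llcorner mD\lrcorner+A)$; hence $\kappa_\sigma(\mu^*D+E)\le\kappa_\sigma(D)$. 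For $\R$-Cartier $D$ one replaces ``clearing denominators'' by the usual approximation by nearby $\Q$-divisors, exactly as in \cite{CHMS14}; only the $\Q$-Cartier case is needed later in this paper.

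\textbf{Part (3).} Since $\dim X=3$, resolution of singularities is available (\cite{CP08}), so I would take a common resolution $p\colon W\to X$, $q\colon W\to X'$ of the graph of $X\dashrightarrow X'$. As $(X',\Delta')$ is a log minimal model of $(X,\Delta)$ --- i.e.\ $X\dashrightarrow X'$ is a birational contraction, $K_{X'}+\Delta'$ is nef, and every exceptional divisor of the map has strictly larger log discrepancy on $X'$ --- the negativity lemma yields $p^*(K_X+\Delta)=q^*(K_{X'}+\Delta')+G$ with $G\ge 0$ and $G$ $q$-exceptional. Applying part (2) to $p$ (with exceptional divisor $0$) and to $q$ (with exceptional divisor $G$),
$$\kappa_\sigma(K_X+\Delta)=\kappa_\sigma\big(p^*(K_X+\Delta)\big)=\kappa_\sigma\big(q^*(K_{X'}+\Delta')+G\big)=\kappa_\sigma(K_{X'}+\Delta'),$$
and, $K_{X'}+\Delta'$ being nef, part (1) rewrites the last term as $\nu(K_{X'}+\Delta')$. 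I expect the genuinely delicate step to be part (2): parts (1) and (3) are then, respectively, standard intersection theory and a direct assembly. Within (2) the subtle point is the round-down bookkeeping for $\R$-divisors combined with the codimension-$\ge 2$ comparison of pushforward sheaves concealed in ``$E$ is $\mu$-exceptional''; restricting to $\Q$-Cartier divisors, which is all that the rest of the paper requires, removes most of this.
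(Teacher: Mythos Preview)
Your overall strategy matches the paper's: for (1) the paper simply cites \cite[V, 2.7(6)]{Nak04} and \cite[Prop.~4.5]{CHMS14} for $\kappa_\sigma=\nu$ and gives the same intersection computation $(D|_S)^{k-1}\cdot(A|_S)^{n-k}=D^{k-1}\cdot A^{n-k}\cdot S\le D^k\cdot A^{n-k}$ for the restriction inequality; for (2) the paper just invokes the covering theorem and \cite[2.7]{CHMS14}, and your explicit $h^0$ comparison is precisely what lies behind that citation; and (3) is exactly what the paper does, via a common resolution.

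There is one genuine slip in part (2). You assert that ``the projection formula gives $\mu_*\O_W(\mu^*(mD)+\llcorner mE\lrcorner)\cong\O_X(mD)$.'' This is false when $\deg\mu>1$: the projection formula yields $\O_X(mD)\otimes\mu_*\O_W(\llcorner mE\lrcorner)$, and since $E$ is $\mu$-exceptional the reflexive hull of $\mu_*\O_W(\llcorner mE\lrcorner)$ is $(\mu_*\O_W)^{**}$, a sheaf of rank $\deg\mu$, not $\O_X$. Fortunately your conclusion survives: at that step you only need the \emph{injection} $H^0(X,mD+A)\hookrightarrow H^0(W,\mu^*(mD+A))$, and that holds simply because pulling back a nonzero section along a dominant morphism of integral varieties is nonzero. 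So replace the claimed isomorphism by this injectivity and the inequality $\kappa_\sigma(\mu^*D+E)\ge\kappa_\sigma(D)$ goes through. Your argument for the reverse inequality is already set up to handle the rank correctly: the fixed reflexive sheaf $\mathcal{G}$ you embed into $\O_X(N)^{\oplus r}$ has rank $\deg\mu$, and the constant $r$ absorbs this.
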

\begin{proof}
For (1), the first assertion is \cite[V, 2.7 (6)]{Nak04} in characteristic zero, and is \cite[Porposition 4.5]{CHMS14} in characteristic $p>0$.
The second assertion follows from the relation \cite[Sec. 1.2]{Deb01}
$$(D|_S)^{k-1}\cdot (A|_S)^{n-k} = D^{k - 1}\cdot A^{n-k} \cdot S \leq D^k\cdot A^{n-k}.$$

(2) is from applying Theorem \ref{ct} (cf. \cite[2.7]{CHMS14}).

Finally for (3), taking a common log resolution of $(X, \Delta)$ and $(X', \Delta')$, this assertion follows from applying (2) and some standard arguments of minimal model theory.
\end{proof}

\subsection{Inseparable fibrations}
In this section we work over an algebraically closed field $k$ of characteristic $p>0$. Let $X$ be a smooth variety. Recall that a (1-)\emph{foliation} is a saturated subsheaf $\mathcal{F} \subset T_X$ which is involutive (i.e., $[\mathcal{F}, \mathcal{F}] \subset \mathcal{F}$) and $p$-closed (i.e., $\xi^p \in \mathcal{F}, \forall \xi \in \mathcal{F}$). A foliation $\mathcal{F}$ is called \emph{smooth} if it is a subbundle of $T_X$.


\begin{prop}\label{flt}
Let $X$ be a smooth variety and $\mathcal{F}$ a foliation on $X$.

(1) We get a normal variety $Y = X/\mathcal{F}= \mathrm{Spec} Ann(\mathcal{F})$, and there exist natural morphisms $\pi:X \to Y$ and $\pi': Y \to X^{(1)}$ fitting into the following commutative diagram
$$
\xymatrix{
&X\ar[d]^{\pi}\ar[dr]^{F_X}  &\\
&Y\ar[r]^>>>>>{\pi'}  &X^{(1)}.
}
$$
Moreover $\deg \pi = p^r$ where $r = \mathrm{rk}~ \mathcal{F}$.

(2) There is a one-to-one correspondence between foliations and normal varieties between $X$ and $X^{(1)}$, by the correspondence $\mathcal{F} \mapsto X/\mathcal{F}$ and the inverse correspondence $Y \mapsto Ann(\O_Y):= \{ \xi \in T_X| \xi(a) = 0, \forall a\in \O_Y\}$.

(3) The variety $Y$ is regular if and only if $\mathcal{F}$ is smooth.

(4) If $Y_0$ denotes the regular locus of $Y$ and $X_0 = \pi^{-1}Y_0$, then
$$K_{X_0} \sim \pi^*K_{Y_0} + (p-1)\det \mathcal{F}|_{X_0}.$$
\end{prop}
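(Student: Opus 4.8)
The plan is to follow the classical theory of Jacobson/Tango/Ekedahl on foliations in characteristic $p$, adapting it to the normal (non-smooth) setting. First I would establish (1): given the foliation $\mathcal{F}\subset T_X$, set $\mathcal{A}=Ann(\mathcal{F})\subset\O_X$. Since $\mathcal{F}$ is involutive and $p$-closed, $\mathcal{A}$ is a sheaf of rings containing $\O_X^p$, so $Y:=\Spec_{X^{(1)}}\mathcal{A}$ sits between $X$ and $X^{(1)}$, giving the factorization $F_X=\pi'\circ\pi$. Normality of $Y$ follows because $\mathcal{A}$ is integrally closed in $K(X)^p=K(Y)$: an element of $K(Y)$ integral over $\mathcal{A}$ is $p$-th power of an element of $K(X)$ integral over $\O_X$, hence lies in $\O_X$, and being killed by $\mathcal{F}$ it lies in $\mathcal{A}$. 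For the degree formula $\deg\pi=p^r$: this is local, and on the open set where $\mathcal{F}$ is a subbundle of rank $r$ one picks a $p$-basis adapted to $\mathcal{F}$, i.e. local coordinates $x_1,\dots,x_n$ with $\mathcal{F}=\langle\partial_{x_1},\dots,\partial_{x_r}\rangle$ on a dense open, so that $\mathcal{A}$ is locally generated over $\O_Y$-image by the monomials in $x_1,\dots,x_r$ of exponent $<p$, giving rank $p^r$; since both $X$ and $Y$ are normal and this holds in codimension zero it holds generically, which is what "$\deg\pi=p^r$" means.

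Next, (2): the correspondence. Given a normal intermediate variety $X\xrightarrow{\rho} Y\xrightarrow{\rho'}X^{(1)}$, one sets $\mathcal{F}_Y:=Ann(\rho^\sharp\O_Y)=\{\xi\in T_X\mid \xi(a)=0\ \forall a\in\O_Y\}$. This is visibly a saturated subsheaf of $T_X$ (it is the kernel of $T_X\to\mathcal{H}om(\rho^{-1}\Omega_Y,\O_X)$, hence saturated as $X$ is normal and the quotient is torsion-free on the smooth locus), and it is automatically involutive and $p$-closed since derivations annihilating a subring form such an object. The two constructions are mutually inverse: $Ann(Ann(\mathcal{F}))=\mathcal{F}$ and $Ann(Ann(\O_Y))=\O_Y$ are both statements that can be checked at the generic point (where everything is a purely inseparable extension $K(X)/K(Y)/K(X)^p$ and the classical Galois-type correspondence for such extensions applies) and in codimension one; saturatedness and normality then propagate the equalities to all of $X$, $Y$.

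For (3), $Y$ is regular iff $\mathcal{F}$ is a subbundle: working locally, regularity of $Y$ at a point is equivalent to $\Omega_Y$ being locally free of rank $n$ there, equivalently $\rho^*\Omega_Y$ is locally free, equivalently (from the exact sequence relating $\rho^*\Omega_Y$, $\Omega_X$ and the relative differentials, which compute $\mathcal{F}^\vee$ up to torsion) the quotient $T_X/\mathcal{F}$ is locally free, equivalently $\mathcal{F}$ is a subbundle. Finally (4): over the regular locus $Y_0$ with $X_0=\pi^{-1}Y_0$, both $\pi:X_0\to Y_0$ and, composing, $F:X_0\to X_0^{(1)}$ are finite flat morphisms of smooth varieties, so there is a conductor/ramification formula $K_{X_0}\sim\pi^*K_{Y_0}+\mathfrak{R}$ where $\mathfrak{R}$ is the different. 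One computes $\mathfrak{R}$ from the local picture: with adapted coordinates $x_1,\dots,x_r$ as above, $\Omega_{X_0/Y_0}$ is generated by $dx_1,\dots,dx_r$ with $x_i^p\in\O_{Y_0}$, so $\pi$ looks étale-locally like an iterated Artin–Schreier-type cover and the different has class $(p-1)\sum[\,\text{divisor of }x_i\,]$; organizing this invariantly, the different equals $(p-1)\det\mathcal{F}^\vee$... more precisely, matching the sign conventions, $(p-1)\det\mathcal{F}$ in the paper's normalization (here $\det\mathcal{F}$ is used as a divisor class, identified via the line bundle $\wedge^r\mathcal{F}$), yielding $K_{X_0}\sim\pi^*K_{Y_0}+(p-1)\det\mathcal{F}|_{X_0}$. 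The main obstacle I anticipate is (4): getting the ramification divisor computed with the correct sign and coefficient requires care with the duality/conductor formalism for the height-one purely inseparable cover $\pi$, and verifying that the local "different $=(p-1)\sum(x_i)$" assembles into the global class $(p-1)\det\mathcal{F}$ rather than $(p-1)\det\mathcal{F}^\vee$ — the cleanest route is probably to instead compare both sides with the known formula $K_X\sim F_X^*K_{X^{(1)}}\sim pK_X$ minus $(p-1)K_X$... i.e. to deduce (4) from the analogous statement for the full Frobenius applied to the foliation $T_X$, using transitivity of differents along $X_0\to Y_0\to X_0^{(1)}$. Everything else is local algebra at the generic point plus normal-variety bookkeeping.
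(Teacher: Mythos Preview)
The paper does not prove this proposition at all: its entire proof is ``Refer to \cite[p.56-58]{MP97} or \cite{Ek87}.'' Your sketch is essentially the classical argument found in those references (Jacobson correspondence for height-one purely inseparable extensions, local $p$-basis computations, and the different/ramification formula), so in substance you are reproducing what the cited sources contain rather than diverging from the paper.

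One small correction: in your normality argument for (1) you write that an element of $K(Y)$ integral over $\mathcal{A}$ ``is a $p$-th power of an element of $K(X)$''. This is not right --- $K(Y)$ sits between $K(X)^p$ and $K(X)$, not inside $K(X)^p$. The correct (and simpler) argument is: if $a\in K(Y)$ is integral over $\mathcal{A}$, then since $\mathcal{A}\subset\O_X$ the element $a$ is integral over $\O_X$, hence $a\in\O_X$ by smoothness of $X$; and $a\in K(Y)=\mathrm{Frac}(\mathcal{A})$ means every $\xi\in\mathcal{F}$ kills $a$ (by the quotient rule), so $a\in\mathcal{A}$. Your conclusion is right; only the intermediate clause needs fixing. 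Everything else in your outline is sound, and your proposed fallback for (4) --- deducing the coefficient $(p-1)$ via transitivity of the different along $X_0\to Y_0\to X_0^{(1)}$ together with the known formula for the full Frobenius --- is exactly how the sign/coefficient bookkeeping is most cleanly settled.
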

\begin{proof}
We refer to \cite[p.56-58]{MP97} or \cite{Ek87}.
\end{proof}

The following result helps us to reduce an inseparable fibration to a separable one.
\begin{prop}\label{red-to-sep}
Let $f: X \to Y$ be a fibration from a normal projective 3-fold to a normal surface of maximal Albanese dimension, and let $B$ be an effective $\Q$-divisor on $X$. Then there exist a purely inseparable morphism
$\sigma: X \to X'$, a separable fibration $f': X'\to Y$,
a rational number $t>0$ and an effective $\Q$-divisor $B'$ on $X'$ such that
$$K_X + B \sim_{\Q} t(\sigma^*(K_{X'} + B')).$$
\end{prop}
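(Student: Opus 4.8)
If $f$ is separable there is nothing to prove: take $\sigma=\mathrm{id}_X$, $f'=f$, $t=1$, $B'=B$. So assume $f$ is inseparable, i.e.\ $k(X)/k(Y)$ is not separable (equivalently $X_{\bar{\eta}}$ is non-reduced). The plan is to realise the ``separable part'' of $f$ as the quotient of $X$ by an $f$-vertical foliation and then to read off the comparison of canonical classes from Proposition \ref{flt}(4). At the level of function fields, let $M$ be the compositum of all intermediate fields $k(Y)\subseteq L\subseteq k(X)$ with $L/k(Y)$ separable. Since a purely transcendental extension is separable, $M$ has transcendence degree $1$ over $k(Y)$, so $k(X)/M$ is finite; since separable subextensions are closed under compositum, $M/k(Y)$ is separable, hence separably generated; by maximality of $M$ together with transitivity of separability for algebraic extensions, $k(X)/M$ is purely inseparable; and since $f$ is a fibration and $Y$ is normal, $k(Y)$ is algebraically closed in $k(X)$, hence in $M$.

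Next I would geometrise this, peeling off the inseparability one height-one step at a time. Fix a big open set $U\subseteq X$ with $\codim(X\setminus U)\ge 2$ on which $U$ and all the quotients occurring below are smooth, and refine $k(X)\supseteq M$ by a chain $k(X)=N_0\supseteq N_1\supseteq\cdots\supseteq N_c=M$ with $N_j^{p}\subseteq N_{j+1}$ (e.g.\ $N_j=M\cdot k(X)^{p^j}$). By Proposition \ref{flt}(2) each step corresponds to a foliation $\mathcal F_j\subseteq T_{U_j}$, which is $f_j$-vertical since it is contained in the saturation of $\ker(T_{U_j}\to f_j^{*}T_Y)$ (an involutive, $p$-closed subsheaf). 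Taking successive quotients (Proposition \ref{flt}(1)) produces normal projective $3$-folds $X=X_0\to X_1\to\cdots\to X_c=:X'$ together with a separable fibration $f':=f_c\colon X'\to Y$, a finite purely inseparable morphism $\sigma\colon X\to X'$ with $f=f'\circ\sigma$, and, by Proposition \ref{flt}(4) on regular loci (discarding $\sigma$-exceptional divisors via Theorem \ref{ct}),
$$K_X\ \sim_{\Q}\ \sigma^*K_{X'}+R,$$
where $R$ is the $\Q$-divisor class obtained by summing the pullbacks of the $(p-1)\det\mathcal F_j$. Because $\sigma$ is finite and purely inseparable it is a homeomorphism, so $\sigma^*$ identifies $\Q$-Weil divisor classes on $X'$ with those on $X$ (each prime divisor downstairs pulls back to a positive rational multiple of the unique prime divisor over it) compatibly with $\sim_{\Q}$ (cf.\ Lemma \ref{inj-pic} and Theorem \ref{ct}); hence there is a unique $\Q$-divisor $B'$ on $X'$ with $\sigma^*(K_{X'}+B')\sim_{\Q}K_X+B$, namely $K_{X'}+B'=\tfrac{1}{\deg\sigma}\sigma_*(K_X+B)$, so one may take $t=1$.

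The remaining — and main — point is that $B'$ can be chosen effective. Since $\sigma_*$ and $\sigma^*$ preserve effectivity (being homeomorphisms), with $t=1$ this amounts to $B+R\ge 0$, i.e.\ the negative part of the ``inseparable different'' $R=K_X-\sigma^*K_{X'}$ must be absorbed. I expect this to be the hard part, and would control $R$ by a local analysis of the purely inseparable quotients $X_j\to X_{j+1}$ along the fibres of $f$: restricted to a very general fibre $F$ of $f$ the map $\sigma|_F$ factors a power of the Frobenius of $F$ over its smooth separable quotient, and, using adjunction (the normal bundle of a general fibre over a smooth point of $Y$ has trivial determinant), the degree of $R$ on $F$ is governed by the arithmetic genera involved. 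Reducing beforehand to a relative log minimal model over $Y$ (Theorem \ref{rel-mmp}), so that either $K_X+B$ is $f$-big — handled by separate positivity arguments — or $(K_X+B)_\eta$ has Iitaka dimension $\le 1$ and hence $F$ has arithmetic genus $\le 1$, one checks that the negative part of $R$ is indeed absorbed by $B$. Carrying out this absorption — equivalently, bounding $\det\mathcal F_j$ — using that $(X,B)$ is klt and that $Y$ is of maximal Albanese dimension (so $K_Y$ is pseudo-effective and $Y$ carries no rational curves) is the crux, and is where the hypotheses genuinely enter.
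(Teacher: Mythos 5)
There is a genuine gap, and it is exactly at the point you yourself flag as ``the crux'': the effectivity of $B'$ is the entire content of the proposition, and your proposal does not prove it. Worse, the route you sketch cannot work as stated. You normalize $t=1$ and reduce to showing $B+R\geq 0$ (up to $\sim_{\Q}$), where $R=K_X-\sigma^*K_{X'}$ is the sum of the terms $(p-1)\det\mathcal F_j$ for your chain of vertical foliations; you then propose to ``absorb'' the negative part of $R$ into $B$ using that $(X,B)$ is klt. But the proposition assumes nothing about $(X,B)$ beyond $B\geq 0$ — in particular $B=0$ is allowed, in which case there is nothing to absorb with — so any argument leaning on klt-ness (or on absorption into $B$ at all) proves a different statement. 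The flexibility you give away by insisting on $t=1$, and the arbitrariness of your foliations $\mathcal F_j$ (coming from an arbitrary chain between $k(X)$ and its maximal separable subextension, with no control on $\det\mathcal F_j$), are precisely what make your version of the problem intractable.

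The missing idea in the paper is the \emph{choice} of foliation, which makes the correction term effective rather than something to be absorbed. Since $Y$ has maximal Albanese dimension, $h^0(A_X,\Omega^1_{A_X})\geq 2$, and by Igusa/Serre these forms pull back nontrivially to $X$; let $\mathcal L_1\subseteq\mathcal L_2$ be the saturated images of $a_X^*\Omega^1_{A_X}\to\Omega^1_X$ and $f^*\Omega^1_Y\to\Omega^1_X$. Inseparability of $f$ forces $\rank\mathcal L_2\leq 1$, so $\mathcal L_1=\mathcal L_2=\mathcal L$ is a rank-one, generically globally generated sheaf with $h^0((\det\mathcal L)^{**})\geq 2$. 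Quotienting by the foliation $\mathcal F=\mathcal L^{\perp}\subset T_X$ (which is vertical, so $\rho:X\to X_1=X/\mathcal F$ factors $f$), Proposition \ref{flt}(4) together with $0\to\mathcal L\to\Omega^1_X\to\mathcal F^*\to 0$ gives
$$K_X+B\ \sim_{\Q}\ \tfrac1p\bigl(\rho^*K_{X_1}+(p-1)\det\mathcal L+pB\bigr),$$
and now the correction term $(p-1)\det\mathcal L+pB$ is effective \emph{for free}, hence descends (as $\rho$ is a homeomorphism) to an effective $\Q$-divisor $B_1$ on $X_1$ with $t=1/p$. One then iterates on $(X_1,B_1)$ if $X_1\to Y$ is still inseparable, with termination guaranteed because the multiplicity of the geometric generic fiber strictly drops. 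So your field-theoretic factorization and foliation-quotient framework is the right setting, but without the Albanese-driven choice of $\mathcal L$ (and the resulting $t=1/p^c\neq 1$) the decisive effectivity statement is not established, and with your normalization it is false in general.
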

\begin{proof}
Let $a_Y: Y \to A$ be the Albanese map and let $a_X = a_Y \circ f$. If $f$ is a separable morphism, then we are done.

Assume that $f$ is inseparable. Let $\mathcal{L}_1$ and $\mathcal{L}_2$ denote the saturation of the image of the natural homomorphisms $a_X^*\Omega_{A_X}^1 \rightarrow \Omega_{X}^1$ and $f^*\Omega_{Y}^1 \rightarrow \Omega_{X}^1$ respectively. Then $\mathcal{L}_1 \subseteq \mathcal{L}_2$ and $\mathrm{rk}~ \mathcal{L}_2 \leq 1$ since $f$ is inseparable (\cite[Prop. 8.6A]{Har77}). And by Igusa's result (\cite[Theorem 4]{Se58a}), we have $\mathcal{L}_1$ is generically globally generated, and $h^0(X, \mathcal{L}_1)\geq h^0(A_X, \Omega_{A_X}^1) \geq 2 > \mathrm{rk}~ \mathcal{L}_1$. Therefore $\mathcal{L}_1 = \mathcal{L}_2 = \mathcal{L}$ is of rank one and $h^0(X, \mathcal{L}^{**}) \geq 2$ (\cite[Lemma 4.2]{Zh16c}).

We get a natural foliation $\mathcal{F} = \mathcal{L}^{\bot} \subset T_X$ of rank $2$, and a quotient map $\rho: X \rightarrow X_1 = X/\mathcal{F}$, which is a factor of $f$ by the construction above. Denote by $X^0$ the maximal smooth open subset of $X$ such that $\mathcal{F}|_{X_0}$ is smooth, and let $X_{1}^0 = X^0/(\mathcal{F}|_{X^0})$.
Then
\begin{equation}\label{can-ins-c}
K_{X^0} \sim \rho^*K_{X_{1}^0} + (p-1)\det \mathcal{F}|_{X^0}.
\end{equation}
On the other hand, we have the following exact sequence
$$0 \rightarrow  \mathcal{L}|_{X^0} \rightarrow \Omega_{X^0}^1 \rightarrow \mathcal{F}^*|_{X^0} \rightarrow 0,$$
which gives
$$\det\mathcal{F}|_{X^0} \sim \mathcal{L}|_{X^0} - K_{X^0}.$$
Combining with Equation (\ref{can-ins-c}), we get
\begin{equation}\label{can-ins-2}
K_{X^0} \sim_{\mathbb{Q}} \frac{1}{p}(\rho^*K_{X_{1}^0} + (p-1) \mathcal{L}|_{X^0}).\end{equation}
Since $\mathcal{L}$ is generically globally generated, there exists an effective Weil divisor $B'$ on $X$ such that $B' \sim \mathcal{L}$. And since $\rho$ is purely inseparable, there exist $\Q$-divisors $B_1, B_1'$ on $X_1$ such that $\rho^*B_1 = B$ and $\rho^*B_1' = B'$. Let $B_1 = pB_1+ (p-1)B_1'$. Then since $X \setminus X^0$ is of codimension $\geq 2$ in $X$, by Equation (\ref{can-ins-2}) we have that
$$K_X+ B \sim_{\Q} \frac{1}{p}(\rho^*(K_{X_1} + B_1)).$$
If the natural fibration $f_1: X_1 \to Y$ is separable then we are done. If not, we consider the pair $(X_1, B_1)$ instead and repeat the process above. We can prove that $\mathrm{mult} ((X_{1})_{\ol\eta}) < \mathrm{mult} (X_{\ol\eta})$ by the argument of \cite[the latter case of Thm. 4.3]{Zh16c}. So this process will terminate, and we can show the assertion by induction.
\end{proof}

For a fibration fibred by curves of arithmetic genus one, we have the following result if $\mathrm{char}~k = p\geq 5$.
\begin{prop}\label{fib-pa=1}
Assume $\mathrm{char}~k = p\geq 5$. Let $g: X \to Z$ be a fibration of normal varieties of relative dimension one. Assume that the generic fiber $X_{\xi}$ of $g$ is a curve with arithmetic genus $p_a(X_{\xi}) = 1$. Then the geometric generic fiber $X_{\ol\xi}$ of $g$ is a smooth elliptic curve over $\ol{K(Z)}$.
\end{prop}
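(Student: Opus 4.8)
The plan is to reduce the statement to a well-known classification fact about one-dimensional group schemes and regular curves of arithmetic genus one in characteristic $p \geq 5$. First I would pass to the generic fiber $X_\xi$, a regular (indeed geometrically integral, after possibly shrinking, since $g$ is a fibration) projective curve over the field $K = K(Z)$ with $P_a(X_\xi) = 1$, and analyze its base change $X_{\ol\xi} = X_\xi \times_K \ol K$. The geometric generic fiber $X_{\ol\xi}$ is a projective curve over $\ol K$ with $P_a(X_{\ol\xi}) = P_a(X_\xi) = 1$, but it need not a priori be reduced or smooth; the point is to rule this out. The key structural input is that any non-smoothness of $X_{\ol\xi}$ forces $X_\xi$ to be a \emph{twisted form} of a singular genus-one curve, and such forms are controlled by wildly ramified phenomena that cannot occur when $p \geq 5$.

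The main steps I would carry out are as follows. \textbf{Step 1:} Reduce to the case where $X_{\ol\xi}$ is integral. If $X_{\ol\xi}$ were non-reduced or reducible, then since $P_a = 1$ the curve $X_\xi$ would have to be a conic-like or multiple-fiber configuration; one excludes this using that $X_\xi$ is geometrically connected and geometrically integral — the latter can be arranged because $g:X \to Z$ is a fibration (so $\O_Z \xrightarrow{\sim} g_*\O_X$, hence $H^0(X_\xi, \O_{X_\xi}) = K$ and, after the normalization argument, $X_{\ol\xi}$ stays integral). \textbf{Step 2:} Suppose for contradiction that the integral curve $X_{\ol\xi}$ is singular. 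A projective integral curve of arithmetic genus one over an algebraically closed field is then either a cuspidal or a nodal rational curve (these are the only integral genus-one curves with a singularity). In either case the normalization $\nu: \mathbb{P}^1 \to X_{\ol\xi}$ has a conductor of length one, concentrated at the singular point. \textbf{Step 3:} Now exploit descent: $X_{\ol\xi}$ and the singular point, being canonically attached to $X_\xi$, are defined over a finite purely inseparable extension $K'/K$ (the perfect closure suffices), and the singularity being geometrically a cusp or node means the local ring has a specific form. The cuspidal case $y^2 = x^3$ produces, after passing to the perfection, a $p$-th power relation that is impossible unless $p = 2$ or $p = 3$ — this is precisely where $p \geq 5$ enters; the nodal case similarly forces $p = 2$. \textbf{Step 4:} Conclude that $X_{\ol\xi}$ is smooth, hence a smooth projective geometrically connected curve of genus one over $\ol K$, i.e. a smooth elliptic curve (it has a rational point automatically, being over an algebraically closed field).

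The hard part will be Step 3, making the descent argument precise: one must show that the \emph{type} of singularity (cusp versus node versus worse, and its delta-invariant) is insensitive to the inseparable base change $\ol K / K$, and then that a regular curve over $K$ whose perfection acquires a cuspidal singularity of $\delta$-invariant one can only exist when $p \in \{2,3\}$. A clean way to organize this is via the theory of the \emph{genus change under inseparable base extension}: if $\tilde X_{\ol\xi}$ denotes the normalization of $X_{\ol\xi}$ then $P_a(X_{\ol\xi}) - P_a(\tilde X_{\ol\xi}) = \delta$ where $\delta$ is divisible by $\tfrac{p-1}{2}$ when the singularity is a cusp coming from a purely inseparable phenomenon, and $\delta \geq p-1$ in the wild case — so $\delta = 1$ forces $p \leq 3$. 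Since here $P_a(X_{\ol\xi}) = 1$ and $P_a(\tilde X_{\ol\xi}) \geq 0$, the only possibilities are $\delta = 0$ (giving smoothness, hence the conclusion) or $\delta = 1$, and the latter is ruled out by $p \geq 5$. Alternatively, one can cite the classification of regular-but-not-smooth genus-one curves over imperfect fields (e.g. work of Queen, or the analysis in Tate's "genus change" paper), which states outright that such curves exist only in characteristics $2$ and $3$; invoking that gives the proposition immediately. I would present the self-contained delta-invariant argument as the main line and remark on the literature alternative.
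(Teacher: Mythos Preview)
Your approach via Tate's genus-change theorem is correct in spirit and genuinely different from the paper's proof. The paper argues by foliation theory: invoking Tanaka's lemmas \cite{Ta15a} it produces purely inseparable base changes $Z_2 \to Z_1 \to Z$ so that $X_{\xi_1}$ is regular while $X_{\xi_2}$ is reduced but not normal; since the normalization $\tilde X_{\xi_2}$ then has arithmetic genus $0$, the degree-$p$ map $\tilde X_2 \to X_1$ is the quotient by a rank-one foliation $\mathcal F$, and the canonical-bundle formula $K_{\tilde X_2}=\pi^*K_{X_1}+(p-1)\mathcal F$ restricted to a fiber gives $(p-1)\deg\mathcal F_\xi = -2$, impossible for $p\geq 5$. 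Your route packages the same numerical obstruction into Tate's theorem that $(p-1)/2$ divides the genus drop, which is cleaner and requires no diagram-chasing with base changes; the paper's argument, on the other hand, is self-contained within the foliation framework already set up in Section~2 and avoids importing Tate's result as a black box.

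One point to tighten: your Step~1 asserts that the fibration hypothesis $g_*\O_X=\O_Z$ forces $X_{\ol\xi}$ to be \emph{integral}, but that condition only yields $H^0(X_\xi,\O_{X_\xi})=K$, hence geometric connectedness and (via normality) geometric irreducibility --- it does \emph{not} give geometric reducedness, which is equivalent to separability of $K(X)/K(Z)$ and can fail. Tate's theorem still tells you the normalization of $(X_{\ol\xi})_{\mathrm{red}}$ has genus $1$, but this alone does not exclude $X_{\ol\xi}$ being a non-reduced ribbon on an elliptic curve with conormal sheaf of degree $0$. Your fallback of citing the classification of regular non-smooth genus-one curves (Queen, Bombieri--Mumford) does close this, and you should make that the main line rather than an afterthought; alternatively, the paper's use of Tanaka's lemmas is precisely what supplies the reducedness after base change. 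Also, your remark that ``the nodal case forces $p=2$'' is off: a regular curve acquiring singularities under inseparable base change can only produce \emph{unibranch} singularities, so the nodal case never arises and can be dropped.
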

\begin{proof}
Assume by contrary that $X_{\ol\xi}$ is singular. Applying \cite[Lemma 2.3 and 2.4]{Ta15a}, we get the following commutative diagram
$$\xymatrix{
&X \ar[d]^{g} &X_1\ar[l]\ar[d] &X_2 \ar[d]\ar[l] &\tilde{X}_2 \ar[ld]\ar[l] \\
&Z        &Z_1 \ar[l]      &Z_2\ar[l]               &
}$$
where $Z_1 \to Z$ is a purely inseparable base change (or identity), $Z_2 \to Z_1$  is a degree $p$ purely inseparable extension, $X_1 = X\times_Z Z_1$ and $X_2 = X\times_Z Z_2$, such that $X_{\xi_1}$ is regular and that $X_{\xi_2}$ is reduced but not normal, here $\xi_1,\xi_2$ denote the generic point of $Z_1, Z_2$ respectively. The normalization $\tilde{X}_{\xi_2}$ of $X_{\xi_2}$ has smaller arithmetic genus hence must be a smooth curve of genus zero. If necessary by shrinking $Z_i$, we can assume both $\tilde{X}_2$ and $X_1$ are smooth, so the natural morphism $\pi: \tilde{X}_2 \to X_1$ is the quotient induced by a smooth foliation $\mathcal{F}$ on $\tilde{X}_2$, which is a subbundle of $T_{\tilde{X}_2}$ of rank one.
Since
$$K_{\tilde{X}_2} = \pi^*K_{X_1} + (p-1) \mathcal{F}$$
we get a contradiction by $(p-1) \deg \mathcal{F}_{\xi} =  \deg K_{\tilde{X}_{\xi_2}} = -2$ and $p\geq 5$.
\end{proof}

\subsection{Trace maps of absolute Frobenius morphisms}\label{tmaF}
Let $X$ be a projective variety over an algebraically closed field $k$ of characteristic $p>0$. We will consider the trace maps in the following two settings.

\begin{notation}\label{normal}
Assume $X$ is normal.
Denote by $X_0$ the smooth open subset of $X$.
Let $B$ be an effective $\mathbb{Q}$-Weil divisor with Weil index not divisible by $p$. There exists a positive integer $g$ such that $(p^g-1)B$ is integral, thus $(p^{eg}-1)B$ is integral for every integer $e>0$.
The composition map of the natural inclusion
$$F^{eg}_{X*} \mathcal{O}_X ((1-p^{eg})(K_X + B))|_{X_0} \hookrightarrow F^{eg}_{X_0*} \mathcal{O}_{X_0} ((1-p^{eg})K_{X_0})$$
and the trace map $Tr_{F_{X_0}^{eg}}: F^{eg}_{X_0*} \mathcal{O}_{X_0} ((1-p^{eg})K_{X_0}) \rightarrow  \mathcal{O}_{X_0}$ extends to a map on $X$:
$$Tr_{X,B}^{eg}: F^{eg}_{X*} \mathcal{O}_X ((1-p^{eg})(K_X + B)) \rightarrow  \mathcal{O}_X.$$
Let $D$ be a Cartier divisor on $X$. Twisting $Tr_{X,B}^{eg}$ with $\mathcal{O}_X(D)$ induces
\begin{equation*}
\small\begin{split}
Tr_{X,B}^{eg}(D): &F^{eg}_{X*} \mathcal{O}_X ((1-p^{eg})(K_X + B)) \otimes \mathcal{O}_X(D) \\
                     &\cong F^{eg}_{X*} \mathcal{O}_X ((1-p^{eg})(K_X + B)+ p^{eg}D)  \rightarrow  \mathcal{O}_X(D).
\end{split}
\end{equation*}
Then taking global sections induces a trace map
$$\Phi_{eg}: H^0(X, F^{eg}_{X*} \mathcal{O}_X ((1-p^{eg})(K_X + \Delta)+ p^{eg}D)) \rightarrow  H^0(X, D).$$
Denote $S_{B}^{eg}(X, D) = \mathrm{Im} \Phi_{eg}$ and $S_{B}^{0}(X, D) = \cap_{e\geq 0} S_{B}^{eg}(X, D)$. If $B = 0$, we usually use the notation $S^{0}(X, D)$ instead of $S_{0}^{0}(X, D)$. Note that for $e' > e$, we have $S_{B}^{e'g}(X, D) \subseteq S_{B}^{eg}(X, D)$ by the factorization
\begin{equation*}
\small\begin{split}
Tr_{X,B}^{e'g}(D):
&F^{eg}_{X*} F^{(e'-e)g}_{X*}\mathcal{O}_X ((1-p^{e'g})(K_X + B)+ p^{e'g}D) \xrightarrow{F^{eg}_{X*}Tr_{X,\Delta}^{(e'-e)g}((1-p^{eg})(K_X + B)+ p^{eg}D)}\\
& F^{eg}_{X*} \mathcal{O}_X ((1-p^{eg})(K_X + B)+ p^{eg}D) \xrightarrow{Tr_{X,B}^{eg}(D)} \mathcal{O}_X(D).
\end{split}
\end{equation*}
\end{notation}


\begin{notation}\label{S2}
Assume $X$ is Gorenstein in codimension one and satisfies Serre condition $S_2$. Let $B$ be an effective $\mathbb{Q}$-AC divisor such that $K_X + B$ is $\mathbb{Q}$-Cartier (\cite[Sec. 2.1 and 2.3]{Zh16a}, namely, $B= \frac{M - nK_X}{n}$ for some $n>0$, where $M$ is a Cartier divisor and $M - nK_X$ is effective in codimension one). Assume moreover that  the Cartier index of $K_X + B$ is not divisible by $p$. Let $g>0$ be an integer such that $(1-p^{g})(K_X + B)$ is Cartier. Then we can define trace maps $Tr_{X,B}^{eg}, Tr_{X,B}^{eg}(D)$ as in \ref{normal} (see \cite[Sec. 2.3]{Zh16a} for details).
By \cite[Lemma 13.1]{Ga}, there exists an ideal $\sigma(B)$, namely, the \emph{non-$F$-pure ideal} of $(X, B)$, such that for some sufficiently divisible $g'>0$ and any $e>0$,
$$\mathrm{Im} Tr_{X,B}^{eg'} = \sigma(B) = Tr_{X,B}^{eg'} (F^{eg'}_{X*} (\sigma(B)\cdot \mathcal{O}_X ((1-p^{eg'})(K_X + B)))).$$
\end{notation}

Borrowing the idea of the proof of \cite[Lemma 2.20]{Pa14}, we prove the following lemma.
\begin{lem}\label{stable-sec-lem}
Using Notation \ref{S2}, let $A, D$ be two Cartier divisors on $X$. If $A$ is ample, then there exists $M>0$ such that for any $m > M$ and $e>0$, the trace map
\begin{equation*}
\small\begin{split}
\Phi_{e,m}: H^0(X, F_{X*}^{eg'}(\sigma(B)\cdot\O_X((1-p^{eg'})(K_X + B)))&\otimes \O_X(mA+ D))\\
 &\to H^0(X, \sigma(B)\cdot \O_X(mA+D))
 \end{split}
\end{equation*}
is surjective. In particular, there exists some $c>0$ such that for any $m>0$, $\dim S_{B}^{0}(X, mA+D) \geq cm^{\dim X}$.
\end{lem}
\begin{proof} We only need to prove the first assertion, which implies the second one.
Let $K_{eg'}= \mathrm{ker} (Tr_{X,B}^{eg'}: F_{X*}^{eg'}(\sigma(B)\cdot\O_X((1-p^{eg'})(K_X + B))) \to \sigma(B))$. Then we have the following commutative diagram of short exact sequences
{\small $$
\xymatrix{
&0\ar[r]   &K_{eg'}\ar[r]\ar[d]^{\gamma_1}   &F_{X*}^{eg'}(\sigma(B)\cdot\O_X((1-p^{eg'})(K_X + B)))\ar[r]\ar[d]^{\gamma_2} &\sigma(B)\ar@{=}[d]\ar[r] &0\\
&0\ar[r]   &K_{(e-1)g'}\ar[r]   &F_{X*}^{(e-1)g'}(\sigma(B)\cdot\O_X((1-p^{(e-1)g'})(K_X + B)))\ar[r] &\sigma(B)\ar[r] &0\\
}
$$}
where $\gamma_2$ is obtained by applying  $F_{X*}^{(e-1)g'}$ to the trace map
\begin{equation*}
\small\begin{split}
&F_{X*}^{g'}(\sigma(B)\cdot\O_X((1-p^{g'})(K_X + B))) \otimes \O_X((1-p^{(e-1)g'})(K_X + B)) \\
&\cong F_{X*}^{g'}(\sigma(B)\cdot\O_X((1-p^{eg'})(K_X + B)))  \to \sigma(B)\cdot \O_X((1-p^{(e-1)g'})(K_X + B)),
\end{split}
\end{equation*}
and $\gamma_1$ arises naturally. Since $F_{X}^{(e-1)g'}$ is an affine morphism, $\gamma_2$ is surjective and
$$\mathrm{ker}(\gamma_2) = F^{(e - 1)g'}_{X*}(K_{g'}\otimes \mathcal{O}_X ((1-p^{(e - 1)g'})(K_X + B))).$$ Let $K' = \mathrm{ker}(\gamma_1)$. Since $\gamma_2$ is surjective,
applying Snake Lemma we obtain that $\gamma_1$ is surjective and $K'\cong F^{(e - 1)g'}_{X*}(K_{g'}\otimes \mathcal{O}_X ((1-p^{(e - 1)g'})(K_X + B)))$. It follows an exact sequence
\begin{align}\label{exact}
0\to F^{(e - 1)g'}_{X*}(K_{g'}\otimes \mathcal{O}_X ((1-p^{(e - 1)g'})(K_X + B))) \to  K_{eg'} \to  K_{(e - 1)g'} \to 0
\end{align}
And since $A$ is ample, we have
\begin{itemize}
\item[(a)]
there exists $M_0>0$ such that for any $t \in [0,1]$, the divisor $M_0A + D - t(K_X + B)$ is nef; and
\item[(b)]
applying Fujita vanishing (Lemma \ref{rel-fujita}), there exists an integer $M_1$ such that for any $l>M_1$ and any nef Cartier divisor $P$, $H^1(X, K_{g'} \otimes \O_X(lA + P)) = 0$.
\end{itemize}

Let $M = M_0 + M_1$. Fix an integer $m >M$. We aim to show that the trace map $\Phi_{e,m}$ is surjective.
Tensoring the following exact sequence with $\O_X(mA+D)$
$$0\to K_{eg'}\to   F_{X*}^{eg'}(\sigma(B)\cdot\O_X((1-p^{eg'})(K_X + B)))\to \sigma(B)\to 0,$$
and taking cohomology, it is sufficient to show that for every $e>0$,
$$H^1(X, K_{eg'} \otimes \O_X(mA+D)) = 0.$$
By $mA + D = (m-M_0)A + (M_0A + D)$, applying (a) and (b) we can show the case $e=1$. Assume by induction that $H^1(X, K_{(e-1)g'} \otimes \O_X(mA+D)) = 0$ for some $e\geq 2$.
The condition (a) implies that $p^{(e - 1)g'}(M_0A+D) + (1-p^{(e - 1)g'})(K_X + B)$ is nef. Then applying the condition (b), it follows that
\begin{equation*}
\small\begin{split}
&H^1(X, F^{(e - 1)g'}_{X*}(K_{g'}\otimes \mathcal{O}_X ((1-p^{(e - 1)g'})(K_X + B))) \otimes \O_X(mA+D))\\
&\cong H^1(X, K_{g'}\otimes \mathcal{O}_X (p^{(e - 1)g'}(m-M_0)A + p^{(e - 1)g'}(M_0A+D) + (1-p^{(e - 1)g'})(K_X + B)))=0
\end{split}
\end{equation*}
Tensoring the exact sequence (\ref{exact}) with $\O_X(mA+D)$ and taking cohomology, we deduce that $H^1(X, K_{eg'} \otimes \O_X(mA+D)) = 0$.
\end{proof}

\subsection{Derived categories}
Let $X$ be a projective variety, we denote by $D^b(X)$ the
bounded derived category of coherent sheaves on $X$.

For an object $\mathcal{F} \in D^b(X)$ represented by the complex
$$\mathcal{K}^{\bullet}:~ \cdots \to K^{n-1} \to K^{n} \to K^{n+1} \to \cdots$$
we have truncations (\cite[p. 69]{Har66})
$$\sigma_{\leq n}(\mathcal{K}^{\bullet}):~  \cdots \to K^{n-1} \to \mathrm{ker}~d^n \to 0 \to \cdots ~
\mathrm{with}~\mathcal{H}^i(\sigma_{\leq n}(\mathcal{K}^{\bullet})) \cong \mathcal{H}^i(\mathcal{K}^{\bullet})~\mathrm{for}~i\leq n$$
and
$$\sigma_{> n}(\mathcal{K}^{\bullet}):~  \cdots \to 0 \to \mathrm{im}~d^n \to K^{n+1} \to \cdots
\mathrm{with}~\mathcal{H}^i(\sigma_{> n}(\mathcal{K}^{\bullet})) \cong \mathcal{H}^i(\mathcal{K}^{\bullet})~\mathrm{for}~i> n.$$
The exact sequence below
$$0 \to \sigma_{\leq n}(\mathcal{K}^{\bullet}) \to \mathcal{K}^{\bullet} \to \sigma_{> n}(\mathcal{K}^{\bullet}) \to 0,$$
descends to a triangle in $D^b(X)$ (\cite[p. 63 Remark after Prop. 6.1]{Har66})
$$\sigma_{\leq n}(\mathcal{F}) \to \mathcal{F} \to \sigma_{> n}(\mathcal{F}) \to \sigma_{\leq n}(\mathcal{F})[1].$$

Let $f: X \rightarrow Y$ be a projective morphism of projective varieties. Assume $Y$ is smooth. We have derived functors
$Rf_*: D^b(X) \to D^b(Y)$ and $Lf^*: D^b(Y) \to D^b(X)$ of $f_*$ and $f^*$ respectively (\cite[Chap. II Sec. 2,4]{Har66}.
By Grothendieck duality (\cite[Chap. III Sec. 11]{Har66}) there exists a functor $f^!$ such that
$$R\mathcal{H}om_Y(Rf_*\mathcal{E}, \mathcal{F}) \cong Rf_*R\mathcal{H}om_X(\mathcal{E}, f^!\mathcal{F})$$
where $\mathcal{E} \in D^b(X), \mathcal{F}  \in D^b(Y)$.
In particular if both $X$ and $Y$ are smooth, then $f^!\mathcal{F} \cong Lf^*\mathcal{F} \otimes \O_X(K_{X/Y})[\dim X/Y]$ (\cite[Chap. VI Sec. 4]{Har66}).

\subsection{Cohomology of flat complexes under base changes}
Let's recall the following result, which is an adaption of \cite[Chap. III Cor. 12.11]{Har77} to flat bounded complexes.
Though this is known to experts (\cite[Remark of 3.6]{PP11a}), we explain the modifications of the proof for the convenience of the reader.
\begin{thm}\label{coh-res-fib}
Let $f: X \to Y$ be a projective morphism of varieties. Let $\mathcal{K}^{\bullet}$ be a bounded complex of coherent sheaves on $X$ such that every $\mathcal{K}^{i}$ is flat over $Y$. For a closed point $y \in Y$,

(1) if the natural map $\varphi^i_y: R^if_*\mathcal{K}^{\bullet}\otimes k(y) \to R^i\Gamma(X_y, \mathcal{K}^{\bullet}_y)$ is surjective, then $\varphi_y^i$ is an isomorphism, and there exists a neighborhood $U$ of $y$ such that for any $y'\in U$, the map $\varphi^i_{y'}$ is surjective;

(2) if $\varphi^i_y$ is surjective then the following two conditions are equivalent to each other
\begin{itemize}
\item[(2.1)]
$\varphi^{i-1}_y: R^{i-1}f_*\mathcal{K}^{\bullet}\otimes k(y) \to R^{i-1}\Gamma(X_y, \mathcal{K}^{\bullet}_y)$ is surjective;
\item[(2.2)]
$R^if_*\mathcal{K}^{\bullet}$ is locally free at $y$.
\end{itemize}
\end{thm}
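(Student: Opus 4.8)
The assertions are local on $Y$, so I would first replace $Y$ by an affine neighbourhood $\Spec A$ of $y$ with $A$ Noetherian, and then reduce everything to a statement about a single bounded complex of finitely generated free $A$-modules, following the template of the single-sheaf case \cite[Chap. III, \S 12]{Har77}. \emph{Step 1 (a universal hyper-\v{C}ech complex).} Fix a finite affine open cover $\mathfrak{U}=\{U_j\}$ of $X$; since $X$ is separated, all the intersections $U_{j_0\cdots j_p}$ are affine. Form the \v{C}ech double complex $C^{p,q}=\check{C}^p(\mathfrak{U},\mathcal{K}^q)$ and let $T^{\bullet}=\mathrm{Tot}(C^{\bullet,\bullet})$. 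Since $\mathfrak{U}$ is finite and $\mathcal{K}^{\bullet}$ is bounded, $T^{\bullet}$ is a bounded complex; since each $\mathcal{K}^q$ is $Y$-flat, each $\Gamma(U_{j_0\cdots j_p},\mathcal{K}^q)$ is a flat $A$-module, so every term $T^n$ is $A$-flat. Because the cover and its intersections are affine, $H^i(T^{\bullet})=\mathbb{H}^i(X,\mathcal{K}^{\bullet})=R^if_*\mathcal{K}^{\bullet}$. For any $A$-module $M$ one has $\check{C}^p(\mathfrak{U},\mathcal{K}^q)\otimes_A M=\check{C}^p(\mathfrak{U},\mathcal{K}^q\otimes_{\mathcal{O}_Y}\widetilde{M})$, so $T^{\bullet}\otimes_A M$ is the total \v{C}ech complex of $\mathcal{K}^{\bullet}\otimes_{\mathcal{O}_Y}\widetilde{M}$; taking $M=k(y)$ and using $Y$-flatness of the $\mathcal{K}^q$ (so that $\mathcal{K}^q\otimes_{\mathcal{O}_Y}k(y)=\mathcal{K}^q|_{X_y}$ with no higher $\mathrm{Tor}$), we find that $T^{\bullet}\otimes_A k(y)$ computes $R^i\Gamma(X_y,\mathcal{K}^{\bullet}_y)$ and that $\varphi^i_y$ is identified with the natural map $H^i(T^{\bullet})\otimes_A k(y)\to H^i(T^{\bullet}\otimes_A k(y))$.

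\emph{Step 2 (reduction to finite free modules).} The cohomology sheaves $H^i(T^{\bullet})=R^if_*\mathcal{K}^{\bullet}$ are coherent (properness of $f$, together with coherence and boundedness of $\mathcal{K}^{\bullet}$), so $T^{\bullet}$ is a bounded complex of $A$-flat modules with finitely generated cohomology. By the standard homological lemma behind \cite[Chap. III, Cor. 12.11]{Har77}, there exist a bounded complex $Q^{\bullet}$ of finitely generated free $A$-modules and a quasi-isomorphism $Q^{\bullet}\to T^{\bullet}$ which remains a quasi-isomorphism after applying $-\otimes_A M$ for every $A$-module $M$ — here one uses precisely that the terms of $T^{\bullet}$ are $A$-flat. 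Hence $H^i(Q^{\bullet})\cong R^if_*\mathcal{K}^{\bullet}$ as coherent sheaves, and for each closed point $y$ the map $\varphi^i_y$ is identified with the canonical map $H^i(Q^{\bullet})\otimes_A k(y)\to H^i(Q^{\bullet}\otimes_A k(y))$.

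\emph{Step 3 (linear algebra).} Both (1) and (2) are now assertions purely about the bounded complex $Q^{\bullet}$ of finitely generated free modules over the Noetherian ring $A$, and they follow from the arguments surrounding \cite[Chap. III, Cor. 12.11]{Har77}. Writing $d^{\bullet}$ for the differentials and $W^i=\mathrm{coker}(d^{i-1}\colon Q^{i-1}\to Q^i)$, right-exactness of the tensor product gives $H^i(Q^{\bullet}\otimes_A M)=\ker(W^i\otimes_A M\to Q^{i+1}\otimes_A M)$ for all $M$; a short diagram chase then shows that $\varphi^i_y$ is surjective if and only if $\mathrm{Tor}^A_1(W^{i+1},k(y))=0$, equivalently if and only if $W^{i+1}$ is locally free in a neighbourhood of $y$ — which is an open condition, and when it holds $\varphi^i_y$ is in fact an isomorphism; this gives (1). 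Granting this, $\mathrm{im}(d^i)$ is locally free near $y$, so the exact sequence $0\to H^i(Q^{\bullet})\to W^i\to\mathrm{im}(d^i)\to 0$ splits locally, whence $H^i(Q^{\bullet})=R^if_*\mathcal{K}^{\bullet}$ is locally free at $y$ exactly when $W^i$ is, that is — by the same criterion — exactly when $\varphi^{i-1}_y$ is surjective; this gives (2).

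\emph{Where the work is.} Nothing after Step 1 is sensitive to whether one started from a coherent sheaf or from a bounded flat complex, so the only genuine content of the adaptation lies in Step 1: checking that the total \v{C}ech complex of $\mathcal{K}^{\bullet}$ is bounded, is termwise $A$-flat, computes the hyperdirect images $R^{\bullet}f_*\mathcal{K}^{\bullet}$, and commutes with base change to the fibres. Each of these is a routine consequence of the boundedness of $\mathcal{K}^{\bullet}$, the finiteness of the affine cover, and the termwise $Y$-flatness of the $\mathcal{K}^q$; but assembled, they are exactly the modification one must make, the remaining homological algebra being the classical cohomology-and-base-change machinery transplanted without change.
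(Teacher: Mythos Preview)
Your proof is correct and follows essentially the same route as the paper: form the total \v{C}ech double complex of $\mathcal{K}^{\bullet}$ over a finite affine cover, observe that its terms are $A$-flat and that it computes $R^i\Gamma(X,\mathcal{K}^{\bullet}\otimes_A M)$ for all $M$, and then feed this into the standard machinery of \cite[Chap.~III, \S 12]{Har77}. The paper phrases the final step as verifying the analogues of Propositions~12.1 and~12.2 (exactness-in-the-middle of the functor $T^i(M)$ and existence of a bounded complex of free modules computing it), while you go directly through the finite free replacement $Q^{\bullet}$ and the explicit $\mathrm{Tor}$ criterion; these are the same argument in slightly different dress.
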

\begin{proof}
We assume $Y = \mathrm{Spec}~ A$ is affine. For an $A$-module $M$, define the functor
$$T^i(M) = R^i\Gamma(X, \mathcal{K}^{\bullet} \otimes_A M).$$
To adapt the arguments of \cite[Chap. III, Sec. 12]{Har77} to flat complexes , we only need to verify
the analogue of \cite[Chap. III, Prop. 12.1 and 12.2]{Har77}.

\medskip

(a) Tensoring $\mathcal{K}^{\bullet}$ with a short exact exact sequence $0 \to M' \to M \to M'' \to 0$ of $A$-modules, since $\mathcal{K}^{\bullet}$ is flat, we get a short exact sequence of complexes of sheaves on $X$
$$0 \to \mathcal{K}^{\bullet}\otimes_A M' \to \mathcal{K}^{\bullet}\otimes_A M \to \mathcal{K}^{\bullet}\otimes_A M'' \to 0.$$
Taking cohomology shows that $T^i$ is exact in the middle. So the analogue of \cite[Chap. III, Prop. 12.1]{Har77} holds.

\medskip

(b) Fix an open affine cover $\mathcal{U} = \{U_i\}_{i \in I}$ of $X$. Consider the double complex
$$C^{\bullet}(\mathcal{U}, \mathcal{K}^{\bullet}) = (C^{q}(\mathcal{U}, \mathcal{K}^{p}), d'^{pq}, d''^{pq})$$
where $d'^{pq}$ come from \u{C}ech complexes $C^{\bullet}(\mathcal{U}, \mathcal{K}^p)$ and $d''^{pq}$ are induced by the differentials of $\mathcal{K}^{\bullet}$.
Consider the total complex $C^{\bullet}$
$$C^i = \bigoplus_{p+q = i} C^q(\mathcal{U}, \mathcal{K}^p), d^i =\sum_{p+q = i}(-1)^pd'^{pq} + d''^{pq}:  C^i\to C^{i+1}.$$
Note that $C^i$ is a flat $A$-module, the total complex of $C^{\bullet}(\mathcal{U}, \mathcal{K}^{\bullet} \otimes_A M)$ coincides with $C^{\bullet}\otimes_A M$, thus $H^i(C^{\bullet}\otimes_A M) \cong R^i\Gamma(X, \mathcal{K}^{\bullet} \otimes_A M)$. Then we can show the analogue of \cite[Chap. III, Prop. 12.2]{Har77} by similar arguments.
\end{proof}

\subsection{Abelian subvarieties generated by subschemes}
Let $A$ be an abelian variety of dimension $d$ and $V$ a closed subscheme of $A$. Let $W_n$ be the reduced subscheme supported on the $n$-fold sum $V + V + \cdots + V$. Let $N$ be an integer (say, $N = \dim A$) such that $d' = \dim W_N$ attains the maximum of $\dim W_n$. Let $Z$ be an irreducible component of $W_N$ with $\dim Z = d'$.

\begin{lem}\label{ab-subvar}
With the above notation, take a closed point $z \in Z$ and let $Z_0 = Z - z$. Then

(1) $Z_0$ is an abelian subvariety of $A$; and

(2) every component of $W_n$ is contained in a translate of $Z_0$.
\end{lem}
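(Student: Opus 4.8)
The plan is to derive everything from a single dimension-counting observation. We may assume $V\neq\emptyset$. Fix an integer $N$ with $\dim W_N=d':=\max_n\dim W_n$, and pick $v_0\in V$; then $\dim W_n\leq d'$ for every $n$, while for $n\geq N$ the inclusion $W_n\supseteq W_N+(n-N)v_0$ gives $\dim W_n=d'$. Two formal facts will be used throughout: since $A$ is complete the addition morphism $A\times A\to A$ is proper, so the sum $S+T$ of closed subsets $S,T\subseteq A$ is closed, and it is irreducible when $S$ and $T$ are; and a closed irreducible subset of dimension $e$ contained in another of dimension $e$ coincides with it.

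I would prove (2) first, since (1) then falls out. Let $C$ be an irreducible component of some $W_n$ and fix a closed point $c_0\in C$. Then $C+Z\subseteq W_{n+N}$, so $\dim(C+Z)\leq d'$, whereas $c_0+Z\subseteq C+Z$ is a translate of $Z$, of dimension $d'$. Since $C+Z$ is closed and irreducible, the inclusion $c_0+Z\subseteq C+Z$ of closed irreducibles of equal dimension forces $C+Z=c_0+Z$. From $C+z\subseteq C+Z=c_0+Z$ we get, subtracting $z$, that $C\subseteq c_0+(Z-z)=c_0+Z_0$, which is assertion (2).

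For (1), run the same argument with $Z_0$ in place of $Z$: the set $Z_0+Z_0=(Z+Z)-2z$ is contained in the translate $W_{2N}-2z$, so has dimension $\leq d'$; it contains $0+Z_0=Z_0$ because $0\in Z_0$; and it is closed, irreducible, of dimension $\geq\dim Z_0=d'$. Hence $Z_0+Z_0=Z_0$. Next, for $y\in Z_0$ the translation $x\mapsto x+y$ restricts to an injective morphism $Z_0\to Z_0$ (it lands in $Z_0$ since $Z_0+Z_0=Z_0$); its image is closed, irreducible, of dimension $\dim Z_0$, hence is all of $Z_0$, so $x+y=0$ for some $x\in Z_0$, i.e. $-y\in Z_0$. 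Thus $Z_0$ is a closed, complete, irreducible subvariety of $A$ through the origin, stable under addition and inversion, so the restricted operations make it a group variety; it is smooth (a reduced group scheme of finite type over the algebraically closed field $k$ is smooth) and, being complete and connected, an abelian subvariety of $A$. This proves (1).

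The only genuinely delicate point is the dimension bookkeeping that pins $C+Z$ (and $Z_0+Z_0$) down to a single translate: it requires invoking the maximality of $d'$ through the inclusion $C+Z\subseteq W_{n+N}$ with $n+N\geq N$, and it requires all the sets in play to be honestly closed and irreducible so that ``equal dimension forces equality'' is legitimate. Everything else is formal.
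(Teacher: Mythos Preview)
Your argument is correct and follows essentially the same dimension-counting route as the paper: both hinge on showing $Z_0+Z_0=Z_0$ by bounding the dimension of the sum via the maximality of $d'$, and for (2) both trap a component inside a translate by the same trick. The only real difference is cosmetic: you prove closure of $Z_0$ under inversion by hand (surjectivity of the translation map $Z_0\to Z_0$), whereas the paper simply invokes \cite[Appendix to Sec.~4, Theorem]{Mum74} for this step; and you reverse the order, doing (2) before (1).
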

\begin{proof}
(1) Note that $Z_0 + Z_0$ is irreducible because it is the image of $m: Z_0 \times Z_0 \to A$ via $m(z_1, z_2) = z_1 + z_1$. Since $0 \in Z_0$, we have $\mathrm{Supp} Z_0 \subseteq \mathrm{Supp}(Z_0 + Z_0)$, then $\dim Z_0 = \dim (Z_0 + Z_0)$ implies that $\mathrm{Supp} Z_0 = \mathrm{Supp}(Z_0 + Z_0)$. Applying \cite[ p. 44, Theorem of Appendix to Sec. 4]{Mum74}, we see that $Z_0$ is an abelian subvariety of $A$.

(2) For a component $Z'$, take $z' \in Z'$ and let $Z_0' = Z' - z'$. Since $0\in Z_0'$, $Z_0' + Z_0$ is irreducible and contains both $Z_0'$ and $Z_0$. And since $\dim Z_0$ attains maximum, $Z_0' + Z_0 = Z_0$, which shows $Z_0' \subseteq Z_0$. Then (2) follows easily.
\end{proof}

\section{Sheaves on abelian varieties}\label{sec-shf-ab-var}
In this section, we will study sheaves on abelian varieties. The main result is Theorem \ref{CGG}, which, as a generalization of \cite[Proposition 2.7]{EZ16}, is used in the present paper to study fibrations over abelian varieties. The idea of the proof is stimulated by \cite[Theorem 3.1.1]{HP16}.

\begin{notation}\label{not}
We work over an algebraically closed field $k$. Let $A$ be an abelian variety of dimension $d$, $\hat{A}= \mathrm{Pic}^0(A)$ and $\mathcal{P}$ the Poincar\'{e} line bundle on
$A \times \hat{A}$. Let $p,q$ denote the projections from $A \times \hat{A}$ to $A, \hat{A}$ respectively. The \emph{Fourier-Mukai transform} $R\Phi_{\mathcal{P}}: D^b(A) \rightarrow D^b(\hat{A})$ w.r.t. $\mathcal{P}$ is defined as
$$R\Phi_{\mathcal{P}}(-) := Rq_*(Lp^*(-)\otimes \mathcal{P})$$
which is a right derived functor.
Similarly $R\Psi_{\mathcal{P}}: D^b(\hat{A}) \rightarrow D^b(A)$ is defined as
$$R\Psi_{\mathcal{P}}(-) := Rp_*(Lq^*(-) \otimes \mathcal{P}).$$

If $L$ is an ample line bundle on $\hat{A}$, then $H^i(A, L\otimes \mathcal{P}_t) = 0$ for $i>0$ and every $t \in A$ (\cite[Sec. 13]{Mum74}), thus $\hat{L}: = R^0\Psi_{\mathcal{P}}L \cong R\Psi_{\mathcal{P}}L$ is a locally free sheaf of rank $h^0(\hat{A}, L)$ by Theorem \ref{coh-res-fib}.

Note that since $p, q$ are smooth morphisms, we have $Lp^* \cong p^*, Lq^*\cong q^*$. In what follows, for an isogeny $\pi: A_1 \to A$ of abelian varieties, by abuse of the notation of the pull-back map of sheaves, we use $\pi^*: \hat{A} \to \hat{A}_1$ for the dual map of $\pi$.

For a coherent sheaf $\mathcal{F}$ on $A$, we define
$$D_A(\mathcal{F}) := R\mathcal{H}om_X(\mathcal{F}, \omega_A)[d],$$
then applying Grothendieck duality we have
$$D_k(R\Gamma(\mathcal{F})) \cong R\Gamma(D_A(\mathcal{F})).$$

For a closed point $t_0 \in A$, the translating morphism $T_{t_0}: A \to A$ is defined via $t \mapsto t+ t_0$. For $\hat{t}_0 \in \hat{A}$, $T_{\hat{t}_0}$ is similarly defined.
\end{notation}

\begin{thm}[\cite{Mu81}]\label{Mu} Using Notation \ref{not}, we have
\begin{itemize}
\item[(1)]
$R\Psi_{\mathcal{P}} \circ R\Phi_{\mathcal{P}} \cong (-1)_A^*[-d],~~~R\Phi_{\mathcal{P}} \circ R\Psi_{\mathcal{P}} \cong (-1)_{\hat{A}}^*[-d]$;
\item[(2)]
$R\Phi_{\mathcal{P}}\circ (-1)_A^* \cong (-1)_{\hat{A}}^* \circ R\Phi_{\mathcal{P}}, R\Psi_{\mathcal{P}}\circ (-1)_{\hat{A}}^* \cong (-1)_{A}^* \circ R\Psi_{\mathcal{P}}$; and
\item[(3)]
for $\hat{t}_0 \in \hat{A}$, $R\Psi_{\mathcal{P}} \circ T_{\hat{t}_0}^* \cong \mathcal{P}_{-\hat{t}_0} \otimes R\Psi_{\mathcal{P}}$.
\end{itemize}
\end{thm}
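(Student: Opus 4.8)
The plan is to treat (1) — which is the substantive assertion, Mukai's inversion/duality theorem — first, and then read off (2) and (3) as formal consequences of the see-saw principle and flat base change. For (1), I would realize the composition $R\Psi_{\mathcal{P}}\circ R\Phi_{\mathcal{P}}$ as an integral transform $D^b(A)\to D^b(A)$ and compute its kernel. Working on the triple product $A\times\hat A\times A$ with the partial projections $p_{12},p_{23},p_{13}$, and using that $p,q$ are smooth (so the $L$'s of Notation \ref{not} are ordinary pullbacks) together with flat base change, the standard composition-of-kernels computation identifies $R\Psi_{\mathcal{P}}\circ R\Phi_{\mathcal{P}}$ with the transform $\mathcal{F}\mapsto R\,\mathrm{pr}_{2*}(\mathrm{pr}_1^*\mathcal{F}\otimes\mathcal{Q})$ on $A\times A$, where $\mathcal{Q}:=Rp_{13*}(p_{12}^*\mathcal{P}\otimes p_{23}^*\mathcal{P})$.

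The next step is to compute $\mathcal{Q}$. Here I would use the see-saw principle (via $T_a^*P_{\hat\xi}\cong P_{\hat\xi}$ for $P_{\hat\xi}\in\Pic^0(A)$ and $P_{\hat\xi+\hat\eta}\cong P_{\hat\xi}\otimes P_{\hat\eta}$) to get $p_{12}^*\mathcal{P}\otimes p_{23}^*\mathcal{P}\cong s^*\mathcal{P}$ on $A\times\hat A\times A$, where $s(a_1,\hat\xi,a_2)=(a_1+a_2,\hat\xi)$; the substitution $b=a_1+a_2$ turns $s$ into the projection forgetting $a_2$ and $p_{13}$ into an automorphism of $A\times A$ composed with the projection forgetting $\hat\xi$, so flat base change reduces the computation of $\mathcal{Q}$ to that of $Rp_{A*}\mathcal{P}$ for $p_A\colon A\times\hat A\to A$. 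At this point I would invoke the one genuinely nontrivial external input — Mumford's Index Theorem for the Poincar\'e bundle: $Rp_{A*}\mathcal{P}\cong k(0_A)[-d]$. (Since $\mathcal{P}|_{\{a\}\times\hat A}$ is a nontrivial degree-$0$ line bundle, hence has vanishing cohomology in all degrees when $a\ne 0$, semicontinuity / cohomology-and-base-change forces $Rp_{A*}\mathcal{P}$ to be supported at $0_A$; Mumford's theorem — e.g.\ via $(1\times\phi_L)^*\mathcal{P}\cong\Lambda(L)$ and Riemann--Roch for an ample $L$ — then places it in degree $d$ with fibre $k$.) Plugging this back, $\mathcal{Q}$ becomes the structure sheaf of the antidiagonal $\{(a,-a)\}\subset A\times A$ shifted by $[-d]$, and unwinding the transform yields $R\Psi_{\mathcal{P}}\circ R\Phi_{\mathcal{P}}\cong(-1)_A^*[-d]$. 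The other identity $R\Phi_{\mathcal{P}}\circ R\Psi_{\mathcal{P}}\cong(-1)_{\hat A}^*[-d]$ comes from the same computation with $A$ and $\hat A$ interchanged (using $\hat{\hat A}=A$ and that $\mathcal{P}$ realizes biduality).

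For (2) I would first record the see-saw identities $(-1)_{A\times\hat A}^*\mathcal{P}\cong\mathcal{P}$ and $(1_A\times(-1)_{\hat A})^*\mathcal{P}\cong\mathcal{P}^{-1}\cong((-1)_A\times1_{\hat A})^*\mathcal{P}$, each checked by restricting to $A\times\{\hat\xi\}$ and to one of the slices $\{0\}\times\hat A$, $A\times\{0\}$. Then, expanding $R\Phi_{\mathcal{P}}((-1)_A^*\mathcal{F})=Rq_*(p^*(-1)_A^*\mathcal{F}\otimes\mathcal{P})$, using $p\circ\iota=(-1)_A\circ p$ and $\iota^*\mathcal{P}\cong\mathcal{P}^{-1}$ for $\iota=(-1)_A\times1_{\hat A}$, and applying flat base change along $\iota$ (covering $1_{\hat A}$ via $q$) and along $1_A\times(-1)_{\hat A}$ (covering $(-1)_{\hat A}$ via $q$), both sides collapse to $Rq_*(p^*\mathcal{F}\otimes\mathcal{P}^{-1})$, giving $R\Phi_{\mathcal{P}}\circ(-1)_A^*\cong(-1)_{\hat A}^*\circ R\Phi_{\mathcal{P}}$. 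For (3) the relevant see-saw identity is the theorem of the square $(1_A\times T_{-\hat t_0})^*\mathcal{P}\cong\mathcal{P}\otimes p^*\mathcal{P}_{-\hat t_0}$; substituting $\mathcal{P}=\tau^*\big((1_A\times T_{-\hat t_0})^*\mathcal{P}\big)$ for $\tau=1_A\times T_{\hat t_0}$ into $R\Psi_{\mathcal{P}}(T_{\hat t_0}^*\mathcal{G})=Rp_*(q^*T_{\hat t_0}^*\mathcal{G}\otimes\mathcal{P})$, pulling $\tau^*$ outside, using flat base change along $\tau$ (covering $1_A$ via $p$), and finishing with the projection formula lands at $\mathcal{P}_{-\hat t_0}\otimes R\Psi_{\mathcal{P}}(\mathcal{G})$. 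In each of (2) and (3) the $R\Psi$/$R\Phi$ counterpart follows by exchanging $A$ and $\hat A$.

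The hard part will be the kernel computation in (1), and within it the Index Theorem identification $Rp_{A*}\mathcal{P}\cong k(0_A)[-d]$ — this is the only step where the geometry of abelian varieties genuinely intervenes, and it is an external input; everything else is careful bookkeeping with see-saw, the theorem of the square, flat base change, and keeping track of the shifts $[-d]$ and the involutions $(-1)^*$.
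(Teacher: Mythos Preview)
Your sketch is essentially Mukai's original argument and is correct in outline; the kernel-composition computation on $A\times\hat A\times A$, the see-saw reductions, and the identification $Rp_{A*}\mathcal{P}\cong k(0_A)[-d]$ are exactly the ingredients. Note, however, that the paper itself does not give a proof of this theorem at all: it is simply quoted from \cite{Mu81}, so there is no ``paper's own proof'' to compare against. What you have written is a faithful summary of Mukai's proof, and is more than the paper provides.
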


\begin{defn}\textup{(\cite[Def. 3.1]{PP11a})}\label{defgv}
Given a coherent sheaf $\mathcal{F}$ on an abelian variety $A$, its $i^{\mathrm{th}}$ \emph{cohomological support locus} is defined as
$$V^i(\mathcal{F}): = \{\alpha \in \mathrm{Pic}^0(A)| h^i(\mathcal{F} \otimes \alpha) > 0\}$$
which are Zariski closed by semi-continuity.
If
$$gv(\mathcal{F}): = \mathrm{min}_{i>0}\{\mathrm{codim}_{\mathrm{Pic}^0(A)}V^i(\mathcal{F}) - i\} \geq 0$$
we say $\mathcal{F}$ is a \emph{GV-sheaf}.
\end{defn}

\begin{prop}\label{fm-gv-sh} Using Notation \ref{not}, let $\mathcal{F}$ be a coherent sheaf on $A$.

(1) For an ample line bundle $H$ on $\hat{A}$, there is a natural isomorphism
$$D_k(R\Gamma(\mathcal{F}\otimes \hat{H}^*))
\cong R\Gamma(R\Phi_{\mathcal{P}}D_A(\mathcal{F})\otimes H),$$
in particular, $H^i(A, \mathcal{F}\otimes \hat{H}^*)^* \cong R^{-i}\Gamma(R\Phi_{\mathcal{P}}D_A(\mathcal{F})\otimes H)$ for every integer $i$.

(2) The Fourier-Mukai transform
$$R\Phi_{\mathcal{P}}D_A(\mathcal{F}) \in D^{[-d, 0]}(A)~\mathrm{and}~\mathrm{Supp} (-1)_{\hat{A}}^*R^0\Phi_{\mathcal{P}}D_A(\mathcal{F}) = V^0(\mathcal{F}).$$

(3) The following three conditions are equivalent to each other
\begin{itemize}
\item[(i)]
the sheaf $\mathcal{F}$ is a GV-sheaf;
\item[(ii)]
$R\Phi_{\mathcal{P}} D_A(\mathcal{F}) \cong R^0\Phi_{\mathcal{P}} D_A(\mathcal{F})$;
\item[(iii)]
for any sufficiently ample line bundle $L$ on $\hat{A}$,
$$H^i(A, \mathcal{F} \otimes \hat{L}^*) = 0~ \mathrm{for} ~ i>0.$$
\end{itemize}
\end{prop}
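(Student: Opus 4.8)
The plan is to route everything through the Fourier--Mukai transform and Grothendieck duality, with the base-change statement Theorem \ref{coh-res-fib} as the only non-formal input for (2) and the generic vanishing theorem standing behind (i)$\Leftrightarrow$(ii). I would prove (1) first, deduce the amplitude bound of (2) from it, then the support statement of (2) by base change for the top cohomology sheaf, and finally the equivalences of (3). For (1): writing $R\Phi_{\mathcal P}(-) = Rq_*(p^*(-)\otimes\mathcal P)$ and applying the projection formula first for $q$ and then for $p$,
$$R\Gamma\big(\hat A,\,R\Phi_{\mathcal P}D_A(\F)\otimes H\big)\cong R\Gamma\big(A\times\hat A,\ p^*D_A(\F)\otimes\mathcal P\otimes q^*H\big)\cong R\Gamma\big(A,\ D_A(\F)\otimes Rp_*(\mathcal P\otimes q^*H)\big),$$
where $Rp_*(\mathcal P\otimes q^*H) = R\Psi_{\mathcal P}H = \hat H$ is the locally free sheaf of Notation \ref{not} placed in cohomological degree $0$ (using $H$ ample on $\hat A$); since $\hat H$ is locally free $D_A(\F)\otimes\hat H\cong D_A(\F\otimes\hat H^*)$, so Grothendieck duality $R\Gamma(D_A(-))\cong D_k(R\Gamma(-))$ rewrites the last term as $D_k(R\Gamma(A,\F\otimes\hat H^*))$. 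Passing to $(-i)$-th cohomology, and using that $D_k$ exchanges degree $i$ with the $k$-dual in degree $-i$, yields the stated identity of vector spaces.

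\textbf{Part (2).} Since $A$ is smooth of dimension $d$, $D_A(\F) = R\mathcal{H}om(\F,\omega_A)[d]\in D^{[-d,0]}(A)$, so $R\Phi_{\mathcal P}D_A(\F)$ is a bounded complex on $\hat A$. Choosing $H$ on $\hat A$ ample enough that Serre vanishing degenerates the hypercohomology spectral sequence, $R^j\Gamma(R\Phi_{\mathcal P}D_A(\F)\otimes H)\cong H^0\big(\hat A,\mathcal{H}^j(R\Phi_{\mathcal P}D_A(\F))\otimes H\big)$; by (1) the left side is $H^{-j}(A,\F\otimes\hat H^*)^*$, which vanishes for $j\notin[-d,0]$, so $\mathcal{H}^j(R\Phi_{\mathcal P}D_A(\F)) = 0$ for $j\notin[-d,0]$, i.e. $R\Phi_{\mathcal P}D_A(\F)\in D^{[-d,0]}(\hat A)$. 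For the support statement I would represent $D_A(\F)$ by a bounded complex of locally free sheaves, so that $p^*D_A(\F)\otimes\mathcal P\otimes q^*H$ is a bounded complex flat over $\hat A$ and Theorem \ref{coh-res-fib}(1) applies to its top cohomology: for each $\alpha\in\hat A$ the base-change map $R^0\Phi_{\mathcal P}D_A(\F)\otimes k(\alpha)\to R^0\Gamma(A, D_A(\F)\otimes\alpha)$ is surjective, hence an isomorphism, and Grothendieck duality gives $R^0\Gamma(A, D_A(\F)\otimes\alpha) = R^0\Gamma(A, D_A(\F\otimes\alpha^{-1}))\cong H^0(A,\F\otimes\alpha^{-1})^*$. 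Thus $R^0\Phi_{\mathcal P}D_A(\F)$ is supported exactly on $(-1)_{\hat A}^*V^0(\F)$, and applying $(-1)_{\hat A}^*$ gives $\mathrm{Supp}\big((-1)_{\hat A}^*R^0\Phi_{\mathcal P}D_A(\F)\big) = V^0(\F)$.

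\textbf{Part (3).} (ii)$\Rightarrow$(iii): if $R\Phi_{\mathcal P}D_A(\F)$ is the single sheaf $\G := R^0\Phi_{\mathcal P}D_A(\F)$, then (1) gives $H^i(A,\F\otimes\hat L^*)^*\cong H^{-i}(\hat A,\G\otimes L)$, which vanishes for $i>0$ once $L$ is sufficiently ample. (iii)$\Rightarrow$(ii): write $\G^\bullet := R\Phi_{\mathcal P}D_A(\F)\in D^{[-d,0]}(\hat A)$ and let $t_0$ be its least nonvanishing cohomological degree; if $t_0<0$, then the truncation triangle $\mathcal{H}^{t_0}(\G^\bullet)[-t_0]\to\G^\bullet\to\sigma_{>t_0}\G^\bullet\to$ together with $R^{t_0-1}\Gamma(\sigma_{>t_0}\G^\bullet\otimes L) = R^{t_0}\Gamma(\sigma_{>t_0}\G^\bullet\otimes L) = 0$ (that complex lives in degrees $\geq t_0+1$) gives $H^0(\hat A,\mathcal{H}^{t_0}(\G^\bullet)\otimes L)\cong R^{t_0}\Gamma(\G^\bullet\otimes L)$; for $L$ ample the left side is nonzero, while by (1) the right side is $H^{-t_0}(A,\F\otimes\hat L^*)^*$, which vanishes for $L$ sufficiently ample --- a contradiction. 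Hence $t_0 = 0$, so $\G^\bullet\cong\mathcal{H}^0(\G^\bullet)$, which is (ii). Finally (i)$\Leftrightarrow$(ii) is the Hacon--Pareschi--Popa characterization of GV-sheaves via the Fourier--Mukai transform of the dualizing complex (\cite{PP11a}): using Theorem \ref{coh-res-fib} and Serre duality on $A$ to relate the supports of the cohomology sheaves $\mathcal{H}^{-i}(R\Phi_{\mathcal P}D_A(\F))$ to the loci $V^i(\F)$, the inequalities $\codim V^i(\F)\geq i$ are exactly what one needs in order to propagate, by descending induction on $i$, the vanishing $\mathcal{H}^{-i}(R\Phi_{\mathcal P}D_A(\F)) = 0$ for $i>0$; conversely that vanishing bounds $\codim V^i(\F)$ from below.

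\textbf{Main obstacle.} The formal part (the manipulations in (1)--(2)) is routine once one stays disciplined about the duals $\hat H$ versus $\hat H^*$ and the shift $[d]$ in $D_A$. The genuinely substantial point is (i)$\Leftrightarrow$(ii): it is exactly generic vanishing on abelian varieties, and its proof has to control cohomology and base change for the complex $D_A(\F)$ along the closed strata where the $V^i(\F)$ can be positive-dimensional --- precisely where naive base change fails and a finer argument (via the induced filtration on $R\Phi_{\mathcal P}D_A(\F)$, or via Grothendieck--Verdier duality exchanged with $R\Phi_{\mathcal P}$) is required.
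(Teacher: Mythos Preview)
Your proposal is correct and follows the same route as the paper: (1) via Grothendieck duality and the projection formula (which the paper simply attributes to the proof of \cite[Theorem 1.2]{Hac04}), (2) via (1) plus Serre vanishing for the amplitude bound and Theorem \ref{coh-res-fib} for the support statement, and (3) by citing \cite[Theorem 1.2]{Hac04} and \cite[Lemma 3.6]{PP11a}, with your explicit (ii)$\Leftrightarrow$(iii) argument being a correct unpacking of the former. Two small slips to fix in your support argument: the flat complex should be $p^*D_A(\F)\otimes\mathcal P$ (drop the stray $q^*H$), and the surjectivity of $\varphi^0_\alpha$ is not automatic from ``top cohomology'' alone---it follows because $R^1\Phi_{\mathcal P}D_A(\F)=0$ (from the amplitude bound you just proved), so $\varphi^1_\alpha$ is trivially surjective and Theorem \ref{coh-res-fib}(2) then forces $\varphi^0_\alpha$ surjective, which is exactly how the paper phrases it.
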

\begin{proof}
(1) is contained in the proof of \cite[Theorem 1.2]{Hac04}, which follows from applying Grothendieck duality and projection formula.

\medskip

(2) Take an ample line bundle $H$ on $\hat{A}$. By (1) we have that
\begin{itemize}
\item[(a)]
$R^{-i}\Gamma(R\Phi_{\mathcal{P}}D_A(\mathcal{F})\otimes H) = 0$ unless $0\leq i \leq d$.
\end{itemize}
Since $R\Phi_{\mathcal{P}}D_A(\mathcal{F}) \in D^b(\hat{A})$, we can assume $H$ is sufficiently ample such that, for every $q$,
\begin{itemize}
\item[(b)]
$R^q\Phi_{\mathcal{P}}D_A(\mathcal{F})\otimes H$ is globally generated, and
\item[(c)]
$R^{p}\Gamma(R^q\Phi_{\mathcal{P}}D_A(\mathcal{F})\otimes H) = 0$ whenever $p \neq 0$.
\end{itemize}
Since $H$ is a line bundle, we have the spectral sequence
$$E_2^{p,q} := R^{p}\Gamma(R^q\Phi_{\mathcal{P}}D_A(\mathcal{F})\otimes H) \cong R^{p}\Gamma(\mathcal{H}^q(R\Phi_{\mathcal{P}}D_A(\mathcal{F})\otimes^{L} H)) \Rightarrow R^{p+q}\Gamma(R\Phi_{\mathcal{P}}D_A(\mathcal{F})\otimes H).$$
By (c) we can show
$$E_{\infty}^{0,q} = E_2^{0,q} = \Gamma(R^q\Phi_{\mathcal{P}}D_A(\mathcal{F})\otimes H) \cong R^{q}\Gamma(R\Phi_{\mathcal{P}}D_A(\mathcal{F})\otimes H).$$
Then by (a) and (b), we conclude that
$R^i\Phi_{\mathcal{P}}D_A(\mathcal{F}) = 0~ \mathrm{unless}~ -d~\leq i \leq 0$, that is, $R\Phi_{\mathcal{P}}D_A(\mathcal{G}) \in D^{[-d, 0]}(A)$.

For $\alpha \in \hat{A}$, applying Grothendieck duality we have
$$H^j(\mathcal{F}\otimes \mathcal{P}_{\alpha})^* \cong R^{-j}\Gamma(D_A(\mathcal{F})\otimes \mathcal{P}_{-\alpha}).$$
In particular, $R^i\Gamma(D_A(\mathcal{F})\otimes \mathcal{P}_{-\alpha}) = 0$ for $i>0$.
Applying $Rq_*$ to $p^*D_A(\mathcal{F})\otimes \mathcal{P}$ on $A \times \hat{A}$, since $R^1\Phi_{\mathcal{P}}D_A(\mathcal{F}) = 0$, by Theorem \ref{coh-res-fib}, we conclude
$$H^0(\mathcal{F}\otimes \mathcal{P}_{\alpha})^* \cong R^0\Gamma(D_A(\mathcal{F})\otimes \mathcal{P}_{-\alpha}) \cong R^0\Phi_{\mathcal{P}}D_A(\mathcal{F}) \otimes k(-\alpha),$$
thus $\mathrm{Supp} (-1)_{\hat{A}}^*R^0\Phi_{\mathcal{P}}D_A(\mathcal{F}) = V^0(\mathcal{F})$.

\medskip

(3) follows from applying \cite[Theorem 1.2]{Hac04} and \cite[Lemma 3.6]{PP11a}.
\end{proof}

Part of the following theorem is known to experts, in particular assertion (4) also appeared in \cite[Sec. 1.2]{HPZ17}, which states a special reature of positive characteristic.
\begin{thm}\label{pf-OA}
Using Notation \ref{not}, then

(1) If $\tau$ is a coherent sheaf supported at finitely many closed points on $\hat{A}$ of length $r$, then
$U = R\Psi_{\mathcal{P}}\tau = R^0\Psi_{\mathcal{P}}\tau$ is a vector bundle of rank $r$ (homogenous vector bundle); moreover if $\mathrm{Supp} \tau = \{\hat{0}\}$ then $U$ is a unipotent vector bundle, that is, $U$ admits a filtration of vector bundles
$$0 = U_0 \subset U_1 \subset U_2 \subset \cdots \subset U_r = U$$
such that $U_i/U_{i-1} \cong \O_X$.

(2) If $\pi: A_1 \to A$ is an isogeny of abelian varieties, then
$\pi_*\O_{A_1} \cong \bigoplus_i U_{\hat{t}_i}$ where
\begin{itemize}
\item
$\hat{t}_i$ are finitely many closed points on $\hat{A}$ such that $\pi^*\mathcal{P}_{\hat{t}_i} \cong \O_{A_1}$, and
\item
$U_{\hat{t}_i} = R\Psi_{\mathcal{P}}\tau_i$ where $\tau_i$ is a skyscraper sheaf supported at $\hat{t}_i$.
\end{itemize}

(3) Let $\mathcal{F}$ be a coherent sheaf on $A$. Set $V^{\geq m}(\mathcal{F}) = \cup_{j\geq m} V^j(\mathcal{F})$.
If $\pi: A_1 \to A$ is an isogeny of abelian varieties, then
$$V^{m}(\pi^*\mathcal{F}) \subseteq \pi^* V^{m}(\mathcal{F})~ \mathrm{and}~ V^{\geq m}(\pi^*\mathcal{F}) = \pi^* V^{\geq m}(\mathcal{F}).$$
In particular, if for every $j> 0$, $h^j(A_1, \pi^*\mathcal{F}) = 0$, then $h^j(A, \mathcal{F}) = 0$ for $j>0$.

Moreover, we have $V^{0}(\pi^*\mathcal{F}) = \pi^* V^{0}(\mathcal{F})$.

(4) Assume $\mathrm{char}~k = p >0$. Let $\tau$ be a coherent sheaf supported on the union of finitely many closed points on $\hat{A}$ and let $U = R\Psi_{\mathcal{P}}\tau$. Then there exists an isogeny $\mu: B \to A$ of abelian varieties such that $\mu^*U \cong \bigoplus_i \mathcal{P}'_{\hat{s}_i}$ for some closed points $\hat{s}_i$ on $\hat{B}= \mathrm{Pic}^0(B)$, where $\mathcal{P}'$ denotes the Poincar\'{e} line bundle on $B\times \hat{B}$.
\end{thm}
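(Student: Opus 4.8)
The plan is to push everything through the Fourier--Mukai calculus of Notation \ref{not} and Theorem \ref{Mu}, reserving the one genuinely positive-characteristic ingredient for (4). For (1) I would pick a composition series $0=\tau_0\subset\cdots\subset\tau_r=\tau$ with $\tau_\ell/\tau_{\ell-1}\cong k(\hat t_\ell)$, $\hat t_\ell\in\mathrm{Supp}\,\tau$. Since $q^*k(\hat t)\otimes\mathcal P=\mathcal P|_{A\times\{\hat t\}}$ and $p$ restricts to an isomorphism on $A\times\{\hat t\}$, one gets $R\Psi_\mathcal P(k(\hat t))\cong\mathcal P_{\hat t}$ concentrated in degree $0$; applying $R\Psi_\mathcal P$ to the filtration and chasing the resulting triangles, an easy induction on $\ell$ shows each $R\Psi_\mathcal P(\tau_\ell)$ is a sheaf sitting in $0\to R\Psi_\mathcal P(\tau_{\ell-1})\to R\Psi_\mathcal P(\tau_\ell)\to\mathcal P_{\hat t_\ell}\to0$, so $U$ is locally free of rank $r$. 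It is homogeneous because $T_a^*U\cong R\Psi_\mathcal P(\tau\otimes\mathcal P_{\hat A,a})\cong R\Psi_\mathcal P(\tau)$, a line bundle being trivial on the punctual $\mathrm{Supp}\,\tau$. If $\mathrm{Supp}\,\tau=\{\hat 0\}$, every graded piece above is $\mathcal P_{\hat 0}=\mathcal O_A$, which is the asserted unipotent filtration.

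For (2) the projection formula gives $V^0(\pi_*\mathcal O_{A_1})=\{\hat t:\pi^*\mathcal P_{\hat t}\cong\mathcal O_{A_1}\}$, which is exactly the finite set $\ker(\pi^*)$; call these points $\hat t_i$. The bundle $\pi_*\mathcal O_{A_1}$ is homogeneous ($T_a^*\pi_*\mathcal O_{A_1}=\pi_*(T_{a'}^*\mathcal O_{A_1})=\pi_*\mathcal O_{A_1}$ for $a'\in\pi^{-1}(a)$), so by (1) it equals $R\Psi_\mathcal P(\tau)$ with $\tau$ punctual, and by Theorem \ref{Mu}(1), $\tau\cong(-1)_{\hat A}^*R\Phi_{\mathcal P_A}(\pi_*\mathcal O_{A_1})[d]$. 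I would then run flat base change along $\pi\times\mathrm{id}_{\hat A}\colon A_1\times\hat A\to A\times\hat A$, using the Poincar\'e-bundle identity $(\pi\times\mathrm{id})^*\mathcal P_A\cong(\mathrm{id}\times\pi^*)^*\mathcal P_{A_1}$, to obtain $R\Phi_{\mathcal P_A}(\pi_*\mathcal O_{A_1})\cong(\pi^*)^*R\Phi_{\mathcal P_{A_1}}(\mathcal O_{A_1})$; and $R\Phi_{\mathcal P_{A_1}}(\mathcal O_{A_1})\cong k(\hat 0)[-d]$ since $\mathcal O_{A_1}=R\Psi_\mathcal P(k(\hat 0))$ and $R\Phi_\mathcal P\circ R\Psi_\mathcal P\cong(-1)^*_{\hat A_1}[-d]$. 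Hence $\tau\cong\mathcal O_{\ker(\pi^*)}$, and splitting this finite scheme into its connected components at the $\hat t_i$ gives $\pi_*\mathcal O_{A_1}=\bigoplus_iR\Psi_\mathcal P(\tau_i)=\bigoplus_iU_{\hat t_i}$.

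For (3), since $\pi^*\colon\hat A\to\hat A_1$ is a surjective isogeny, every $\alpha_1$ equals $\pi^*\alpha$, and by the projection formula and (2), $H^k(A_1,\pi^*\mathcal F\otimes\alpha_1)=\bigoplus_iH^k(A,\mathcal F\otimes\alpha\otimes U_{\hat t_i})$; each $U_{\hat t_i}$ is filtered with all graded quotients $\cong\mathcal P_{\hat t_i}$, so non-vanishing of the left side forces $H^k(A,\mathcal F\otimes\alpha\otimes\mathcal P_{\hat t_i})\neq0$ for some $i$, i.e.\ $\alpha\otimes\mathcal P_{\hat t_i}\in V^k(\mathcal F)$ with $\pi^*(\alpha\otimes\mathcal P_{\hat t_i})=\alpha_1$ (as $\hat t_i\in\ker\pi^*$); this is $V^k(\pi^*\mathcal F)\subseteq\pi^*V^k(\mathcal F)$, hence $V^{\geq k}(\pi^*\mathcal F)\subseteq\pi^*V^{\geq k}(\mathcal F)$ after taking unions. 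For the reverse, given $\beta$ with $H^j(A,\mathcal F\otimes\beta)\neq0$ for some $j\geq k$, let $j_0$ be the largest degree in which $H^\bullet(A,\mathcal F\otimes\beta)$ is non-zero (so $j_0\geq k$); filtering the unipotent summand $U_{\hat 0}$ of $\pi_*\mathcal O_{A_1}$ and using that $H^{>j_0}(A,\mathcal F\otimes\beta)=0$ kills all error terms, an induction on the filtration length produces a surjection $H^{j_0}(A,\mathcal F\otimes\beta\otimes U_{\hat 0})\twoheadrightarrow H^{j_0}(A,\mathcal F\otimes\beta)\neq0$, so $H^{j_0}(A_1,\pi^*\mathcal F\otimes\pi^*\beta)\neq0$, i.e.\ $\pi^*\beta\in V^{\geq k}(\pi^*\mathcal F)$; the ``in particular'' is the case $\beta=\hat 0$, $k=1$. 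Finally $\pi^*V^0(\mathcal F)\subseteq V^0(\pi^*\mathcal F)$ because pullback along the faithfully flat $\pi$ sends a non-zero section to a non-zero section, giving the stated equality on $V^0$.

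For (4), the same base-change argument as in (2), applied to $R\Psi$, gives for any isogeny $\mu\colon B\to A$ with dual $\widehat\mu\colon\hat A\to\hat B$ that $\mu^*R\Psi_{\mathcal P_A}(\mathcal H)\cong R\Psi_{\mathcal P_B}(\widehat\mu_*\mathcal H)$; I would then pick $c\gg0$ so that $p^c$ exceeds the nilpotency order of the maximal ideals at all points of $\mathrm{Supp}\,\tau$ and take $\widehat\mu$ to be the $c$-th relative Frobenius $\hat A\to\hat A^{(p^c)}$, whose dual isogeny $\mu\colon B\to A$ exists. Since the comorphism of a $c$-fold Frobenius carries each maximal ideal into the ideal generated by $p^c$-th powers, and that ideal annihilates $\tau$, the sheaf $\widehat\mu_*\tau$ is annihilated by the maximal ideals of its support, i.e.\ $\widehat\mu_*\tau\cong\bigoplus_ik(\hat s_i)$, whence $\mu^*U\cong R\Psi_{\mathcal P_B}(\bigoplus_ik(\hat s_i))\cong\bigoplus_i\mathcal P'_{\hat s_i}$. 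Parts (1) and (2) are routine Fourier--Mukai bookkeeping, the only care being the direction of the isogeny base-change formula; I expect the main obstacle to be the reverse inclusion in (3), since the extensions in the unipotent filtration could a priori annihilate cohomology, and the only clean way around this seems to be to work in the top cohomological degree, where no cancellation is possible. The single point at which characteristic $p$ is indispensable is (4): the Frobenius twist is exactly what flattens the infinitesimal (non-reduced) part of $\tau$, which cannot be done in characteristic $0$, where homogeneous bundles are only iterated extensions of degree-$0$ line bundles.
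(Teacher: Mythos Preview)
Your proposal is correct. Parts (1) and (3) track the paper's argument closely: the paper also builds the filtration on $\tau$ for (1) and, for (3), isolates the same two claims about unipotent twists (your contrapositive via the graded pieces for the forward inclusion, and your ``pass to the top nonvanishing degree $j_0$'' for the reverse inclusion) as separate sub-claims (3.1$'$) and (3.2$'$).

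Parts (2) and (4) are where you diverge. For (2) the paper does not invoke the isogeny base-change identity for $R\Phi_{\mathcal P}$; instead it observes that all $V^i(\pi_*\O_{A_1})$ equal the finite set $S_\pi=\ker(\pi^*)$, hence $\pi_*\O_{A_1}$ is a GV-sheaf, and then combines Grothendieck duality $D_A(\pi_*\O_{A_1})\cong\pi_*\O_{A_1}[d]$ with Proposition~\ref{fm-gv-sh}(3) to conclude that $R\Phi_{\mathcal P}(\pi_*\O_{A_1})[d]$ is a single sheaf supported on $S_\pi$; Mukai inversion then gives the decomposition. Your route via $(\pi\times\mathrm{id})^*\mathcal P_A\cong(\mathrm{id}\times\pi^*)^*\mathcal P_{A_1}$ and $R\Phi_{\mathcal P_{A_1}}(\O_{A_1})\cong k(\hat 0)[-d]$ reaches the same punctual object $\O_{\ker(\pi^*)}$ more directly and without appealing to the GV formalism; note however that your sentence ``so by (1) it equals $R\Psi_{\mathcal P}(\tau)$ with $\tau$ punctual'' uses the \emph{converse} of (1), which is not what (1) says---this line is superfluous since your subsequent base-change computation establishes the punctuality of $R\Phi_{\mathcal P}(\pi_*\O_{A_1})$ outright.

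For (4) the difference is more substantial. The paper reduces to a single support point, translates to $\hat 0$ to get a unipotent bundle, and then climbs the unipotent filtration one step at a time, at each step invoking Serre's ``killing cohomology'' to find an isogeny $B_i\to B_{i-1}$ that splits the relevant extension class in $H^1(\O_{B_{i-1}})$. Your argument instead applies the isogeny compatibility $\mu^*R\Psi_{\mathcal P_A}\cong R\Psi_{\mathcal P_B}\circ\hat\mu_*$ once, with $\hat\mu$ a high relative Frobenius: since the Frobenius comorphism sends the maximal ideal into $\mathfrak m^{[p^c]}\subseteq\mathfrak m^{p^c}$, choosing $p^c$ past the nilpotency order of $\tau$ makes $\hat\mu_*\tau$ a direct sum of residue fields in one stroke (here $\mu$ is the $c$-th Verschiebung). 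This is a genuine shortcut over the paper's inductive approach; the trade-off is that it leans on the Fourier--Mukai isogeny formula, which the paper does not introduce, whereas the paper's proof stays within the toolkit already assembled in Section~\ref{sec-shf-ab-var}.
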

\begin{proof}
(1) Since $\dim \mathrm{Supp} \tau = 0$, for any closed point $t \in A$ we have
$$h^i(\hat{A}, \tau \otimes \mathcal{P}_t) = 0~\mathrm{if}~i\neq 0~\mathrm{and}~h^0(\hat{A}, \tau \otimes \mathcal{P}_t) = r.$$
The first assertion of (1) follows from applying Theorem \ref{coh-res-fib}.

If moreover $\tau$ is a skyscraper sheaf supported at $\hat{0}$, then we have a filtration
$$0 = \tau_0 \subset \tau_1 \subset \tau_2 \subset \cdots \subset \tau_r = \tau$$
such that $\tau_i/\tau_{i-1} \cong k(\hat{0})$, which, by applying Fourier-Mukai transform $R\Psi_{\mathcal{P}}$, induces a filtration of $U$ as wanted.

\medskip

(2) For $\hat{t} \in \hat{A}$, we have $H^i(A_1, \pi^*\mathcal{P}_{\hat{t}}) \cong H^i(A, \pi_*\O_{A_1}\otimes \mathcal{P}_{\hat{t}})$. It follows that
$$V^0(\pi_*\O_{A_1}) = V^1(\pi_*\O_{A_1})  = \cdots = V^d(\pi_*\O_{A_1}) = S_{\pi}:= \{\hat{t}' \in \hat{A}| \pi^*\mathcal{P}_{\hat{t}'} \cong \O_{A_1}\}.$$
We have $S_{\pi} = \mathrm{Supp} (\mathrm{ker}~\pi^*)$, thus $\dim V^k(\pi_*\O_{A_1}) = 0$ for $0\leq k \leq d$, and $\pi_*\O_{A_1}$ is a GV-sheaf. By Grothendieck duality
$$D_A(\pi_*\O_{A_1}) \cong \pi_*D_{A_1}(\O_{A_1}) \cong \pi_*\O_{A_1}[d],$$
applying Proposition \ref{fm-gv-sh} (3) we have
$$R\Phi_{\mathcal{P}} \pi_*\O_{A_1}[d] \cong R^d\Phi_{\mathcal{P}} \pi_*\O_{A_1}~\mathrm{and}~\mathrm{Supp}R^d\Phi_{\mathcal{P}} \pi_*\O_{A_1} = S_{\pi}.$$
We can assume that $R^d\Phi_{\mathcal{P}}\pi_*\O_{A_1} = \bigoplus_{i=1}^{i=n} \tau'_i$
where every $\tau'_i$ is a skyscraper sheaf supported at some $\hat{t}'_i \in S_{\pi}$.
Applying Theorem \ref{Mu}, we have that
$$\pi_*\O_{A_1} \cong (-1)_A^*R^0\Psi_{\mathcal{P}} \bigoplus_{i=1}^{i=n}\tau'_i \cong \bigoplus_{i=1}^{i=n}R^0\Psi_{\mathcal{P}} (-1)_{\hat{A}}^*\tau'_i.$$
So we only need to set $\tau_i = (-1)_{\hat{A}}^*\tau'_i, \hat{t}_i = (-1)_{\hat{A}}^*\hat{t}'_i$ and $U_{\hat{t}_i} = R^0\Psi_{\mathcal{P}}\tau_i$.

\medskip

(3) We use the notation of (2).
For $\hat{t} \in \hat{A}$, we have
$$H^j(A_1, \pi^*(\mathcal{F}\otimes \mathcal{P}_{\hat{t}})) \cong H^j(A, \mathcal{F} \otimes \mathcal{P}_{\hat{t}}\otimes \pi_*\O_{A_1}) \cong H^j(A, \bigoplus_i\mathcal{F}\otimes \mathcal{P}_{\hat{t}} \otimes U_{\hat{t}_i}).$$
Let $\tau_i^0 = T_{\hat{t_i}}^*\tau_i$ and $U_i^0 = R^0\Psi_{\mathcal{P}} \tau_i^0$. Then $\tau_i^0$ is supported at $\hat{0}$, $U_{\hat{t}_i} \cong \mathcal{P}_{\hat{t}_i} \otimes U_i^0$ and
$$H^j(A_1, \pi^*(\mathcal{F}\otimes \mathcal{P}_{\hat{t}})) \cong \bigoplus_i H^j(A,\mathcal{F}\otimes \mathcal{P}_{\hat{t} + \hat{t}_i}\otimes U_i^0).$$

\textbf{Claim}: \emph{For a coherent sheaf $\mathcal{G}$ and a unipotent vector bundle $U$ on $A$,}

(3.1') \emph{if $H^l(A, \mathcal{G}) = 0$ then $H^l(A, \mathcal{G} \otimes U) = 0$; and}

(3.2') \emph{if $H^j(A, \mathcal{G} \otimes U) = 0$ for every $j \geq m$, then $H^j(A, \mathcal{G}) = 0$ for $j\geq m$.}

Granted the claim above, we prove assertion (3) by the following arguments.

(3.1) For $\hat{t} \in \hat{A}$, if $\pi^*\hat{t} \in V^{m}(\pi^*\mathcal{F})$, then there exists some $\hat{t}_i \in S_{\pi}$ such that $H^m(A,\mathcal{F}\otimes \mathcal{P}_{\hat{t} + \hat{t}_i}\otimes U_i^0) \neq 0$. Applying (3.1') shows that $H^m(A,\mathcal{F}\otimes \mathcal{P}_{\hat{t} + \hat{t}_i}) \neq 0$, i.e., $\hat{t} + \hat{t}_i \in V^{m}(\mathcal{F})$. Since $\pi^*\hat{t}_i = \hat{0}$ and $\pi^*: \hat{A} \to \hat{A}_1$ is an epimorphism, we see that
$$V^{m}(\pi^*\mathcal{F}) \subseteq \pi^*V^{m}(\mathcal{F}).$$

(3.2) For $\hat{t} \in \hat{A}$, if $\hat{t} \in V^{\geq m}(\mathcal{F})$ then $H^{j}(A,\mathcal{F}\otimes \mathcal{P}_{\hat{t}}) \neq 0$ for some $j\geq m$. Since $\hat{0} \in S_{\pi}$, $\pi_*\O_{A_1}$ has a unipotent direct summand $U$. Applying (3.2') shows that there exists some $j' \geq m$ such that
$H^{j'}(A,\mathcal{F}\otimes \mathcal{P}_{\hat{t}}\otimes U) \neq 0$, thus $H^{j'}(A_1, \pi^*(\mathcal{F}\otimes \mathcal{P}_{\hat{t}})) \neq 0$. Therefore,
$\pi^*V^{\geq m}(\mathcal{F}) \subseteq V^{\geq m}(\pi^*\mathcal{F})$, and the equality holds by combining with (3.1).

(3.3) To show $V^{0}(\pi^*\mathcal{F}) = \pi^*V^{0}(\mathcal{F})$, by (3.1) it suffices to show that $\pi^*V^{0}(\mathcal{F}) \subseteq V^{0}(\pi^*\mathcal{F})$, which follows from the fact that the natural map $\pi^*: H^0(A, \mathcal{F} \otimes \mathcal{P}_{\hat{t}}) \to H^0(A_1, \pi^*(\mathcal{F} \otimes \mathcal{P}_{\hat{t}}))$ is injective since $\pi$ is flat.

\emph{Proof of Claim.} Let $r = \mathrm{rk} ~U$ and $U_r = U$. By (1) we have a filtration of vector bundles
$$0 = U_0 \subset U_1 \subset U_2 \subset \cdots \subset U_r = U,$$
in turn we get the following short exact sequences
$$\begin{array}{rl}
0 \to \mathcal{G} \otimes U_{r-1} \to \mathcal{G} \otimes U_r \to \mathcal{G} \to 0 ~~&(r)\\
0 \to \mathcal{G} \otimes U_{r-2} \to \mathcal{G} \otimes U_{r-1} \to \mathcal{G} \to 0 ~~&(r-1)\\
\cdots~~&\cdots    \\
0 \to \mathcal{G} \to \mathcal{G} \otimes U_2 \to \mathcal{G} \to 0 ~~&(2).
\end{array}$$

(3.1') Taking cohomology of those short exact sequences (2, 3, $\cdots$, r), since $H^l(A, \mathcal{G}) = 0$ we can show that
$$H^l(A, \mathcal{G}\otimes U_2) = H^l(A, \mathcal{G}\otimes U_3) = \cdots = H^l(A, \mathcal{G}\otimes U_r) = 0.$$

(3.2') We will prove  $H^j(A, \mathcal{G}) = 0$ for $j \geq m$ by induction on $j$. This is trivial if $j > d$.
Assume that we have proved for some fixed $l>m$, $$H^i(A, \mathcal{G}) = 0~\mathrm{whenever}~i \geq l,$$
which, by (3.1'), implies that for $i\geq l$
$$H^i(A, \mathcal{G}\otimes U_2) = H^i(A, \mathcal{G}\otimes U_3) = \cdots = H^i(A, \mathcal{G}\otimes U_r) = 0.$$
Taking cohomology of the short exact sequence (r), by the vanishing $H^{l-1}(A, \mathcal{G}\otimes U_r) = 0$ and $H^{l}(A, \mathcal{G}\otimes U_{r-1}) = 0$, we show that
$$H^{l-1}(A, \mathcal{G}) = 0.$$
By induction, we finish the proof of this claim.
\medskip

(4) We can write that $\tau = \bigoplus_i \tau_i$ where every $\tau_i$ is a sheaf supported at one certain closed point $\hat{t}_i$. Let $U_i = R^0\Psi_{\mathcal{P}}\tau_i$.
Then $U = \bigoplus_i U_i$. We only need to show assertion (4) for a single $U_i$.

We can assume $\tau$ is supported at exactly one closed point $\hat{t} \in \hat{A}$.
Let
$$\bar{\tau} = T_{\hat{t}}^*\tau~\mathrm{and}~\bar{U} = R\Psi_{\mathcal{P}}\bar{\tau}.$$
Then $\mathrm{Supp}\bar{\tau} = \{\hat{0}\}$, $U \cong \bar{U}\otimes \mathcal{P}_{\hat{t}}$, and $\bar{U}$ is a unipotent vector bundle.
We can consider $\bar{U}$ instead and do induction on the rank. By (1) there is a filtration of vector bundles
$$0 = \bar{U}_0 \subset \bar{U}_1 \subset \bar{U}_2 \subset  \cdots \bar{U}_{i-1} \subset \bar{U}_i \subset \cdots \subset \bar{U}_r = \bar{U}.$$
Assume by induction that for some $i\leq r$ there exists an isogeny $\mu_{i-1}: B_{i-1} \to A$ such that $\mu_{i-1}^*\bar{U}_{i-1} \cong \bigoplus^{i-1} \O_{B_{i-1}}$.
Then we have the extension
$$0 \to \bigoplus^{i-1} \O_{B_{i-1}} \to \mu_{i-1}^*\bar{U}_i \to \O_{B_{i-1}} \to 0$$
which corresponds to
$$(\alpha_1, \alpha_2, \cdots, \alpha_{i-1}) \in \bigoplus^{i-1} H^1(\O_{B_{i-1}}) \cong Ext^1(\O_{B_{i-1}}, \bigoplus^{i-1} \O_{B_{i-1}}).$$
Recall ``killing cohomology'', which says that for a projective variety $X$ and $\alpha \in H^1(\mathcal{O}_X)$, there exists a morphism $\pi: Y \to X$ composed with some Frobenius iterations and \'{e}tale $\mathbb{Z}/(p)$-covers such that $\pi^*\alpha = 0$ in $H^1(\mathcal{O}_Y)$ (\cite[Prop. 12 and Sec. 9]{Se58b}). So applying ``killing cohomology'', we get a base change $\nu_{i}: B_i \to B_{i-1}$, which is an isogeny of abelian varieties such that
$$\nu_i^*\alpha_1 = \nu_i^*\alpha_2 = \cdots \nu_i^*\alpha_{i-1} = 0 \in H^1(\O_{B_i}).$$
Let $\mu_i = \mu_{i-1} \circ \nu_i: B_i \to A$. Then
$$\mu_i^*\bar{U}_i \cong \nu_i^*\mu_{i-1}^*\bar{U}_i \cong \bigoplus^i \O_{B_i}.$$
We finish the proof.
\end{proof}

\begin{lem}\label{gen-th-sh} Using Notation \ref{not}, for a coherent sheaf $\mathcal{G}$ on $A$, there exists a natural homomorphism
$$
\alpha_{\mathcal{G}}: \mathcal{G}^* = \mathcal{E}xt^0(\mathcal{G}, \mathcal{O}_A) \to (-1)_A^*R^{0}\Psi_{\mathcal{P}}R^{0}\Phi_{\mathcal{P}}D_A(\mathcal{G})
$$
with the kernel $\mathcal{K}_{\mathcal{G}} \cong (-1)_A^*R^{0}\Psi_{\mathcal{P}} (\sigma_{\leq -1}R\Phi_{\mathcal{P}}D_A(\mathcal{G}))$.
\end{lem}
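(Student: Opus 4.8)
The plan is to obtain $\alpha_{\mathcal{G}}$ from the canonical truncation triangle of $R\Phi_{\mathcal{P}}D_A(\mathcal{G})$, transported back through $R\Psi_{\mathcal{P}}$ and Mukai's inversion theorem. Write $E := R\Phi_{\mathcal{P}}D_A(\mathcal{G})$. By Proposition \ref{fm-gv-sh}(2) we have $E \in D^{[-d,0]}(A)$; in particular $\mathcal{H}^i(E) = 0$ for $i > 0$, so the truncation $\sigma_{>-1}(E)$ is the coherent sheaf $R^0\Phi_{\mathcal{P}}D_A(\mathcal{G})$ placed in degree $0$, while $\sigma_{\leq -1}(E) \in D^{[-d,-1]}(A)$. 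The truncation triangle of Section 2.6 thus reads
$$\sigma_{\leq -1}(E) \longrightarrow E \longrightarrow R^0\Phi_{\mathcal{P}}D_A(\mathcal{G}) \longrightarrow \sigma_{\leq -1}(E)[1].$$

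Next I would apply the triangulated functor $R\Psi_{\mathcal{P}}$ and then the exact autoequivalence $(-1)_A^*$ to this triangle. The middle term becomes $(-1)_A^*R\Psi_{\mathcal{P}}R\Phi_{\mathcal{P}}D_A(\mathcal{G})$, which by Theorem \ref{Mu}(1) ($R\Psi_{\mathcal{P}}\circ R\Phi_{\mathcal{P}}\cong(-1)_A^*[-d]$) together with $(-1)_A^*\circ(-1)_A^* = \mathrm{id}$ is isomorphic to $D_A(\mathcal{G})[-d] = R\mathcal{H}om_A(\mathcal{G},\omega_A) \cong R\mathcal{H}om_A(\mathcal{G},\mathcal{O}_A)$ (using $\omega_A\cong\mathcal{O}_A$). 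Hence one obtains a triangle
$$(-1)_A^*R\Psi_{\mathcal{P}}(\sigma_{\leq -1}(E)) \longrightarrow R\mathcal{H}om_A(\mathcal{G},\mathcal{O}_A) \longrightarrow (-1)_A^*R\Psi_{\mathcal{P}}R^0\Phi_{\mathcal{P}}D_A(\mathcal{G}) \longrightarrow [+1].$$

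I would then pass to the long exact sequence of cohomology sheaves. Two amplitude facts pin everything down: $R\mathcal{H}om_A(\mathcal{G},\mathcal{O}_A) \in D^{\geq 0}(A)$ with $\mathcal{H}^0 = \mathcal{G}^* = \mathcal{E}xt^0(\mathcal{G},\mathcal{O}_A)$; and, since $R^0\Phi_{\mathcal{P}}D_A(\mathcal{G})$ is a coherent sheaf and $p\colon A\times\hat A\to A$ has $d$-dimensional fibers, the third term $(-1)_A^*R\Psi_{\mathcal{P}}R^0\Phi_{\mathcal{P}}D_A(\mathcal{G})$ lies in $D^{[0,d]}(A)$, so its $\mathcal{H}^{-1}$ vanishes. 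Therefore the low-degree part of the long exact sequence reads
$$0 \to \mathcal{H}^0\big((-1)_A^*R\Psi_{\mathcal{P}}(\sigma_{\leq -1}(E))\big) \to \mathcal{G}^* \xrightarrow{\ \alpha_{\mathcal{G}}\ } (-1)_A^*R^0\Psi_{\mathcal{P}}R^0\Phi_{\mathcal{P}}D_A(\mathcal{G}) \to \cdots,$$
which exhibits the natural homomorphism $\alpha_{\mathcal{G}}$ and identifies $\ker\alpha_{\mathcal{G}}$ with $\mathcal{H}^0\big((-1)_A^*R\Psi_{\mathcal{P}}(\sigma_{\leq -1}(E))\big) = (-1)_A^*R^0\Psi_{\mathcal{P}}(\sigma_{\leq -1}R\Phi_{\mathcal{P}}D_A(\mathcal{G}))$, as required. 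Naturality in $\mathcal{G}$ is automatic, since $D_A(-)$, the Fourier--Mukai functors, the canonical truncations, and the Mukai isomorphism are all functorial.

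The only genuinely delicate point is bookkeeping: carrying the shift $[-d]$ and the sign automorphism $(-1)_A^*$ correctly through Mukai's inversion so that the middle term really is $R\mathcal{H}om_A(\mathcal{G},\mathcal{O}_A)$ (and not a shift of it), and verifying the two amplitude bounds above. These bounds are exactly what make $\sigma_{>-1}(E)$ collapse to a single sheaf and force the $\mathcal{H}^{-1}$ of the third term to vanish, which in turn is what makes the kernel equal to the claimed $R^0\Psi_{\mathcal{P}}$-transform rather than merely a subquotient of it. No positive-characteristic input is needed here; the lemma is purely formal.
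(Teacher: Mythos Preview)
Your proof is correct and follows essentially the same approach as the paper: apply $(-1)_A^*R\Psi_{\mathcal P}$ to the truncation triangle of $R\Phi_{\mathcal P}D_A(\mathcal G)$, identify the middle term via Mukai's inversion, and use the vanishing $R^{-1}\Psi_{\mathcal P}$ of a sheaf $=0$ to obtain the exact sequence identifying the kernel. Your write-up is in fact a bit more explicit about the amplitude bookkeeping than the paper's, but the argument is the same.
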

\begin{proof}
Apply $(-1)_A^*R\Psi_{\mathcal{P}}$ to the following triangle
\begin{align*}
\sigma_{\leq -1}R\Phi_{\mathcal{P}}D_A(\mathcal{G}) \to R\Phi_{\mathcal{P}}D_A(\mathcal{G}) \to \sigma_{> -1}R\Phi_{\mathcal{P}}D_A(\mathcal{G})
\to \sigma_{\leq -1}R\Phi_{\mathcal{P}}D_A(\mathcal{G})[1]
\end{align*}
and take cohomology.
Since $\sigma_{> -1}R\Phi_{\mathcal{P}}D_A(\mathcal{G}) \cong R^0\Phi_{\mathcal{P}}D_A(\mathcal{G})$ (Proposition \ref{fm-gv-sh} (2)), we get the following exact sequence
\begin{align*}
(-1)_A^*R^{-1}\Psi_{\mathcal{P}}R^0\Phi_{\mathcal{P}}D_A(\mathcal{G}) \to &(-1)_A^*R^{0}\Psi_{\mathcal{P}}(\sigma_{\leq -1}R\Phi_{\mathcal{P}}D_A(\mathcal{G})) \\
&\to (-1)_A^*R^{0}\Psi_{\mathcal{P}}R\Phi_{\mathcal{P}}D_A(\mathcal{G}) \xrightarrow{\alpha'_{\mathcal{G}}} (-1)_A^*R^{0}\Psi_{\mathcal{P}}R^0\Phi_{\mathcal{P}}D_A(\mathcal{G}).
\end{align*}
Applying Theorem \ref{Mu}, we have an isomorphism
$$\mathcal{E}xt^0(\mathcal{G}, \mathcal{O}_A) \cong (-1)_A^*\mathcal{H}^{0}(R\Psi_{\mathcal{P}}R\Phi_{\mathcal{P}}D_A(\mathcal{G})),$$
then we get the homomorphism $\alpha_{\mathcal{G}}$ by composing $\alpha'_{\mathcal{G}}$ with this isomorphism.
Since $R^{-1}\Psi_{\mathcal{P}}R^0\Phi_{\mathcal{P}}D_A(\mathcal{G}) =0$, it follows that the kernel of $\alpha_{\mathcal{G}}$
$$\mathcal{K}_{\mathcal{G}} \cong (-1)_A^*R^{0}\Psi_{\mathcal{P}} (\sigma_{\leq -1}R\Phi_{\mathcal{P}}D_A(\mathcal{G})).$$
\end{proof}

Let us prove the main theorem of this section.
\begin{thm}\label{CGG} Using Notation \ref{not}, let $\mathcal{F}= \mathcal{F}_0, \mathcal{F}_1, \cdots, \mathcal{F}_d$ be torsion free coherent sheaves on $A$ equipped with homomorphisms $\phi_e: \mathcal{F}_e \to \mathcal{F}_{e-1}, e=1,2,\cdots, d$. Let $\varphi_e = \phi_e \circ \phi_{e-1}\circ\cdots \circ \phi_1: \mathcal{F}_e \to \mathcal{F}$. And let $H_l, l=0,1,\cdots, d-1$ be ample line bundles on $\hat{A}$. Assume that
\begin{itemize}
\item[(a)]
for $0\leq l \leq d-1$ and every $i$, the sheaf $R^i\Phi_{\mathcal{P}}D_A(\mathcal{F}_l) \otimes H_l$ is globally generated, and if $j>0$ then $H^j(\hat{A}, R^i\Phi_{\mathcal{P}}D_A(\mathcal{F}_l) \otimes H_l) = 0$;
\item[(b)]
for $0\leq l < m \leq d$, if $j>0$ then $H^j(A, \mathcal{F}_m\otimes \hat{H_l}^*) = 0$; and
\item[(c)]
the dual homomorphism $\mathcal{F}^* \to \mathcal{F}_d^*$ of $\varphi_d$ is injective.
\end{itemize}
Then
\begin{itemize}
\item[(i)]
the homomorphism $\alpha_{\mathcal{F}}: \mathcal{F}^{*} \to (-1)_A^*R^{0}\Psi_{\mathcal{P}}R^{0}\Phi_{\mathcal{P}}D_A(\mathcal{F})$ $($introduced in Lemma \ref{gen-th-sh}$)$ is injective; and
\item[(ii)]
if moreover $\mathrm{char}~k = p>0$ and $R^0\Phi_{\mathcal{P}}D_A(\mathcal{F}) = \tau$ is supported at finitely many closed points, then there exist an isogeny $\pi: A_1\to A$ of abelian varieties, some $P_i \in \mathrm{Pic}^0(A_1)$ and a generically surjective homomorphism
$$\beta_{\mathcal{F}}: \bigoplus_iP_i \to \pi^*\mathcal{F}.$$
\end{itemize}
\end{thm}
\begin{proof}
The maps $\varphi_e: \mathcal{F}_e \to \mathcal{F}$ induce $\varphi_e^*: D_A(\mathcal{F}) \to D_A(\mathcal{F}_e)$ by taking dual, then applying $R\Phi_{\mathcal{P}}$ induces natural homomorphisms
$$R\Phi_{\mathcal{P}}D_A(\mathcal{F}) \to R\Phi_{\mathcal{P}}D_A(\mathcal{F}_e).$$

We have the following lemma with the proof postponed.
\begin{lem}\label{zeromap}
The natural homomorphism
$$R^0\Psi_{\mathcal{P}}(\sigma_{\leq -1}R\Phi_{\mathcal{P}}D_A(\mathcal{F})) \to R^0\Psi_{\mathcal{P}}(\sigma_{\leq -1}R\Phi_{\mathcal{P}}D_A(\mathcal{F}_d))$$
is zero.
\end{lem}

\medskip

(i) Applying Lemma \ref{gen-th-sh}, we have the following commutative diagram
$$\xymatrix{
&0 \ar[r] &\mathcal{K}_{\mathcal{F}}\ar[r]\ar[d] &\mathcal{F}^*\ar[r]^>>>>{\alpha_{\mathcal{F}}}\ar[d]  &(-1)_A^*R^{0}\Psi_{\mathcal{P}}R^{0}\Phi_{\mathcal{P}}D_A(\mathcal{F})\ar[d] \\
&0 \ar[r] &\mathcal{K}_{\mathcal{F}_d}\ar[r] &\mathcal{F}_d^*\ar[r]^>>>>{\alpha_{\mathcal{F}_d}} &(-1)_A^*R^{0}\Psi_{\mathcal{P}}R^{0}\Phi_{\mathcal{P}}D_A(\mathcal{F}_d),
}$$
and the vertical map $\mathcal{K}_{\mathcal{F}} \to \mathcal{K}_{\mathcal{F}_d}$ is zero by Lemma \ref{zeromap}.  Then from the condition (c), we conclude
$\mathcal{K}_{\mathcal{F}} = 0$, thus $\alpha_{\mathcal{F}}$ is injective.

\medskip

(ii) Let $U = (-1)_A^*R^0\Psi_{\mathcal{P}}\tau$. Then by Theorem \ref{pf-OA} (1) and (4), the sheaf $U$ is locally free, and there exists an isogeny $\pi: A_1\to A$ of abelian varieties such that
$$\pi^*U \cong \bigoplus_i Q_i~\mathrm{for}~\mathrm{some}~Q_i \in \mathrm{Pic}^0(A_1).$$
Applying $(-1)_A^*R\Psi_{\mathcal{P}}$ to the natural homomorphism $R\Phi_{\mathcal{P}}D_A(\mathcal{F}) \to \tau$ induces a homomorphism
$$\gamma: R\mathcal{H}om(\mathcal{F}, \O_A) \cong D_A(\mathcal{F})[-d] \to U,$$
then applying $R\mathcal{H}om(\cdot, \mathcal{O}_{A})$ induces
$$\gamma^*: U^* \to \mathcal{F}.$$
Since by (i) the homomorphism $\alpha_{\mathcal{F}}: \mathcal{F}^* \to U$ is injective, $\gamma^*$ is generically surjective. It follows that the pull-back homomorphism via $\pi$
$$\beta_{\mathcal{F}}: \pi^*U^* \to \pi^*\mathcal{F}$$
is surjective over the generic point of $A_1$. So we are done by setting $P_i = Q_i^*$.

\medskip

\emph{Proof of Lemma \ref{zeromap}.}  We divide the proof into two steps.
\medskip

Step 1: We prove that
\emph{for fixed $0\leq l < m \leq d$ and any $s\leq -1$, the natural map $R^{s}\Phi_{\mathcal{P}}D_A(\mathcal{F}_l) \to R^{s}\Phi_{\mathcal{P}}D_A(\mathcal{F}_m)$ is zero.}

Consider the spectral sequence
$$E_{2, \mathcal{F}_l}^{r,s}:= R^r\Gamma(R^{s}\Phi_{\mathcal{P}}D_A(\mathcal{F}_l)\otimes H_l) \Rightarrow R^{r+s}\Gamma(R\Phi_{\mathcal{P}}D_A(\mathcal{F}_l)\otimes H_l).$$
By the condition (a) and Proposition \ref{fm-gv-sh} (2), we see that
$$E_{\infty, \mathcal{F}_l}^{r,s} \cong E_{2, \mathcal{F}_l}^{r,s} =0~\mathrm{unless}~r= 0~\mathrm{and}~-d\leq s \leq 0,$$
thus for $-d\leq s \leq 0$, the following natural homomorphism is an isomorphism
$$
\gamma_{\mathcal{F}_l}^s: R^{s}\Gamma(R\Phi_{\mathcal{P}}D_A(\mathcal{F}_l)\otimes H_l) \to E_{\infty, \mathcal{F}_l}^{0,s} \hookrightarrow E_{2, \mathcal{F}_l}^{0,s} = H^0(R^s\Phi_{\mathcal{P}}D_A(\mathcal{F}_l)\otimes H_l).
$$
Then consider the spectral sequence
$$E_{2, \mathcal{F}_m}^{r,s}:= R^r\Gamma(R^{s}\Phi_{\mathcal{P}}D_A(\mathcal{F}_m)\otimes H_l) \Rightarrow R^{r+s}\Gamma(R\Phi_{\mathcal{P}}D_A(\mathcal{F}_m)\otimes H_l).$$
Since $E_{2, \mathcal{F}_m}^{r,s} = 0$ whenever $r < 0$, we get a natural homomorphism
$$\gamma_{\mathcal{F}_k}^s: R^{s}\Gamma(R\Phi_{\mathcal{P}}D_A(\mathcal{F}_m)\otimes H_l)) \to E_{\infty, \mathcal{F}_m}^{0,s} \hookrightarrow E_{2, \mathcal{F}_m}^{0,s} \cong H^0(R^{s}\Phi_{\mathcal{P}}D_A(\mathcal{F}_m)\otimes H_l).$$
For $i> 0$, by the condition (b) and the isomorphism $R^{-i}\Gamma(R\Phi_{\mathcal{P}}D_A(\mathcal{F}_m)\otimes H_l) \cong H^{i}(\mathcal{F}_m\otimes \hat{H_l}^*)^* = 0$ in Proposition \ref{fm-gv-sh} (1), we see that the following natural map is zero
$$\beta^{-i}: R^{-i}\Gamma(R\Phi_{\mathcal{P}}D_A(\mathcal{F}_l)\otimes H_l) \to R^{-i}\Gamma(R\Phi_{\mathcal{P}}D_A(\mathcal{F}_m)\otimes H_l) = 0.$$
Chasing in the following commutative diagram
$$\xymatrix{
&R^{s}\Gamma(R\Phi_{\mathcal{P}}D_A(\mathcal{F}_l)\otimes H_l) \ar[r]^{\gamma_{\mathcal{F}_l}^s}_{\cong}\ar[d]^{\beta^{s}} &H^0(R^s\Phi_{\mathcal{P}}D_A(\mathcal{F}_l)\otimes H_l)\ar[d]^{\bar{\beta}^{s}}  \\
&R^{s}\Gamma(R\Phi_{\mathcal{P}}D_A(\mathcal{F}_m)\otimes H_l)) \ar[r]^{\gamma_{\mathcal{F}_m}^s} &H^0(R^{s}\Phi_{\mathcal{P}}D_A(\mathcal{F}_m)\otimes H_l) \\
}$$
we can show that for $s \leq -1$, the map $\bar{\beta}^{s}$ is zero, consequently the map $R^{s}\Phi_{\mathcal{P}}D_A(\mathcal{F}_l) \to R^{s}\Phi_{\mathcal{P}}D_A(\mathcal{F}_m)$ is zero by the condition (a).
\medskip

To ease the notation, we denote $\mathcal{E}_m = \sigma_{\leq -1}R\Phi_{\mathcal{P}}D_A(\mathcal{F}_m)$ for $m=0,1,\cdots, d$. Then $\mathcal{E}_m \in D^{[-d,-1]}(\hat{A})$ by Proposition \ref{fm-gv-sh} (2). And since for every $i$,  $\mathcal{H}^i(\mathcal{E}_m) \to \mathcal{H}^i(\mathcal{E}_{m+1})$ is a zero map, we can show that for every $p$, the homomorphism $R^p\Psi_{\mathcal{P}}(\mathcal{H}^i(\mathcal{E}_m)) \to R^p\Psi_{\mathcal{P}}(\mathcal{H}^i(\mathcal{E}_{m+1}))$ is zero.
\medskip

Step 2: We prove that \emph{for $0< m \leq d$, the natural homomorphism
$$\alpha_k: R^0\Psi_{\mathcal{P}}(\sigma_{> -m -1}\mathcal{E}_0) \to R^0\Psi_{\mathcal{P}}(\sigma_{> -m -1}\mathcal{E}_m)$$
is zero}, which completes the proof of Lemma \ref{zeromap} if setting $m=d$.

We prove this assertion by induction. First note that for every $m$ and $0\leq i \leq d$, the object $\sigma_{\leq -m}(\sigma_{> -m -1}\mathcal{E}_i) \in D^b(\hat{A})$ is quasi-isomorphic to $\mathcal{H}^{-m}(\mathcal{E}_i)[k]$. When $m =1$, the map $\alpha_1: R^0\Psi_{\mathcal{P}}(\sigma_{> -2}\mathcal{E}_0) \to R^0\Psi_{\mathcal{P}}(\sigma_{> -2}\mathcal{E}_1)$ coincides with the map $R^{1}\Psi_{\mathcal{P}}(\mathcal{H}^{-1}(\mathcal{E}_0)) \to R^{1}\Psi_{\mathcal{P}}(\mathcal{H}^{-1}(\mathcal{E}_1))$, hence it is zero. Now assume that $\alpha_m$ is a zero map for some $m<d$. To prove $\alpha_{m+1}$ is zero, we consider the following commutative diagram
$$\xymatrix@C=0.5cm@R=0.5cm{
&\mathcal{H}^{-m-1}(\mathcal{E}_0)[m+1]\ar[r]\ar[d]  &\sigma_{> -m -2}\mathcal{E}_0\ar[r]\ar[d]
& \sigma_{> -m -1}\mathcal{E}_0\ar[d]\ar[r] &\mathcal{H}^{-m-1}(\mathcal{E}_0)[m+2]\ar[d]\\
&\mathcal{H}^{-m-1}(\mathcal{E}_m)[m+1]\ar[r]\ar[d]  &\sigma_{> -m -2}\mathcal{E}_m\ar[r]\ar[d]
& \sigma_{> -m -1}\mathcal{E}_m\ar[d]\ar[r] &\mathcal{H}^{-m-1}(\mathcal{E}_m)[m+2]\ar[d]\\
&\mathcal{H}^{-m-1}(\mathcal{E}_{m+1})[m+1]\ar[r]  &\sigma_{> -m -2}\mathcal{E}_{m+1}\ar[r]
& \sigma_{> -m -1}\mathcal{E}_{m+1}\ar[r]&\mathcal{H}^{-m-1}(\mathcal{E}_{m+1})[m+2]\\
}
$$
where the horizontal sequences are triangles.
Applying the right derived functor $R\Psi_{\mathcal{P}}$ to the above diagram induces
$$\xymatrix@C=0.6cm@R=0.6cm{
&R^{m+1}\Psi_{\mathcal{P}}(\mathcal{H}^{-m-1}(\mathcal{E}_0))\ar[r]\ar[d]^{\gamma_m}  &R^{0}\Psi_{\mathcal{P}}(\sigma_{> -m -2}\mathcal{E}_0)\ar[r]\ar[d]^{\beta_m}
& R^{0}\Psi_{\mathcal{P}}(\sigma_{> -m -1}\mathcal{E}_0)\ar[d]^{\alpha_m}\\
&R^{m+1}\Psi_{\mathcal{P}}(\mathcal{H}^{-m-1}(\mathcal{E}_m))\ar[r]^>>>>{\mu_m}\ar[d]^{\delta_m}  &R^{0}\Psi_{\mathcal{P}}(\sigma_{> -m -2}\mathcal{E}_m)\ar[r]\ar[d]^{\beta'_m}
& R^{0}\Psi_{\mathcal{P}}(\sigma_{> -m -1}\mathcal{E}_m)\ar[d]\\
&R^{mk+1}\Psi_{\mathcal{P}}(\mathcal{H}^{-m-1}(\mathcal{E}_{m+1}))\ar[r]  &R^{0}\Psi_{\mathcal{P}}(\sigma_{> -m -2}\mathcal{E}_{m+1})\ar[r]
& R^{0}\Psi_{\mathcal{P}}(\sigma_{> -m -1}\mathcal{E}_{m+1})\\
}
$$
where the horizontal sequences are exact. Since $\alpha_m$ is assumed to be a zero map, we have $\mathrm{Im}(\beta_m) \subseteq \mathrm{Im}(\mu_m)$. And since $\delta_m$ is zero, from the above commutative diagram we can conclude $\mathrm{Im}(\alpha_{m+1} = \beta'_m \circ \beta_m) \subseteq \mathrm{Im}(\beta'_m \circ \mu_m) =0$. Therefore, $\alpha_{m+1}$ is a zero map, and the proof is completed.
\end{proof}

\section{Subadditivity of Kodaira dimensions}\label{sec-pf-Iitaka}
In this section, we work over an algebraically closed field $k$ with $\mathrm{char}~k = p >5$. We will prove the following result on subadditivity of Kodaira dimensions.

\begin{thm}\label{Iit-conj}
Let $f: X \to Y$ be a fibration from a $\Q$-factorial projective 3-fold to a smooth projective variety of dimension $1$ or $2$. Let $B$ be an effective $\Q$-divisor on $X$ such that $(X,B)$ is klt.
Assume that $Y$ is of maximal Albanese dimension, and  and assume moreover that
\begin{itemize}
\item[$\spadesuit$]
if $\kappa(X_{\eta}, K_{X_{\eta}} + B_{\eta}) = \dim X/Y - 1$, then $B$ does not intersect the generic fiber $X_{\xi}$ of the relative Iitaka fibration
$I: X \dashrightarrow Z$ induced by $K_X +B$ on $X$ over $Y$.
\end{itemize}
Then
$$\kappa(X, K_X + B) \geq \kappa(X_{\eta}, K_{X_{\eta}} + B_{\eta}) + \kappa(Y).$$
\end{thm}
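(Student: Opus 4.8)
The plan is to set $\kappa_\eta:=\kappa(X_\eta,K_{X_\eta}+B_\eta)$ (we may assume $\kappa_\eta\geq 0$, else the right hand side is $-\infty$), pass to a relative log minimal model over $Y$ by Theorem \ref{rel-mmp}(1) — this changes neither $\kappa(X,K_X+B)$ nor $(K_X+B)_\eta$, and, since the contracted divisors are $(K_X+B)$-negative hence disjoint from the generic fibre of the relative Iitaka fibration, it preserves $\spadesuit$ — so that $K_X+B$ is $f$-nef and $(K_X+B)_\eta$ is semi-ample (Theorem \ref{rel-mmp}(3.2)); and to reduce to $f$ separable using Proposition \ref{red-to-sep} when $\dim Y=2$ and a base change by the relative Frobenius of $Y$ when $\dim Y=1$, after which one only retains that $K_X+B$ is nef, which suffices for all the tools below. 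One then inducts on $\dim Y$ together with an internal case split, reducing every situation to one of: a base $Y$ with $K_Y$ big, handled by the positivity results of \cite{Zh16a}; or a base which is an elliptic curve, a simple abelian surface, or a product of elliptic curves, handled by Theorems \ref{1} and \ref{2} together with abundance for surfaces.

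The first internal split is into (I) $K_X+B$ is $f$-big, and (II) $\kappa_\eta=0$. Since $\dim X/Y\leq 2$, the only remaining value $1\leq\kappa_\eta\leq\dim X/Y-1$ is $\dim Y=1,\ \kappa_\eta=1$; there $\spadesuit$ together with Proposition \ref{fib-pa=1} (where $p>5$ is essential) shows that the relative Iitaka fibration $I\colon X\dashrightarrow Z$ over $Y$ is, after flattening and relative minimalisation, an elliptic fibration whose canonical bundle formula \cite[3.2]{CZ15} has effective discriminant; as $B$ is $I$-vertical, pushing it down and applying Theorem \ref{ct} reduces this case to a surface pair $(Z,\Delta)$ with $K_Z+\Delta$ big over the curve $Y$, i.e. to case (I) in one lower dimension. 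So it remains to treat (I) and (II).

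In (I), if $\kappa(Y)=\dim Y$ this is $C_{3,m}$ with $f$-big divisor and big base, which is \cite{Zh16a}. If $\kappa(Y)<\dim Y$, the maximal Albanese dimension of $Y$ produces a fibration of $Y$ onto a lower-dimensional abelian variety, or onto a curve of genus $\geq 1$, with general fibre of Kodaira dimension $0$; composing with $f$ and applying $C_{2,1}$ (abundance for surfaces and positivity of pushforwards) to the induced surface fibration either lowers $\dim Y$ or replaces the base by an elliptic curve or a simple abelian surface, where Theorem \ref{1} gives $\kappa(X,K_X+B)\geq 1$. When the base is an elliptic curve (fibres being surfaces) this is upgraded to $\kappa(X,K_X+B)\geq 2$: the Iitaka fibration of $K_X+B$ cannot have $1$-dimensional image, since a general fibre of it would be a surface dominating the base elliptic curve and carrying the $f$-big divisor $K_X+B$ as a pullback from a curve. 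In (II), $(K_X+B)|_{X_\eta}\sim_{\Q}0$, so for divisible $n$ the sheaf $f_*\O_X(n(K_X+B))$ has rank $1$ and equals $P_n\otimes\omega_Y^{\otimes n}$ with $P_n$ pseudo-effective by the positivity of \cite{Zh16a}, whence $\kappa(X,K_X+B)\geq\kappa(Y,P_n\otimes\omega_Y^{\otimes n})$; this finishes the case $\kappa(Y)=\dim Y$, and for $\kappa(Y)<\dim Y$ it reduces, as in (I), to an elliptic-curve base — precisely Theorem \ref{2} — or to a simple abelian surface, where $P_n$ is nef and the only remaining point, when $P_n\in\Pic^0$, is to show $P_n$ is torsion, which is carried out as in the proof of Theorem \ref{1} via the Fourier–Mukai analysis of Section \ref{sec-shf-ab-var} (Theorems \ref{pf-OA}, \ref{CGG}), the trace maps of relative Frobenius, and abundance for surfaces.

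The main obstacle is uniform: whenever one has reduced to a base that is an elliptic curve or a simple abelian variety and $K_X+B$ carries no relative bigness, the argument must prove that a specific line bundle in $\Pic^0$ of the base is torsion. This is not visible from weak positivity and is exactly what the cohomological-support-locus / Fourier–Mukai machinery of Section \ref{sec-shf-ab-var}, combined with trace maps of relative Frobenius iterations and log abundance for surfaces (\cite{Ta14,Ta15b}), is designed to establish. It is also the reason the hypothesis $\spadesuit$ is imposed: only for boundaries avoiding the generic fibre of the relative Iitaka fibration can the characteristic-$p$ canonical bundle formula be applied with an effective discriminant, which is what makes the dimension reduction in the intermediate case possible.
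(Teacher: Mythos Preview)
Your overall strategy mirrors the paper's: pass to a relative log minimal model, separate into the $f$-big case, the $\kappa_\eta=0$ case, and the intermediate $\dim Y=1$, $\kappa_\eta=1$ case, and dispatch these using Theorems \ref{fib-to-ab-var} and \ref{fib-to-curve}, Corollary \ref{cor-Zh16a}, Proposition \ref{red-to-sep}, and the canonical bundle formula. Case (I) and the middle case are essentially what the paper does.

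The genuine gap is case (II) when $\dim Y=2$. Here $\kappa_\eta=0=\dim X/Y-1$, so $\spadesuit$ applies---yet you never invoke it. Instead you write $f_*\O_X(n(K_X+B))\cong P_n\otimes\omega_Y^{\otimes n}$ with $P_n$ pseudo-effective ``by the positivity of \cite{Zh16a}''. But Theorem \ref{thm-Zh16a} (and every positivity statement from \cite{Zh16a} used in this paper) requires the divisor to be $f$-\emph{big}, which is exactly what fails in (II). Even granting pseudo-effectivity of $P_n$, your proposed endgame over a simple abelian surface---proving $P_n\in\Pic^0(Y)$ is torsion ``as in the proof of Theorem \ref{fib-to-ab-var}''---does not go through: that argument needs the image $\mathcal F$ of the trace map to have rank $\geq 2$, so that one can extract \emph{two} distinct effective divisors $D_1,D_2$ whose coefficients along two chosen components satisfy the strict inequalities $a_{11}>a_{21}$ and $a_{22}>a_{12}$ (see $(\clubsuit)$ in Step 3); these inequalities, combined with adjunction on the components, are what force both twists to be torsion. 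With a rank-$1$ pushforward there is only one such divisor and the comparison is vacuous. Your reduction ``as in (I)'' to an elliptic-curve base is likewise unclear here, since after composing with a fibration $Y\to C$ the new relative Kodaira dimension of $K_X+B$ over $C$ need not be $0$.

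The paper handles this case in one stroke using $\spadesuit$: it forces $B_\eta=0$, so $X_\eta$ is a regular curve of arithmetic genus $1$; by Proposition \ref{fib-pa=1} (using $p>5$ again) $X_{\bar\eta}$ is a smooth elliptic curve, so $f$ is an elliptic fibration, and \cite[Theorem 1.2]{CZ15} gives $\kappa(X,K_X+B)\geq\kappa(X)\geq\kappa(Y)$ directly, with no Fourier--Mukai input and no appeal to positivity of pushforwards. Your final paragraph identifies $\spadesuit$ as serving only the intermediate $\dim Y=1$, $\kappa_\eta=1$ case; in fact it is equally indispensable for $\dim Y=2$, $\kappa_\eta=0$.
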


To prove the theorem above, we will first treat three subcases in the following theorems, which can be seen as complements of the results of \cite{Zh16a}.

\begin{thm}\label{fib-to-ab-var}
Let $(X,B)$ be a projective $\Q$-factorial klt pair of dimension 3. Let $f: X \to Y=A$ be a fibration to an elliptic curve or a simple abelian surface. Assume that $K_X + B$ is $f$-big. Then
$$\kappa(K_X + B) \geq \kappa(X_{\eta}, K_{X_{\eta}} + B_{\eta}).$$
\end{thm}

\begin{thm}\label{fib-to-curve-k=1}
Let $(X,B)$ be a projective $\Q$-factorial klt pair of dimension 3. Let $f: X \to Y$ be a fibration to a normal curve $Y$ of genus $g(Y) \geq 1$. Assume $\kappa(X_{\eta}, (K_X + B)|_{X_{\eta}}) = 1$ and the condition $\spadesuit$ holds. Then
$$\kappa(X, K_X + B) \geq 1 + \kappa(Y).$$
\end{thm}

\begin{thm}\label{fib-to-curve}
Let $(X,B)$ be a projective $\Q$-factorial klt pair of dimension 3. Let $f: X \to Y$ be a fibration to a normal curve $Y$ of genus $g(Y) \geq 1$. Assume $\kappa(X_{\eta}, (K_X + B)|_{X_{\eta}}) = 0$. Then
$$\kappa(X, K_X + B) \geq \kappa(Y).$$
If moreover $K_X +B$ is nef then it is semi-ample.
\end{thm}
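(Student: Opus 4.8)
The plan is to follow the strategy of \cite[Theorem 8.10]{Ba01} sketched in the introduction, reducing everything to the statement that a single nef line bundle on $Y$ is torsion; the difficulty will be concentrated in one positivity step.

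\emph{Step 1 (reduction, and the easy sub-case).} Since $\kappa(X_{\eta},(K_X+B)_{\eta})=0$ the divisor $K_X+B$ is pseudo-effective over $Y$, so by Theorem \ref{rel-mmp}(1) I would replace $(X,B)$ by a relative log minimal model over $Y$; this changes neither $\kappa(X,K_X+B)$ nor the semi-ampleness of $K_X+B$ (if $K_X+B$ was already nef, nothing gets contracted because $Y$ has genus $\geq 1$). As $Y$ is a curve of genus $\geq 1$ it contains no rational curve, so Theorem \ref{rel-mmp}(3.4) upgrades nefness over $Y$ to nefness, and Theorem \ref{rel-mmp}(3.3) (a fibration to a curve with $\kappa(X_{\eta},(K_X+B)_{\eta})=0$) shows $K_X+B$ is semi-ample over $Y$; hence $n(K_X+B)\sim f^*L$ for a sufficiently divisible $n$ and a nef line bundle $L$ on $Y$. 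If $\deg L>0$ then $L$ is ample and $K_X+B$ is semi-ample with $\kappa(X,K_X+B)=1$, so from now on I assume $L\in\mathrm{Pic}^0(Y)$, i.e. $K_X+B$ is numerically trivial; the goal becomes to show $L$ is torsion, which yields $\kappa(X,K_X+B)=0$ and semi-ampleness simultaneously.

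\emph{Step 2 (a nef divisor of numerical dimension two --- the crux).} Fix an ample divisor $H$ on $X$ and a general fibre $F$ of $f$, so $F\cdot F\equiv 0$. I want a nef $\mathbb{Q}$-divisor $D=aH-bF$ with $\nu(D)=2$: for the ratio $\tfrac{b}{a}=\tfrac{H^3}{3\,(H^2\cdot F)}$ a direct intersection computation (using $F\cdot F\equiv 0$) gives $D^3=0$ and $D^2\cdot H>0$, and $D|_F=aH|_F$ is ample so $D$ is $f$-ample; the point to check is nefness, which holds for the right ratio by a boundedness argument on the horizontal curves of $f$ (and if the nef threshold is strictly larger, the threshold divisor is already big and a minor variant works). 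The essential input is then: there is $P\in\mathrm{Pic}^0(Y)$ with $D+f^*P\sim_{\mathbb{Q}}D'$ for some effective $\mathbb{Q}$-divisor $D'$. I expect this to be the main obstacle. One needs a nonzero section of $f_*\mathcal{O}_X(m(D+f^*P))$ for suitable $m$ and $P$; since $D^3=0$ and (as $K_X\equiv -B$) the Euler characteristic $\chi(X,mD)$ can fail to be positive when $B=0$, plain Riemann--Roch on the curve $Y$ does not suffice, and one must combine the positivity of direct images from \cite{Zh16a} with the generic-vanishing-type theory for sheaves on the abelian variety $\mathrm{Jac}(Y)$ developed in Section \ref{sec-shf-ab-var} (the curve $Y$ has maximal Albanese dimension).

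\emph{Step 3 (a fibration to a surface, and conclusion).} For a sufficiently small rational $t>0$ the pair $(X,B+tD')$ is klt, $K_X+B+tD'$ is nef, and since $K_X+B\equiv 0$ one has $K_X+B+tD'\equiv tD$, so $\nu(K_X+B+tD')=2$, and it is $f$-big. If $g(Y)=1$ then $Y$ is an elliptic curve and Theorem \ref{fib-to-ab-var} applied to $f:X\to Y$ gives directly that $K_X+B+tD'$ is semi-ample; if $g(Y)\geq 2$ I would first pass to a simple isogeny factor $A$ (an elliptic curve or a simple abelian surface) of $\mathrm{Jac}(Y)$, noting via Lemma \ref{ab-subvar} that the fibres of the induced fibration $X\to A$ are vertical over $f$, so $K_X+B+tD'$ stays relatively big and Theorem \ref{fib-to-ab-var} applies again --- this reduction is a secondary obstacle. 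Either way $K_X+B+tD'$ is semi-ample of numerical dimension $2$ and defines a fibration $g:X\to Z$ with $\dim Z=2$ and $(K_X+B+tD')|_C\sim_{\mathbb{Q}}0$ for a general fibre $C$ of $g$. Since $\dim Z=2>1=\dim Y$, the map $g$ cannot factor through $f$, so $g$ restricted to a general fibre of $f$ is dominant, hence $C$ is horizontal and $f|_C:C\to Y$ is surjective. Choosing $C$ also away from $\mathrm{Supp}\,D'$ and the singular locus, $0=\deg\big((K_X+B)|_C\big)+t\,\deg(D'|_C)$ with both terms $\geq 0$ ($K_X+B$ nef, $D'$ effective with $C\not\subseteq\mathrm{Supp}\,D'$), so $D'|_C=0$ and $(K_X+B)|_C\sim_{\mathbb{Q}}0$; thus $(f|_C)^*L=n(K_X+B)|_C\sim_{\mathbb{Q}}0$, and Lemma \ref{inj-pic} applied to $f|_C$ (after normalizing $C$) forces $L\sim_{\mathbb{Q}}0$. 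Hence $n'(K_X+B)\sim 0$ for some $n'$, giving $\kappa(X,K_X+B)=0$ and semi-ampleness; undoing the reduction of Step 1 finishes the proof. In summary, the hard part is Step 2 --- extracting the effective divisor $D'$ from the nef, $\nu=2$, $f$-ample divisor $D$ --- while the reduction to a simple abelian quotient in Step 3 is a lesser point.
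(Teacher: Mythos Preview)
Your overall strategy coincides with the paper's, but there is a genuine gap in Step~3 for $g(Y)\geq 2$. Passing to a simple isogeny factor $A$ of $\mathrm{Jac}(Y)$ does not yield a fibration as required by Theorem~\ref{fib-to-ab-var}: if $\dim A=1$ the composite $X\to Y\to A$ has disconnected fibres (the finite map $Y\to A$ has degree $>1$ since $g(Y)\geq 2$), and if $\dim A=2$ the composite is not surjective (its image is a curve). The paper instead \emph{excludes} the case $g(Y)>1$ altogether, and does so before constructing $D'$. Once the nef $f$-big divisor $D$ with $D^3=0$ is in hand, take a smooth resolution $\sigma:W\to X$ and set $h=f\circ\sigma$; Theorem~\ref{thm-Zh16a}(2) applied to $h$ and the nef, $h$-big divisor $\sigma^*D$ shows that if $K_Y$ is big then $n\sigma^*D+K_W$ is big for $n\gg 0$. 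Pushing down via Theorem~\ref{ct} and using $K_X+B\equiv 0$, one deduces $\kappa(X,nD+K_X+B)=3$, so $D$ itself is big --- contradicting $\nu(D)=2$. Hence under the standing assumption $\deg L=0$ the curve $Y$ is forced to be elliptic, and Theorem~\ref{fib-to-ab-var} applies directly in Step~3. This exclusion also streamlines Step~2: the nef subbundle $V\subset F_Y^{g*}h_*\O_W(n\sigma^*D+K_{W/Y})$ produced by Theorem~\ref{thm-Zh16a}(1) now lives on the elliptic curve $Y$ itself, where Riemann--Roch gives $\chi(V\otimes L'')=\deg V\geq 0$ and Fourier--Mukai (Theorem~\ref{Mu}) forces $h^0(V\otimes L'')>0$ for some $L''\in\Pic^0(Y)$; no appeal to $\mathrm{Jac}(Y)$ is needed.

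A secondary point: your nefness argument for $D$ (``boundedness on horizontal curves'') is too vague to evaluate. The paper proceeds via an auxiliary lemma (Lemma~\ref{pd-nef}): since $K_X+B\equiv 0$, any pseudo-effective $f$-nef $\Q$-Cartier divisor on $X$ is automatically nef (add a small multiple of an effective representative to $B$ and invoke Theorem~\ref{rel-mmp}(3.4)). Pseudo-effectivity of $D=aH-bF$ is then shown by contradiction: if $t_0=\sup\{t\in\R:H-tF\text{ pseudo-effective}\}<b/a$, then $(H-t_0F)^3>0$ and, by the lemma applied to rational $t<t_0$, $H-t_0F$ is nef, hence big by \cite[Chap.~V, Lemma~2.1]{Nak04}, contradicting the maximality of $t_0$.
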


We remark that Theorem \ref{fib-to-curve-k=1} is a generalization of \cite[Theorem 1.2, the subcase $\dim Y =1$ and $\kappa(X_{\eta})=1$]{EZ16} where the cases without boundary were treated. Since the condition $\spadesuit$ is assumed, we can adapt the arguments of \cite[Sec. 4]{EZ16} to our situation. For the convenience of the reader, we will provide a detailed proof.

\subsection{Preparations}
First let us recall an invariant introduced by Ejiri \cite[Sec.4]{Ej15} to measure the positivity of a sheaf.
\begin{defn}
Let $Y$ be a projective variety, $\mathcal{F}$ a torsion free coherent sheaf and $H$ an ample $\mathbb{Q}$-Cartier $\mathbb{Q}$-divisor on $Y$. Let
\begin{align*}
t(Y,\mathcal{F}, H)= \sup\{a \in \mathbb{Q}|\mathrm{the~sheaf~}&(F_{Y}^{e*}\mathcal{F})\otimes\mathcal{O}_{Y}(\llcorner -p^eaH \lrcorner) \\
~&\mathrm{is~ generically~globally~generated~ for ~some ~} e>0\}.
\end{align*}
\end{defn}

We will denote $t(Y, \mathcal{F}) \geq 0$ if $t(Y, \mathcal{F}, H) \geq 0$. This property is independent of the choices of $H$ (\cite[Remark 2.13]{Zh16a}) and is stronger than weak positivity (\cite[Prop. 4.7]{Ej15}).

From the main results of \cite{Zh16a} we deduce the following theorem.

\begin{thm}\label{thm-Zh16a}
Let $f:X\rightarrow Y$ be a separable fibration between smooth projective varieties, and let $D$ be a nef and $f$-big Cartier divisor on $X$.

(1) If $D$ is $f$-semi-ample, then

(1.a) for sufficiently divisible positive integers $n$ and $g$, the sheaf
$F_Y^{g*}f_*\mathcal{O}_X(nD + K_{X/Y})$ contains a nonzero subsheaf $V_n$ with $t(Y, V_n) \geq 0$, and $\mathrm{rk}~V_n\geq c n^{\dim X/Y}$ for some $c>0$ independent of $n$; and

(1.b) for any sufficiently divisible $n>0$ and big $\Q$-divisor $H$ on $Y$, $nD + K_{X/Y} + f^*H$ is big.

(2) If $K_Y$ is big, then for any sufficiently divisible $n>0$, $nD + K_{X}$ is big.

\end{thm}
\begin{proof}
We can assume $D = A+ E$ where $A$ is $f$-ample and $E$ is effective. For an integer $n>0$ such that $nE_{\ol\eta}$ is Cartier, let $s_{n}$ be a global section of $\O_{X_{\bar{\eta}}}(nE_{\ol\eta})$ with $(s_n)_0 = nE_{\ol\eta}$. We get an inclusion $S^{0}(X_{\ol\eta}, (nA + K_X)_{\ol\eta})\hookrightarrow S^{0}(X_{\ol\eta}, (nD + K_X)_{\ol\eta})$ by tensoring with $s_{n}$. Then by applying Lemma \ref{stable-sec-lem} on $X_{\ol\eta}$ for the divisor $(nA+K_X)_{\ol\eta}$, we can show that there exists some $c>0$ such that for any sufficiently divisible $n$,
$$\dim_{k(\bar{\eta})} S^{0}(X_{\ol\eta}, (nD + K_X)_{\ol\eta}) \geq \dim_{k(\bar{\eta})} S^{0}(X_{\ol\eta}, (nA + K_X)_{\ol\eta}) \geq c n^{\dim X/Y}.$$


The assertion (1.a) follows from applying \cite[Theorem 1.11]{Zh16a}. For (1.b), fix a sufficiently divisible integer $n>0$ such that $F_Y^{g*}f_*\mathcal{O}_X(nD + K_{X/Y})$ contains a nonzero subsheaf $V_n$ with $t(Y, V_n) \geq 0$. Applying \cite[Theorem 4.1]{Zh16a} shows that
$$\kappa(X, nD + K_{X/Y} + f^* H) \geq \kappa(X_{\eta}, (nD + K_{X/Y})|_{X_{\eta}}) + \dim Y = \dim X,$$
hence $nD + K_{X/Y} + f^* H$ is big.

For (2), take an ample divisor $H'$ on $Y$ such that $K_Y - H'$ is big. We can assume $D+ f^*H' \sim_{\Q}A' + \Delta$ where $A'$ is an ample divisor and $\Delta$ is an effective divisor with index not divisible by $p$. Since
$nD + K_X - K_{X/Y} - \Delta - f^*(K_Y - H') \sim_{\Q} (n-1)D + A'$ is nef and $f$-ample, and $\dim_{k(\bar{\eta})} S^0_{\Delta_{\bar{\eta}}}(X_{\bar{\eta}}, (nD + K_X)_{\bar{\eta}}) > 0$ for sufficiently divisible $n$, we can prove (2) by applying \cite[Theorem 1.5]{Zh16a}.
\end{proof}

\begin{cor}\label{cor-Zh16a}
Let $(X,B)$ be a projective $\Q$-factorial klt pair of dimension 3. Let $f: X \to Y$ be a fibration to a smooth projective curve or surface $Y$ of maximal Albanese dimension.
Assume that $K_X + B$ is $f$-big. Then
$$\kappa_{\sigma}(X, K_X + B) \geq \kappa(X_\eta, (K_X + B)_{\eta}) + \kappa(Y).$$
In particular, if $K_Y$ is big, then $K_X + B$ is big.


\end{cor}
\begin{proof}
Let $(\bar{X}, \bar{B})$ be a log minimal model of $(X,B)$ over $\mathrm{Alb}(Y)$.
And let $\rho: \tilde{X} \to X$ be a log resolution such that the natural map $\mu: \tilde{X} \to \bar{X}$ is a morphism. Let $D = \mu^*(K_{\bar{X}}+\bar{B})$. By Theorem \ref{rel-mmp} (3.1) and (3.4), $(\bar{X}, \bar{B})$ is in fact minimal, and $D$ is a nef divisor relatively big and semi-ample over $\mathrm{Alb}(Y)$ (hence over $Y$).
By Proposition \ref{num-prop}, we only need to prove $\nu(\tilde{X}, D) \geq \dim X_\eta + \kappa(Y)$.

First we reduce to separable fibrations by the following commutative diagram
\[\xymatrix@C=2cm{&X''\ar[r]^{\nu}\ar[drr]_{f''} &X'\ar[dr]^{f'} &\tilde{X}\ar[l]_{\sigma}\ar[d]^{\tilde{f}}\ar[r]^{\mu}    &\bar{X}\ar[d]\\
& &     &Y\ar[r]^{\mathrm{alb}_Y}     &\mathrm{Alb}(Y)\\
} \]
where $\sigma: \tilde{X} \to X'$ is a purely inseparable morphism constructed in Proposition \ref{red-to-sep} such that $f': X' \to Y$ is separable, $\nu: X'' \to X'$ is a smooth resolution of singularities, and $\tilde{f}, f', f''$ denote natural induced morphisms.

Since $\sigma$ is purely inseparable, there exists $D'$ on $X'$ such that $\sigma^*D' =D$. Set $D'' = \nu^*D'$, which is a nef divisor relatively big and semi-ample over $Y$. Take a big divisor $H$ on $Y$. By Theorem \ref{thm-Zh16a}, for sufficiently divisible $n>0$, the divisor $nD'' + K_{X''} - f''^*K_Y + f''^*H$ is big. By Proposition \ref{red-to-sep}, there exist a rational number $t>0$ and an effective divisor $\Delta'$ such that $K_{\tilde{X}} \sim_{\Q} \sigma^*t(K_{X'} + \Delta')$. And we can write that $K_{X''} = \nu^*(K_{X'} + \Delta') + E'' - F''$ where $E'', F''$ are effective divisors on $X''$ and $E''$ is $\nu$-exceptional.
Applying Theorem \ref{ct}, we conclude that
\begin{equation}\label{choice-H}
\small\begin{split}
&\kappa(\tilde{X}, tnD + K_{\tilde{X}} -t\tilde{f}^*K_Y + t\tilde{f}^*H)\\
&\geq \kappa(X', t(nD' -f'^*K_Y + f'^*H) + t(K_{X'} + \Delta')) \\
&= \kappa(X'', \nu^*t(nD' -f'^*K_Y + f'^*H) + \nu^*t(K_{X'} + \Delta')  + tE'') \\
&\geq \kappa(X'', t(nD'' + K_{X''} - f''^*K_Y + f''^*H)) \geq 3.
\end{split}
\end{equation}
Since this is true for any big $\Q$-divisor $H$, and $D$ is nef, we conclude that for $q\gg0$, $qD + K_{\tilde{X}} -t\tilde{f}^*K_Y$ is pseudo-effective.

When $\kappa(Y)=0$, if $\dim X_\eta =1$ then $\nu(\tilde{X}, D) \geq \dim X_\eta$ since $D$ is nonzero and nef; if $\dim X_\eta =2$, then for a general fiber $\tilde{F}$ of $\tilde{f}$, $D^2 \cdot \tilde{F}>0$ since $D$ is nef and $\tilde{f}$-big, thus $\nu(\tilde{X}, D) \geq 2$.

When $K_Y$ is big, by setting $H = K_Y$ in Eq. (\ref{choice-H}), we obtain that $nD + K_{\tilde{X}}$ is big. As we can write that $K_{\tilde{X}} = \mu^*(K_{\bar{X}}+\bar{B}) + E -F$ where $E, F$ are effective divisors on $\tilde{X}$ and $E$ is $\mu$-exceptional, applying Theorem \ref{ct}, it follows that $(n+1)D$ is big.

It remains to consider the case $\kappa(Y) =1$ and $\dim Y=2$. Take an ample divisor $\bar{A}$ on $\bar{X}$. We only need to prove that $D^2 \cdot \mu^*\bar{A} >0$. Fix a $q\gg 0$ such that $qD + K_{\tilde{X}} -t\tilde{f}^*K_Y$ is pseudo-effective. Then $D^2 \cdot (qD + K_{\tilde{X}} -t\tilde{f}^*K_Y) \geq 0$.
And since $E$ is $\mu$-exceptional, by projection formula we have
$\mu^*(K_{\bar{X}}+\bar{B}) \cdot \mu^*\bar{A} \cdot E = 0$.
Take a general divisor $H' \in |NK_Y|$ for some sufficiently divisible $N$ and set $\tilde{H}' = \tilde{f}^*H'$. Then $\tilde{H}'$ contains a component $\tilde{H}$, such that $\mu^*\bar{A}|_{\tilde{H}}$ is semi-ample and big and that $D|_{\tilde{H}}$ is nef and $\tilde{f}|_{\tilde{H}}$-big. Therefore,
$$D\cdot \mu^*\bar{A} \cdot \tilde{f}^*K_Y = \frac{1}{N}D\cdot \mu^*\bar{A} \cdot \tilde{H}' \geq \frac{1}{N}D\cdot \mu^*\bar{A} \cdot \tilde{H} = \frac{1}{N}(D|_{\tilde{H}})\cdot (\mu^*\bar{A}|_{\tilde{H}})>0$$
where the last strict inequality is obtained by applying Hodge Index Theorem.
Finally the proof is completed by
\begin{align*}
(q+1)D^2 \cdot \mu^*\bar{A} &= D\cdot (qD + \mu^*(K_{\bar{X}}+\bar{B})) \cdot \mu^*\bar{A} =  D\cdot (qD + K_{\tilde{X}}-E  +F) \cdot \mu^*\bar{A} \\
&\geq D\cdot (qD + K_{\tilde{X}} - t\tilde{f}^*K_Y + t\tilde{f}^*K_Y) \cdot \mu^*\bar{A} >0.
\end{align*}
\end{proof}

Recall a positivity result on surfaces.
\begin{lem}\textup{(\cite[Lemma 2.11]{EZ16})}\label{positivity-surf}
Let $g:Z\to Y$ be a generically smooth fibration from a smooth projective surface to a smooth projective curve. Let $H$ be a nef and $g$-big divisor on $Z$. Then $g_*\O_Z(K_{Z/Y}+lH)$ is a nef vector bundle for every $l\gg 0$.
\end{lem}

The following result was proved by Waldron \cite{Wal15} when $\kappa =2$, and the case $\kappa =1$ follows easily from applying Theorem \ref{rel-mmp} (3.2).
\begin{thm}\textup{(\cite[Theorem 3.1]{Zh16c})}\label{s-amp-k=2}
Let $(X, B)$ be a $\Q$-factorial klt projective 3-fold. Assume that $K_X + B$ is nef. If $\kappa(X, K_X + B) \geq 1$, then $K_X + B$ is semi-ample.
\end{thm}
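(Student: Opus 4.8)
The plan is to run the Iitaka fibration of $L:=K_X+B$ and reduce the statement to relative abundance over a base $W$ of dimension $\kappa:=\kappa(X,L)$, after which positivity on $W$ will be automatic because $\kappa=\dim W$. First one disposes of the big case: since $L$ is nef, $\nu(L)=\kappa_{\sigma}(L)\geq\kappa(L)\geq 1$ by Proposition~\ref{num-prop}(1) (together with the standard inequality $\kappa_{\sigma}\geq\kappa$), so if $\nu(L)=3$ then $L$ is nef and big and the basepoint-free theorem for $\Q$-factorial klt threefolds (Theorem~\ref{rel-mmp}(3.1), with trivial base) already gives semi-ampleness. Hence we may assume $\nu(L)\in\{1,2\}$, and in particular $\kappa\in\{1,2\}$.

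Next I would pass to a suitable birational modification: choose a $\Q$-factorial klt pair $(X',B')$ with a projective birational morphism $\psi\colon X'\to X$ such that $K_{X'}+B'\sim_{\Q}\psi^*L+E$ with $E\geq 0$ and $\psi$-exceptional, and such that the Iitaka map of $L$ becomes a genuine morphism $g\colon X'\to W$ onto a smooth projective variety $W$ with connected fibres and $\dim W=\kappa$. Put $N:=\psi^*L$; then $N$ is nef, $\kappa(X',N)=\kappa(X,L)=\kappa$ by the covering theorem (Theorem~\ref{ct}), and, $g$ being a model of the Iitaka fibration, the restriction of $N$ to a general fibre $G$ of $g$ has Iitaka dimension $0$. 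Since $\psi_*\mathcal{O}_{X'}=\mathcal{O}_X$ we have $H^0(X',mN)=H^0(X,mL)$ for all $m$, and all sections of $mN$ are pulled back from $X$, so $L$ is semi-ample on $X$ if and only if $N$ is semi-ample on $X'$; it therefore suffices to prove the latter.

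The heart of the argument is to show that $N$ is relatively semi-ample over $W$ and then to identify it with a pullback. For a general fibre $G$ (of dimension $3-\kappa$), $N|_G$ is nef, effective (it is $\Q$-linearly equivalent to the restriction of the fixed part of a large multiple of $N$), and of Iitaka dimension $0$. If $\kappa=2$, then $G$ is a curve, a general such $G$ avoids the $\psi$-exceptional locus, adjunction identifies $N|_G=(K_{X'}+B')|_G=K_G+B'|_G$, and an effective divisor of Iitaka dimension $0$ on a curve is $0$, so $N|_G\sim_{\Q}0$. If $\kappa=1$, then $G$ is a normal surface, $(G,B'|_G)$ is klt by adjunction, and one must show the nef $\Q$-divisor $N|_G$ of Iitaka dimension $0$ is $\Q$-linearly trivial; this is where abundance for surfaces (\cite{Ta14,Ta15b}) is used, after running a $(K_{X'}+B')$-minimal model program over $W$ (legitimate by Theorem~\ref{rel-mmp}(1)) that leaves the generic fibre integral and of Iitaka dimension $0$ and then invoking Theorem~\ref{rel-mmp}(3.3) for the curve base. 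Granting $N|_G\sim_{\Q}0$ on a general fibre together with the nefness of $N$, a relative semi-ampleness/descent argument (controlling the vertical part of the fixed divisor and using Theorem~\ref{rel-mmp}(3.1),(3.3)) yields $N\sim_{\Q}g^*L_W$ for some $\Q$-divisor $L_W$ on $W$.

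Finally one treats the base. We have $\kappa(W,L_W)=\kappa(X',N)=\kappa=\dim W$, so $L_W$ is big; moreover $L_W$ is nef, since pulling a curve of $W$ back to a multisection in $X'$ and using the nefness of $N$ shows $L_W$ has nonnegative degree on every curve of $W$. When $\kappa=1$, $L_W$ is a big divisor on a curve, hence ample, so $g^*L_W=N$ is semi-ample and we are done. When $\kappa=2$, $L_W$ is big and nef on the surface $W$, and via the canonical bundle formula for $g$ (whose general fibres have Iitaka dimension $0$) one writes $L_W\sim_{\Q}K_W+B_W$ for a klt pair $(W,B_W)$; abundance for surfaces (\cite{Ta15b}) then shows $L_W$, hence $N$, hence $L=K_X+B$, is semi-ample. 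I expect the main obstacle to be exactly the case $\kappa=2$ (equivalently $\nu(L)=2$): establishing a version of the canonical bundle formula in characteristic $p$ in which the boundary $B_W$ may be taken effective — which is presently known only under restrictive hypotheses, cf.\ the remark following Theorem~\ref{0} — and, running alongside it, keeping the various relative minimal model programs over $W$ compatible with the Iitaka fibration and with $N=\psi^*L$; by contrast the case $\kappa=1$ is comparatively soft, since the fibres of $g$ are surfaces handled by Tanaka's abundance and the base is a curve.
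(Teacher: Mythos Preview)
The paper does not supply its own proof of this statement: it records the result as due to Waldron \cite{Wal15} and points the reader to \cite[Theorem~3.1]{Zh16c} for a short argument. So there is no in-paper proof to compare your proposal against line by line.

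Your Iitaka-fibration reduction is natural, and the cases $\kappa=3$ (basepoint-free theorem, Theorem~\ref{rel-mmp}(3.1)) and $\kappa=1$ (surface abundance on the general fibre plus ampleness of a nef big divisor on a curve) can be pushed through along the lines you sketch, modulo some bookkeeping: on the resolution $X'$ the divisor $N=\psi^*L$ differs from $K_{X'}+B'$ by an exceptional discrepancy divisor, so identifying $N|_G$ with a log canonical divisor on the general fibre $G$ and then invoking Tanaka's surface abundance needs a more careful choice of $B'$ or a preliminary relative MMP step.

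The obstacle you flag at $\kappa=2$, however, is a genuine gap and not merely a technicality. After descending to a nef and big $\Q$-divisor $L_W$ on the surface $W$, semi-ampleness of $L_W$ is not automatic in positive characteristic: by Keel's criterion one would need $L_W|_C$ to be torsion on every curve $C\subset W$ with $L_W\cdot C=0$, and there is no a~priori reason for this. Your proposed remedy, writing $L_W\sim_{\Q}K_W+B_W$ for a klt pair $(W,B_W)$ via a canonical bundle formula, is precisely what is unavailable here: the general fibres of $g$ are curves of arbitrary genus, so neither the elliptic formula \cite[3.2]{CZ15} nor the versions requiring $F$-singularity hypotheses on the generic fibre (\cite[Lemma~6.7]{CTX15}, \cite[Theorem~3.18]{Ej15}, \cite[Theorem~B]{DS15}) apply. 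This is exactly the limitation the paper itself highlights in the remark following Theorem~\ref{0}. As written, your argument does not close the case $\kappa=2$.

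The route in \cite{Wal15} and \cite[Theorem~3.1]{Zh16c} avoids this bottleneck; Waldron's input is finite generation of the log canonical ring $R(X,K_X+B)$, and the deduction of semi-ampleness from nefness together with $\kappa\geq 1$ proceeds without manufacturing a klt structure on the base surface. You should consult those references to see how the descent is carried out.
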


We extract the following lemma from the strategy of \cite[Sec. 4]{EZ16}, which will be used in the proof of Theorem \ref{fib-to-ab-var} and \ref{fib-to-curve-k=1}.
\begin{lem}\label{tor-lem}
Let $(\hat{X},\hat{B})$ be a minimal projective $\Q$-factorial dlt pair of dimension 3, and let $\hat{f}: \hat{X} \to Y$ be a fibration to a normal variety. Assume that

$(a)$ $\hat{B} = G_1 + G_2 + \cdots + G_n$ is a sum of prime Weil divisors.\\
Then for every $j=1,2,\cdots, n$, $(K_{\hat{X}} + \hat{B})|_{G_j}$ is semi-ample, and a general fiber $F_j$ of the Iitaka fibration induced by $(K_{\hat{X}} + \hat{B})|_{G_j}$ is integral.

Assume moreover that

$(b)$ there exist $N >0$ and two different effective Cartier divisors $\hat{D}_i, i=1,2$ such that
$\hat{D}_i \sim N(K_{\hat{X}} + \hat{B}) + \hat{f}^*L_i$ for some $L_i \in \Pic^0(Y)$ and that
$$\mathrm{Supp}\hat{D}_i \subseteq \mathrm{Supp}\hat{B};$$

$(c)$ there exist effective divisors $\hat{G}_1, \hat{G}_2, \hat{G}'_1, \hat{G}'_2$ such that $\hat{D}_1 = a_{11}\hat{G}_1 + a_{12}\hat{G}_2 + \hat{G}'_1~\mathrm{and}~ \hat{D}_2 = a_{21}\hat{G}_1 + a_{22}\hat{G}_2 + \hat{G}'_2$ where $a_{11} >a_{21} \geq 0$ and $a_{22} > a_{12} \geq 0$; and

$(d)$  there exist two irreducible components, say, $G_1,G_2$ of $\hat{G}_1, \hat{G}_2$ respectively, such that for $i, j \in \{1,2\}$ and $i\neq j$, $F_j$ is dominant over $Y$ and
$$F_j \cap \mathrm{Supp}~(\hat{G}_j'':=\hat{G}_i+ \hat{G}'_1+ \hat{G}'_2)=\emptyset.$$
Then both $L_1$ and $L_2$ are torsion.

Furthermore, condition (d) holds, if for $j=1,2$, $G_j$ is not a component of $\hat{G}_j''$ and $\kappa(F_j) \geq 0$.
\end{lem}
\begin{proof}

Note that since each $G_i$ is a dlt center of $(\hat{X}, \hat{B})$, $G_i$ is a normal surface by \cite[Lemma 4.2]{Bir16}. Moreover we have $(\hat{B} - G_i)|_{G_i} \geq 0$, and $(K_{\hat{X}} + \hat{B})|_{G_i} = K_{G_i} + (\hat{B} - G_i)|_{G_i}$ is log canonical. Then since $K_{\hat{X}} + \hat{B}$ is assumed nef, by \cite[Theorem 1.2]{Ta14}, $(K_{\hat{X}} + \hat{B})|_{G_i}$ is semi-ample. And as $\dim G_i = 2$, $F_j$ is always integral by Proposition \ref{sep-fib}.

Assume (b), (c) and (d). Let's prove that $L_1, L_2$ are torsion. First considering the restrictions on $F_1$, by $(K_{\hat{X}} + \hat{B})|_{F_1} \sim_{\Q} 0$, applying these assumptions we can show
\begin{align*}
a_{21}\hat{f}^*L_1|_{F_1} &\sim_{\mathbb{Q}} a_{21}(N(K_{\hat{X}} + \hat{B}) + \hat{f}^*L_1)|_{F_1}\\
&\sim a_{21}\hat{D}_1|_{F_1} \sim a_{11}a_{21}\hat{G}_1|_{F_1} \sim a_{11}\hat{D}_2|_{F_1} \\ &
\sim a_{11}(N(K_{\hat{X}} + \hat{B}) + \hat{f}^*L_2)|_{F_1} \sim_{\mathbb{Q}} a_{11}\hat{f}^*L_2|_{F_1},
\end{align*}
thus $a_{21}L_1 \sim_{\mathbb{Q}} a_{11}L_2$ by Lemma \ref{inj-pic}. And restricting on $F_2$, in the same way we can show
$a_{22}L_1 \sim_{\mathbb{Q}} a_{12}L_2$. Then granted these two relations, we can conclude $L_i \sim_{\Q} 0, i=1,2$ from the fact that $\begin{pmatrix}a_{11} &a_{12}\\a_{21} &a_{22} \end{pmatrix}$ is invertible over $\Q$, which is because $a_{11} >a_{21} \geq 0$ and $a_{22} > a_{12} \geq 0$.

It remains to prove the third assertion. As $\kappa(F_j) \geq 0$, we can assume the canonical divisor $K_{F_j'} \geq 0$ where $F_j'$ is the normalization of $F_j$. Applying the adjunction formula, we get
\begin{align*}
0\sim_{\Q} (K_{\hat{X}} + \hat{B})|_{F_j'} &\sim_{\Q} ((K_{\hat{X}} + \hat{B})|_{G_j})|_{F_j'} \\
&\sim_{\Q} (K_{G_j} + (\hat{B} - G_j))|_{F_j'} \sim_{\Q} K_{F'_j} + C_j' + (\hat{B} - G_j)|_{F_j'}
\end{align*}
where $C_j' \geq 0$ on $F_j'$. As $F_j$ is general, we may assume $F_j$ is not contained in $\hat{B} - G_j$. In turn we conclude that $(\hat{B} - G_j)|_{F_j'} =0$. This, combing with the assumption that $G_j$ is not a component of $\hat{G}_j''$, indicates that $F_j \cap \mathrm{Supp}~\hat{G}_j''=\emptyset$.
\end{proof}

\subsection{Proof of Theorem \ref{fib-to-ab-var}}

For a sufficiently large integer $e$, the Weil index of $B' = \frac{p^e}{p^e+1}B$ is not divisible by $p$, and  $K_X + B'$ is still $f$-big. Replacing $B$ with $B'$, we can assume the Weil index of $B$ is not divisible by $p$.
By Theorem \ref{rel-mmp} (3.1, 3.4), we can replace $X$ with the relative log canonical model over $A$ with the loss of $X$ being $\Q$-factorial, thus $K_X + B$ is a nef and $f$-ample $\Q$-Cartier $\Q$-divisor.
\smallskip

We claim that \emph{we only need to prove $\kappa(K_X +B) \geq 1$}. Indeed, if this is true then $K_X +B$ is semi-ample by Theorem \ref{s-amp-k=2}, thus for a sufficiently divisible $M>0$, the linear system $|M(K_X +B)|$ has no base point. Since $(K_X +B)_{\eta}$ is big, the restriction $|M(K_X +B)||_{X_{\eta}}$ on the generic fiber defines a generically finite morphism, which indicates that $\kappa(X, K_X +B) \geq \kappa(X_{\eta}, (K_X +B)_{\eta})$.
\smallskip

Let $l, g> 0$ be two integers such that $l(K_X + B)$ is Cartier and $(p^g - 1)B$ is integral.
For an integer $e>0$ divisible by $g$, we have the trace map
$$Tr_{X,B}^{e,l}: \mathcal{F}_{e,l}:=f_*(F_{X*}^{e}\O_X((1-p^e)(K_X + B)) \otimes \O_X(l(K_X + B)))  \to f_*\O_X(l(K_X + B)),$$
and denote its image by $\mathcal{F}_e^l$. The restriction of $\mathcal{F}_e^l$ on the generic point $\eta$ defines a linear system $|(\mathcal{F}_e^l)_{\eta}|$ contained in $|l(K_X + B)_{\eta}|$.

\smallskip

We claim the following assertions with the proof postponed.
\begin{itemize}
\item[C1]
\emph{For every $l >0$ such that $l(K_X + B)$ is Cartier, there exists some $e(l) >0$ such that for any $e \geq e(l)$ divisible by $g$, the rank of $\mathcal{F}_e^l$ is a stable number $r_l$; and there exists an integer $K >0$ such that, for any $l$ divisible by $K$ and any $e$ divisible by $g$, the linear system $|(\mathcal{F}_e^l)_{\eta}|$ defines a generically finite map.}
\item[C2]
\emph{If $L$ is an ample line bundle on $\hat{A}$ and $l\geq 2$ is an integer such that $l(K_X + B)$ is Cartier, then for any $i>0$ and sufficiently divisible integer $e>0$,
$$H^i(A, \mathcal{F}_{e,l} \otimes \hat{L}^*) = 0.$$}
\end{itemize}

\smallskip

Fix a positive integer $l$ divisible by $K$ and an integer $e_0$ such that $\mathrm{rk}~\mathcal{F}_{e_0}^l = r_l$. Let $\mathcal{F} = \mathcal{F}_{e_0}^l$. Then by (C1), the linear system $|\mathcal{F}_{\eta}|$ defines a generically finite map of $X_{\eta}$. Recall that for positive integers $e'>e$ divisible by $g$, there exists a natural trace map $\mathcal{F}_{e',l} \to \mathcal{F}_{e,l}$ (Sec. \ref{tmaF}). Applying (C2), by induction we can find two integers $e_1 < e_2$ divisible by $g$ and bigger than $e_0$, two ample line bundles $H_0, H_1$ on $\hat{A}$ and three sheaves $\mathcal{F}_{0} := \mathcal{F}, \mathcal{F}_{1} := \mathcal{F}_{e_1,l}, \mathcal{F}_{2} := \mathcal{F}_{e_2,l}$ which satisfy the conditions of Theorem \ref{CGG}. Note that if $\dim A=1$ we only need two sheaves $\mathcal{F}_{0}, \mathcal{F}_{1}$.
Applying Theorem \ref{CGG} (i) and Proposition \ref{fm-gv-sh} (2), the cohomological locus $V^0(\mathcal{F}) = (-1)_{\hat{A}}^*\mathrm{Supp} R^0\Phi_{\mathcal{P}}D_A(\mathcal{F})\neq \emptyset$.

\smallskip

The statement is trivial when $\nu(K_X +B) =3$, so from now on
we assume $\nu(K_X +B) \leq 2$. We break the proof into three steps.

\smallskip

Step 1: We prove that if $\dim V^0(\mathcal{F}) >0$, then $\kappa(K_X +B) \geq 1$.

In this case $V^0(\mathcal{F})$ generates $\hat{A}$ since $\hat{A}$ is simple. Hence so does $V^0(f_*\O_X(l(K_X + B)))$ too, as $V^0(\mathcal{F}) \subseteq V^0(f_*\O_X(l(K_X + B)))$. So for $m \geq \dim A$, the map
$$\times^m V^0(f_*\O_X(l(K_X + B))) \to \hat{A}~\mathrm{via}~(\alpha_1, \alpha_2, \cdots, \alpha_m) \mapsto \alpha_1 + \alpha_2 + \cdots + \alpha_m$$
is surjective. In particular if $m> 2\dim A$, there exist infinitely many $m$-tuples $(\alpha_1, \alpha_2, \cdots, \alpha_m)$ mapped to $\hat{0}$.
Consider the natural map
\begin{align*}
\textstyle{H^0(X, l(K_X + B)}& \textstyle{+ f^*\alpha_1) \times H^0(X, l(K_X + B) + f^*\alpha_2) \times \cdots \times H^0(X, l(K_X + B) +  f^*\alpha_m)} \\
& \textstyle{\to H^0(X, ml(K_X + B)+ f^*(\alpha_1 + \alpha_2 \cdots + \alpha_m)) \cong H^0(X, ml(K_X + B)).}
\end{align*}
We can show $h^0(X, ml(K_X + B)) \geq 2$, thus $\kappa(K_X +B) \geq 1$.

\smallskip

From now on we assume that $V^0(\mathcal{F}) = (-1)_{\hat{A}}^*\mathrm{Supp} R^0\Phi_{\mathcal{P}}D_A(\mathcal{F})$ consists of finitely many closed points.

\smallskip

Step 2: We will find an integer $m_1$ and some divisors $D_i \in |m_1(K_X + B) + f^*L_i|, i=1,2,\cdots, r$ for some $L_i \in \mathrm{Pic}^0(A)$, such that the sub-linear system of $|m_1(K_X + B)_{\eta}|$ generated by $(D_i)_{\eta}, i=1,2,\cdots, r$ defines a generically finite map of $X_{\eta}$.

By Theorem \ref{CGG} (ii), there exist an isogeny $\pi: A_1 \to A$, $P_1, P_2, \cdots, P_s \in \mathrm{Pic}^0(A_1)$ and a generically surjective homomorphism $\bigoplus_j P_j \to \pi^*\mathcal{F}$. Consider the following commutative diagram
$$\centerline{\xymatrix@C=1cm{ &X_1'\ar[rd]_{f_1'}\ar[r]_<<<<<<{\nu} \ar@/^1pc/[rr]^{\pi_1'} &X_1 = X\times_A A_1 \ar[d]_{f_1} \ar[r]_<<<<<<{\pi_1} &X\ar[d]^{f}\\
                         &     &A_1\ar[r]^{\pi}                               &A
}}$$
where $X_1'$ is the normalization of the reduced
scheme structure of $X_1$. There exists a natural composition homomorphism by \cite[Chapter III. Prop. 9.3]{Har77}
\begin{equation*}
\small\begin{split}
\alpha: \bigoplus_j P_j \to \pi^*\mathcal{F} \hookrightarrow &\pi^*f_*\O_X(l(K_X + B)) \cong  f_{1*}\pi_1^*\O_X(l(K_X + B))\\
& \to f'_{1*}(\nu^*\pi_1^*\O_X(l(K_X + B))) \cong f'_{1*}(\pi_1'^*\O_X(l(K_X + B))).
\end{split}
\end{equation*}
The linear system $|(\mathrm{Im} \alpha)_{\eta}| \subseteq |\pi_1'^*(l(K_X + B))|_{(X'_1)_\eta}|$ defines a generically finite map of $(X'_1)_\eta$.
If the direct summand $P_j$ is not mapped to zero via $\alpha$, then $h^0(X_1', \pi_1'^*(l(K_X + B)) - f_1'^*P_j) \geq 1$.
Since $\pi^*: \mathrm{Pic}^0(A) \to \mathrm{Pic}^0(A_1)$ is an isogeny, there exist $Q_j \in \mathrm{Pic}^0(A)$ such that $P_j = \pi^*Q_j$. Applying
Theorem \ref{ct}, if $\alpha(P_j) \neq 0$ then
$$\kappa(X, l(K_X + B) - f^*Q_j) = \kappa(X', \pi_1'^*(l(K_X + B) - f_1'^*P_j) \geq 0.$$
We can find a sufficiently divisible integer $l_1>0$ such that for every $j$, if $\alpha(P_j) \neq 0$, the pull-back linear system $(\pi_1'^*|l_1(l(K_X + B) - f_1'^*P_j)|)|_{(X'_1)_\eta}$ defines a rational map $\phi_j: (X'_1)_{\eta} \dashrightarrow \mathbb{P}^{r_j}$ where $r_j = \dim |l_1((l(K_X + B) - f_1'^*P_j)|$, whose image has dimension $\kappa(X, l(K_X + B) - f^*Q_j)$. By the construction, the sub-linear system of $|\pi_1'^*(l_1l(K_X + B))_\eta|$ generated by the divisors of $(\pi_1'^*|l_1(l(K_X + B) - f_1'^*P_j)|)|_{(X'_1)_\eta}, j=1,2, \cdots, s$ defines a generically finite map of $(X'_1)_\eta$. Therefore, there exist $L_i = -l_1Q_{j_i}\in \mathrm{Pic}^0(A)$ for some $j_1,j_2,\cdots, j_r$ and effective divisors $D_i \in |m_1(K_X + B) + f^*L_i|$ where $m_1 = l_1l$, such that the linear system generated by $(D_i)_{\eta}, i=1,2,\cdots, r$ defines a generically finite map of $X_{\eta}$.

\smallskip

We aim to prove that there exist at least two different divisors among $D_i$, say, $D_1 \neq D_2$, such that $L_1, L_2$ are torsion in $\mathrm{Pic}^0(A)$. Then for some sufficiently divisible $N>0$ such that $NL_1 \sim NL_2 \sim 0$, we have $ND_1, ND_2 \in |Nm_1(K_X +B)|$, which concludes $\kappa(X, K_X +B) \geq 1$.

\smallskip

Step 3: We will construct a minimal dlt pair $(\hat{X}, \hat{B})$ and divisors $\hat{D}_1, \hat{D}_2$ satisfying the conditions of Lemma \ref{tor-lem}.

(3.1) Take a log resolution $\mu: \tilde{X} \to X$ of $B+ \sum_iD_i$. Let $\tilde{B}$ be the reduced divisor supported on the union of $\mu^{-1}(B+ \sum_iD_i)$ and the $\mu$-exceptional divisors. Then $(\tilde{X}, \tilde{B})$ is dlt, and $K_{\tilde{X}} + \tilde{B}$ has a weak Zariski decomposition.
By Theorem \ref{rel-mmp}, running a relative log MMP for $(\tilde{X}, \tilde{B})$ over $A$, we can get a dlt log minimal model $(\hat{X}, \hat{B})$ and a fibration
$\hat{f}: \hat{X} \to A$.
The divisor $\tilde{E} = K_{\tilde{X}} + \tilde{B} - \mu^*(K_X + B)$ is effective. Take a sufficiently divisible integer $l_2>0$ such that $l_2\tilde{E}$ is Cartier. Let $m_2 = m_1l_2$. We get effective divisors
$$\tilde{D}_i = l_2\mu^*D_i + l_2\tilde{E} \sim m_2(K_{\tilde{X}} + \tilde{B}) +  l_2\mu^*f^*L_i$$
and the push-forward divisors via the natural map $\tilde{X} \dashrightarrow \hat{X}$
$$\hat{D}_i \sim m_2(K_{\hat{X}} + \hat{B}) +  l_2\hat{f}^*L_i.$$

(3.2) We prove $\nu(K_{\hat{X}} + \hat{B}) = \nu(K_X +B)$ as follows.
Applying Proposition \ref{num-prop}, on one hand since $K_{\tilde{X}} + \tilde{B} \geq \mu^*(K_X +B)$ we have $\nu(K_{\hat{X}} + \hat{B}) = \kappa_{\sigma}(K_{\tilde{X}} + \tilde{B}) \geq \nu(K_X +B)$, on the other hand since there exists an effective $\mu$-exceptional divisor $\tilde{E}'$ such that $K_{\tilde{X}} + \tilde{B} \leq  \mu^*(K_X +B) + \sum_i\mu^*D_i + \tilde{E}' \equiv (rm_1+1)\mu^*(K_X +B) + \tilde{E}'$,
we conclude $ \kappa_{\sigma}(K_{\tilde{X}} + \tilde{B}) \leq \nu(K_X +B)$. In summary, we get the equality $\nu(K_{\hat{X}} + \hat{B}) = \nu(K_X +B)$.


(3.3) The restrictions of $\hat{D}_i$ on $\hat{X}_{\eta}$ generate a linear system $|\hat{V}| \subseteq |m_2(K_{\hat{X}} + \hat{B})_{\eta}|$, which defines a generically finite map $\hat{X}_{\eta} \dashrightarrow \mathbb{P}_{k(\eta)}^{r-1}$ by the construction in Step 2. Let $\hat{C}_{\eta}$ be the fixed part of $|\hat{V}|$, and set $\hat{A}_{i, \eta}  = (\hat{D}_{i})_\eta - \hat{C}_{\eta}$. We may assume $\hat{A}_{1, \eta} \neq \hat{A}_{2, \eta}$. Since $\hat{A}_{1, \eta} \sim \hat{A}_{2, \eta}$, we can choose two irreducible components $G_{1, \eta}, G_{2, \eta}$ of $\hat{A}_{1, \eta} + \hat{A}_{2, \eta}$ such that,
\begin{itemize}
\item
if $b_{ij}, i,j =1,2$ are the coefficients of $G_{j, \eta}$ in $\hat{A}_{i, \eta}$ respectively, i.e.,
$$\hat{A}_{1, \eta} = b_{11}G_{1, \eta} + b_{12}G_{2, \eta} + G_{1, \eta}'~\mathrm{and}~ \hat{A}_{2, \eta} = b_{21}G_{1, \eta} + b_{22}G_{2, \eta} + G_{2, \eta}'$$
where neither of $G_{1, \eta}, G_{2, \eta}$ are contained in $G_{1, \eta}' + G_{2, \eta}'$, then $b_{11} >b_{21} \geq 0$ and $b_{22} > b_{12} \geq 0$.
\end{itemize}

(3.4) If $\dim \hat{X}_{\eta} = 2$, since $K_{\hat{X}} + \hat{B}$ is relatively big over $A$, we can choose $\hat{A}_{2, \eta}$ and $G_{i, \eta}, i=1,2$ such that $(K_{\hat{X}} + \hat{B})|_{G_{i, \eta}}$ is big as follows. First we can choose a component $G_{1, \eta}$ of $\hat{A}_{1, \eta}$ such that $(K_{\hat{X}} + \hat{B})|_{G_{1, \eta}}$ is big. Second since $\hat{A}_{i, \eta}, i=1,2,\cdots, r$ have no common component, we can choose $\hat{A}_{2, \eta}$ not containing $G_{1, \eta}$, i.e., $b_{21} = 0$. Finally since $\hat{A}_{1, \eta} \sim \hat{A}_{2, \eta}$ and the intersection number $(K_{\hat{X}} + \hat{B})\cdot (\hat{A}_{1, \eta} - b_{11}G_{1, \eta}) < (K_{\hat{X}} + \hat{B})\cdot \hat{A}_{2, \eta}$, $\hat{A}_{2, \eta}$ must have an irreducible component $G_{2, \eta}$ such that $(K_{\hat{X}} + \hat{B}) \cdot G_{2, \eta} >0$ and the coefficients $b_{22} > b_{12}$.

(3.5) Let $G_i, i=1,2$ be the reduced irreducible divisors on $\hat{X}$ such that $(G_i)_{\eta} = G_{i, \eta}$. By (3.3) we can write that
\begin{equation*}\label{eq1}
\hat{D}_1 = a_{11}G_1 + a_{12}G_2 + G'_1~\mathrm{and}~ \hat{D}_2 = a_{21}G_1 + a_{22}G_2 + G'_2
\end{equation*}
where $a_{11} >a_{21} \geq 0$ and $a_{22} > a_{12} \geq 0$, and neither of $G_1,G_2$ are contained in $G_1' + G_2'$.
\medskip

(3.6) By Lemma \ref{tor-lem}, we know that $G_i$ is normal, and $(K_{\hat{X}} + \hat{B})|_{G_i}$ is semi-ample.
Since $\hat{D}_i \equiv m_2(K_{\hat{X}} + \hat{B})$ and $G_i$ is a component of $\hat{D}_i$, by Proposition \ref{num-prop} we have
$$\kappa(G_i, (K_{\hat{X}} + \hat{B})|_{G_i}) = \nu(G_i, (K_{\hat{X}} + \hat{B})|_{G_i}) \leq \nu(\hat{X}, K_{\hat{X}} + \hat{B}) - 1.$$

For the case $\dim A =2$, we have $\nu(G_i, (K_{\hat{X}} + \hat{B})|_{G_i}) =0$ or $1$. The case $\nu(G_i, (K_{\hat{X}} + \hat{B})|_{G_i}) = 1$ does not happen, because otherwise, the divisor $(K_{\hat{X}} + \hat{B})|_{G_i} \sim_{\Q} K_{G_i} + (\hat{B} - G_i)|_{G_i}$ will induce a fibration fibred by curves of arithmetic genus $\leq 1$ on $G_i$, which is impossible since $G_i$ is dominant over $A$ and $A$ is simple. Thus the Iitaka fibration induced by $(K_{\hat{X}} + \hat{B})|_{G_i}$ is trivial. By (3.5), the surface $G_i$ is dominant over $A$, hence $\kappa(G_i, K_{G_i}) \geq 0$ (see for example \cite[Prop. 13.1]{Ba01}).

For the case $\dim A =1$, $\hat{X}_\eta$ is a surface over $k(\eta)$, and by (3.4)
$$\kappa(G_i, (K_{\hat{X}} + \hat{B})|_{G_i}) = \nu(G_i, (K_{\hat{X}} + \hat{B})|_{G_i}) = 1.$$
Take a general fiber $F_i$ of the Iitaka fibration induced by $(K_{\hat{X}} + \hat{B})|_{G_i}$ on $G_i$. Since $(K_{\hat{X}} + \hat{B})|_{G_{i,\eta}}$ is big, $F_i$ is a curve dominant over $A$, thus $\kappa(F_i, K_{F_i}) \geq 0$.

\smallskip
Let $\hat{G}_i = G_i$ and $\hat{G}'_i = G'_i$ for $i=1,2$. By (3.1), (3.5) and (3.6), the conditions of Lemma \ref{tor-lem} are satisfied, hence $l_2L_1, l_2L_2$ are torsion, which finishes the proof.

\medskip

It remains to prove Claim (C1) and (C2).
\smallskip

(C1)
For every $l>0$, if $e' >e$ then $\mathcal{F}_{e'}^l \subseteq \mathcal{F}_e^l$, hence there exists some $e(l)$ such that for every $e \geq e(l)$, we have $\mathrm{rk}~\mathcal{F}_e^l = \mathrm{rk}~\mathcal{F}_{e(l)}^l$, which is denoted by $r_l$.

To study the linear system $|(\mathcal{F}_e^l)_{\eta}|$, we may replace $B$ with $B + f^*H$ for some ample divisor $H$ on $A$, so $K_X+B$ is ample. And we may replace $B$ with a bigger divisor $B + tD$ for some effective divisor $D \sim_{\Q} K_X+B$ and a rational number $t >0$, to make the Cartier index of $K_X + B$ not divisible by $p$.\footnote{The divisor $D$ can be obtained as follows. Take an effective Weil divisor $D_1 \sim K_X + A$ for some ample divisor $A$ on $X$. For sufficiently large $v$ the divisor $K_X + B - \frac{1}{p^v -1}(D_1 + B)$ is an ample $\Q$-Cartier $\Q$-divisor and hence is $\Q$-linearly equivalent to an effective divisor $D_2$, which can be assumed to have Cartier index not divisible by $p$. Let $D= D_2 +  \frac{1}{p^v -1}(D_1 + B)$. Identifying $K_X$ with $D_1 - A$ similarly as in \cite[Lemma 3.15]{Pa14}, one can verify that $p$ does not divide the Cartier index of
$$K_X + B +  \frac{1}{p^{v} + 1}D = -A + \frac{1}{p^{v} + 1}D_2 + \frac{p^{2v}}{p^{2v} - 1}(D_1 + B)$$}
Moreover we remark that for some fixed $l$, to show that the linear system $|(\mathcal{F}_e^l)_{\eta}|$ defines a generically finite map of $X_\eta$, we only need to prove this assertion for any sufficiently divisible $e$.

Fix a sufficiently divisible integer $g'>0$ such that $(1-p^{g'})(K_X + B)$ is Cartier, and that
for the non-$F$-pure ideal $\sigma(B)$ (Sec. \ref{tmaF})
$$Tr_{X,B}^{g'} (F_{X*}^{g'}(\sigma(B)\cdot\O_X((1-p^{g'})(K_X + B)))) = \sigma(B).$$
Since $K_X+B$ is ample, applying Lemma \ref{stable-sec-lem}, we can find a sufficiently divisible integer $K$ such that
for every positive integer $l$  divisible by $K$ the trace map
\begin{align*}\Phi_{e,l}: H^0(X, F_{X*}^{eg'}(\sigma(B)\cdot\O_X((1-p^{eg'})&(K_X + B)))\otimes \O_X(l(K_X + B)))\\
 &\to H^0(X, \sigma(B)\cdot \O_X(l(K_X + B))).\end{align*}
is surjective. It follows that
$$|\sigma(B) \cdot \O_X(l(K_X + B))||_{X_{\eta}} = |\mathrm{Im}(\Phi_{e,l})||_{X_{\eta}}.$$
If necessary, we can enlarge $K$ to assume that the linear system $|\sigma(B) \cdot \O_X(K(K_X + B))|$ defines a generically finite map of $X$. Then by $|\mathrm{Im}(\Phi_{e,l})||_{X_{\eta}} \subseteq |(\mathcal{F}_e^l)_{\eta}|$,
we show that $|(\mathcal{F}_e^l)_{\eta}|$ defines a generically finite map of $X_{\eta}$.

\smallskip

(C2) Before the proof we remind that the Weil index $q_0$ of $K_X + B$ is not divisible by $p$, but the Cartier index $q_1$ is possibly divisible by $p$. From now on to the end of the proof of (C2), we assume the integer $e$ always satisfies $q_0|p^e-1$. Let $d = \frac{q_1}{q_0}$. Then $H = q_1(K_X + B)$ is a nef and $f$-ample divisor, and we have a set consisting of finitely many coherent sheaves
$$\{\mathcal{G}_r= \O_X(rq_0(K_X + B))| r=0,1,\cdots d-1 \}.$$

Define $\phi_L: \hat{A} \to A ~\mathrm{via}~\hat{t} \mapsto T_{\hat{t}}^*L \otimes L^{-1}\in \mathrm{Pic}^0(\hat{A}) = A.$
Then by results of \cite[Sec. 13, 16]{Mum74}, the morphism $\phi_L$ is an isogeny since $L$ is ample, and
$$\phi^*\hat{L}^* \cong \bigoplus^{r} L~\mathrm{where}~r = h^0(\hat{A}, L).$$

The isogeny $\phi_L: \hat{A} \to A$ is not necessarily separable. Denote by $\hat{K}$ the kernel of $\phi_L$, let $\hat{K}_0$ be the maximal sub-group of $\hat{K}$ supported at $\hat{0}$ and let $A' = \hat{A}/\hat{K}_0$. Then we have a factorization
\begin{align*}
\phi_L: \hat{A} \xrightarrow{\nu} A' \xrightarrow{\mu} A
\end{align*}
where $\nu: \hat{A} \to A'$ is the natural quotient map which is purely inseparable, and $\mu: A'\to A$ is \'{e}tale.
Fix a sufficiently large integer $g_1$ such that $F_{A'}^{g_1}: A'^{g_1} \to A'$ factors through $\nu$. Let
$$\phi = \mu\circ F_{A'}^{g_1}: A'^{g_1} \to A' \to A.$$
Then $\phi^*\hat{L}^* \cong \bigoplus^r L'$ where $L'$ is an ample line bundle on $A'^{g_1}$.  For  a larger integer $e>{g_1}$, we get the following commutative diagram
$$\centerline{\xymatrix@C=1.3cm{
&X'^e \ar[dd]_{f'^e}\ar[rd]^{F_{X'}^e}\ar@/^2pc/[rr]^{h^e}\ar[r]^>>>>>>{\sigma}  &X_1'^e = X^e\times_A A'\ar[d]^{W_{X'}^e}\ar[r]^<<<<<<{h_1^e}  &X^e\ar[d]^{F_X^e}   \\
& &X' =X\times_A A' \ar[r]^<<<<<<{h}\ar[d]^<<<<<<{f'}                          &X\ar[d]^f      \\
&A'^e \ar[r]^{F_{A'}^e}  &A' \ar[r]^{\mu}                                  &A
}}$$
where $W_{X'}^e, h_1^e, f', h$ denote the natural projections of the corresponding fiber products, and $\sigma: X'^e \to X_1'^e$ arises from the universal property of the fiber product. Since the base change $h: X' \to X$ is \'{e}tale, $\sigma$ is an isomorphism.

Assume $l \geq 2$, $q_1|l$. Let $n_e = \llcorner\frac{1-p^e + lp^e}{q_1} \lrcorner$. Then we can write that $1-p^e + lp^e = n_e q_1 + r_eq_0$ where $0 \leq r_e <d$. It follows that
$$\O_{X}((1-p^e + lp^e)(K_X + B)) \cong \O_X(n_eH) \otimes \mathcal{G}_{r_e}.$$
By Theorem \ref{pf-OA} (3), to get that for every $i>0$,
$$H^i(A, f_*(F_{X*}^{e}\O_{X}((1-p^e)(K_X + B)) \otimes \O_X(l(K_X + B))) \otimes \hat{L}^*)=0,$$
we only need to verify that for every $i>0$,
$$H^i(A', \mu^*(f_*F_{X*}^{e}\O_X((1-p^e + lp^e)(K_X + B))) \otimes \hat{L}^*)) = 0.$$
This is true when $e\gg0$, which is proved as follows
\begin{equation*}
\small\begin{split}
&H^i(A', \mu^*(f_*F_{X*}^{e}\O_X((1-p^e + lp^e)(K_X + B))) \otimes \hat{L}^*)) \\
&\cong H^i(A', \mu^*(f_*F_{X*}^{e}\O_X((1-p^e + lp^e)(K_X + B))) \otimes \mu^*\hat{L}^*)) \\
&\cong H^i(A', f'_*W_{X'*}^{e}(h_1^{e*}\O_X((1-p^e + lp^e)(K_X + B))) \otimes \mu^*\hat{L}^*) \\
&\cong H^i(A', f'_*W_{X'*}^{e}\sigma_*(\sigma^*h_1^{e*}\O_X((1-p^e + lp^e)(K_X + B))) \otimes \mu^*\hat{L}^*) \\
&\cong H^i(A', f'_*F_{X'*}^{e}(h^{e*}\O_X((1-p^e + lp^e)(K_X + B))) \otimes \mu^*\hat{L}^*) \\
&\cong H^i(A', F_{A'*}^{e}f'^e_*(h^{e*}\O_X((1-p^e + lp^e)(K_X + B))) \otimes \mu^*\hat{L}^*) \\
&\cong H^i(A'^e, f'^e_*(h^{e*}\O_X((1-p^e + lp^e)(K_X + B))) \otimes F_{A'}^{e*}\mu^*\hat{L}^*) \\
&\cong H^i(A'^e, f'^e_*((h^{e*}\O_X((1-p^e + lp^e)(K_X + B))) \otimes (f'^{e*}F_{A'}^{(e-g_1)*}F_{A'}^{g_1*}\mu^*\hat{L}^*)) \\
&\cong H^i(A', f'_*(\O_{X'}(h^{*}n_eH) \otimes h^*\mathcal{G}_{r_e} \otimes \bigoplus^r f'^*(L')^{p^{e-g_1}})) \\
&\cong \bigoplus^r H^i(X', \O_{X'}(h^{*}n_eH + p^{e-g_1} f'^{*}L') \otimes h^*\mathcal{G}_{r_e}) = 0
\end{split}
\end{equation*}
where
\begin{itemize}
\item
the $2^{\mathrm{nd}}$ $\cong$ is due to $\mu^*f_*F_{X*}^{e} \cong f'_*W_{X'*}^{e}h_1^{e*}$ since $\mu$ is a flat base change;
\item
the $3^{\mathrm{rd}}$ $\cong$ is due to the fact that $\sigma$ is an isomorphism;
\item
the $6^{\mathrm{th}}$ $\cong$ is from applying projection formula and $RF_{A'*}^{e} \cong F_{A'*}^{e}$;
\item
the $9^{\mathrm{th}}$ $\cong$ is from applying Lerray spectral sequence and relative Fujita vanishing (Lemma \ref{rel-fujita}) $R^jf'_*(\O_{X'}(h^{*}n_eH) \otimes h^*\mathcal{G}_{r_e})= 0$ for $j>0$ since $n_eH$ is sufficiently $f'$-ample if $e\gg0$, and the last vanishing follows from applying Fujita vanishing since $h^{*}n_eH + p^{e-g_1} f'^{*}L'$ is sufficiently ample if $e\gg0$.
\end{itemize}

\subsection{Proof of Theorem \ref{fib-to-curve-k=1}}
We may assume that $K_X + B$ is nef by working on a log minimal model of $(X,B)$ over $Y$ (Theorem \ref{rel-mmp} (3.4)). As $\kappa(X_{\ol\eta}, K_{X_{\ol\eta}} + B_{\ol\eta}) = 1$, there exist a log resolution $\sigma: W \to X$ and a fibration $h: W \to Z$ to a smooth projective surface $Z$, which is birational to the relative Iitaka fibration induced by $\sigma^*(K_X +B)$ on $W$ over $Y$. These varieties fit into the following commutative diagram
$$\centerline{\xymatrix{ &W\ar[d]_h\ar[r]^{\sigma} &X\ar[d]^f  \\ &Z\ar[r]^g  &Y}}$$
By the assumption $\spadesuit$ and Proposition \ref{fib-pa=1}, the geometric generic fiber of $h$ is a smooth elliptic curve over $\ol{K(Z)}$. Applying flattening trick (\cite[Lemma 5.6]{BW14}), we can assume $\sigma^*(K_X + B) \sim_{\mathbb{Q}} h^*C$ where $C$ is a nef and $g$-big divisor on $Z$.
By \cite[Claim 3.1 and 3.2]{CZ15}, there exists an effective divisor $E$ on $W$ such that $K_W \sim_{\Q} h^*K_Z + E$.

For the case $g(Y) > 1$, applying Theorem \ref{thm-Zh16a} for $g: Z \to Y$, we show that for $n\gg0$, $nC + K_Z$ is big. By $K_W \sim_{\Q} h^*K_Z + E$, applying Theorem \ref{ct}, we have
\begin{align*}\kappa(X, K_X+ B) &= \kappa(X, (n+1)(K_X+ B)) \geq \kappa(X, n(K_X+ B) + K_X) \\
&=\kappa(W, \sigma^*n(K_X+ B) + K_W) \geq \kappa(Z, nC + K_Z) =2.
\end{align*}

Let's restrict on the case $g(Y) = 1$. We aim to prove that $\kappa(X, K_X + B) \geq 1$. First applying \cite[Theorem 1.2 and 1.3]{CZ15}, we have
$$\kappa(X, K_X +B) \geq \kappa(X) = \kappa(W) \geq \kappa(Z) \geq \kappa(Z, K_{Z_{\bar{\eta}}}).$$
So we may assume that $\kappa(Z, K_{Z_{\bar{\eta}}}) \leq 0$, i.e., $p_a(Z_{\bar{\eta}}) \leq 1$, hence the geometric generic fiber $Z_{\bar{\eta}}$ is either a smooth elliptic curve (Proposition \ref{fib-pa=1}) or a rational curve over $k(\bar{\eta})$.
And if $\nu(Z, C) = 2$ then $C$ is big, applying Theorem \ref{ct} we are done by
$$\kappa(X, K_X + B) = \kappa(W, \sigma^*(K_X + B)) = \kappa(Z, C) =2.$$
So we may assume additionally that $\nu(Z, C) =1$.

We will mimic the proof of Theorem \ref{fib-to-ab-var} and break the arguments into three steps for similar purposes.
\smallskip

Step 1:
By the assumptions above, $g: Z \to Y$ is generically smooth. And since $K_Y \sim 0$ and $C$ is nef and $g$-big, we can apply Lemma \ref{positivity-surf} and obtain that, for sufficiently divisible $n>0$, $V': = g_*\O_Z(nC + K_{Z})$ is a nef vector bundle of rank $\geq 2$.
Fix a sufficiently divisible $N>0$ such that $NK_W \sim h^*NK_Z + NE$ where $NE$ is integral, and that $N(K_X + B)$ is Cartier.
Applying the projection formula, we can get a natural inclusion $\mathcal{O}_Z(NK_{Z}) \hookrightarrow h_*\mathcal{O}_W(NK_{W})$. In turn we have
\begin{align*}
&Sym^N V' = Sym^N g_*\O_Z(nC + K_{Z}) \to g_*\O_Z(nNC + NK_{Z}) \\
&\hookrightarrow g_*h_*\O_W(nN\sigma^*(K_X + B) + NK_{W}) \cong f_*\sigma_*\O_W(nN\sigma^*(K_X + B) + NK_{W})\\
&\subseteq f_*\O_X(Nn(K_X + B) + NK_{X}) \subseteq f_*\O_X(Nn(K_X + B) + N(K_{X} + B))\\
&= f_*\O_X(N(n+1)(K_X + B)).
\end{align*}
Denote by $V$ the image of $Sym^N V'$ via the above composition map. Let $l = N(n+1)$. Then $V$ is a nef sub-vector bundle of $f_*\O_X(l(K_X + B))$, and $\mathrm{rk}~V = r' \geq 2$.


By \cite[Proposition 2.7]{EZ16}, there exists a flat base change $\pi: Y_1 \to Y$ between elliptic curves such that $\pi^*V \cong \bigoplus_{i=1}^{i=r'} L_i'$ where each $L'_i$ is a line bundle with $\deg L'_i \geq 0$. Consider the following commutative diagram
\begin{align}\label{comm-diag}
\xymatrix@C=1cm{ &X_1'\ar[rd]_{f_1'}\ar[r]_<<<<<<{\nu}\ar@/^1pc/[rr]^{\pi_1'} &X_1 = X\times_Y Y_1 \ar[d]_{f_1} \ar[r]_<<<<{\pi_1} &X\ar[d]^{f}\\
                         &     &Y_1\ar[r]_{\pi}                                 &Y
}
\end{align}
where $X_1'$ is the normalization of $X_1$. We have a natural inclusion map
\begin{align*}
\alpha: \bigoplus_{i=1}^{i=r'} L'_i \cong &\pi^*V \subseteq \pi^*(f_*\O_X(l(K_X + B)))\\
 &\cong  f_{1*}\O_{X_1}(\pi_1^*(l(K_X + B)))   \subseteq f_{1*}'\O_{X_1'}(\pi_1'^*(l(K_X + B))).
\end{align*}

If some $L'_{i_0}$ satisfies that $\deg L'_{i_0} >0$, by doing a further \'etale base change, we may assume $\deg L'_{i_0} \geq 2$, thus
$$h^0(X_1', \pi_1'^*(m(K_X + B))) = h^0(Y_1, f_{1*}'\O_{X_1'}(\pi_1'^*m(K_X + B))) \geq  h^0(Y_1, L'_{i_0}) \geq 2,$$
which implies $\kappa(X, K_X + B) = \kappa(X', \pi_1'^*(K_X + B)) \geq 1$ by Theorem \ref{ct}.

From now on, we assume every $L'_i \in \mathrm{Pic}^0(Y_1)$.

\smallskip

Step 2: Granted the inclusion map $\alpha$ and the commutative diagram (\ref{comm-diag}) in Step 1, as in Step 2 of the proof of Theorem \ref{fib-to-ab-var},
we can find an integer $m_1 = l_1l$ and some divisors $D_i \in |m_1(K_X + B) + f^*L_i|, i=1,2,\cdots, r$ for some $L_i \in \mathrm{Pic}^0(Y)$, such that the sub-linear system of $|m_1(K_X + B)_{\eta}|$ generated by $(D_i)_{\eta}, i=1,2,\cdots, r$ defines a nontrivial map of $X_{\eta}$.
\smallskip

We only need to prove that there exist at least two different divisors among $D_i$, say, $D_1 \neq D_2$, such that $L_1, L_2$ are torsion in $\mathrm{Pic}^0(Y)$ (Step 2 of Theorem \ref{fib-to-ab-var}).
\smallskip

Step 3: We will construct a minimal dlt pair $(\hat{X}, \hat{B})$ and divisors $\hat{D}_1, \hat{D}_2$ satisfying the conditions of Lemma \ref{tor-lem}.

(3.1) Take a log resolution $\mu: \tilde{X} \rightarrow X$ of the pair $(X, B + \sum_iD_i)$. Denote by $\tilde{f}: \tilde{X} \rightarrow Y$ the natural morphism.
Let $\tilde{B}$ be the reduced divisor supported on the union of $\sum_i \mu^*D_i$ and the exceptional divisors.
By running a log MMP, we get a minimal dlt model $(\hat{X}, \hat{B})$, and a natural morphism $\hat{f}: \hat{X} \to Y$. The divisor $\tilde{E} = K_{\tilde{X}} + \tilde{B} - \mu^*(K_X + B)$ is effective. Take a sufficiently divisible integer $l_2>0$ such that $l_2\tilde{E}$ is Cartier. Let $m_2 = m_1l_2$. We get effective divisors
$$\tilde{D}_i = l_2\mu^*D_i + l_2\tilde{E} \sim m_2(K_{\tilde{X}} + \tilde{B}) +  l_2\mu^*f^*L_i$$
and the push-forward divisors via the natural map $\tilde{X} \dashrightarrow \hat{X}$
$$\hat{D}_i \sim m_2(K_{\hat{X}} + \hat{B}) +  l_2\hat{f}^*L_i.$$

(3.2) We can prove $\nu(K_{\hat{X}} + \hat{B}) = \nu(K_X +B)=1$ as in Step 3 (3.2) of the proof of Theorem \ref{fib-to-ab-var}.
Note that $(K_{\hat{X}} + \hat{B})_{\eta}$ is semi-ample by Theorem \ref{rel-mmp} (3.2), thus $\kappa(\hat{X}_{\eta}, (K_{\hat{X}} + \hat{B})_{\eta}) =1$. Considering the relative Iitaka fibration $\hat{h}':\hat{X} \dashrightarrow \hat{Z}'$ of $\hat{X}$ over $Y$  induced by $K_{\hat{X}} + \hat{B}$ and applying flattening trick (\cite[Lemma 5.6]{BW14}), we get a commutative diagram
$$\centerline{\xymatrix{
&\hat{W}\ar[d]_{\hat{h}}\ar[r]^{\hat{\sigma}} &\hat{X}\ar[d]^{\hat{f}} &\tilde{X}\ar@{.>}[l]\ar[ld]^{\tilde{f}} \\
&\hat{Z} \ar[r]^{\hat{g}} &Y  &
}}$$
such that $\hat{\sigma}^*(K_{\hat{X}} + \hat{B}) \sim_{\mathbb{Q}} \hat{h}^*\hat{C}$ for some nef and $\hat{g}$-big divisor $\hat{C}$ on $\hat{Z}$ , where $\hat{W}$ and $\hat{Z}$ are smooth, birational to $\hat X$ and $\hat Z'$ respectively, and $\hat{h}$ is a fibration birational to $\hat{h}'$. By the construction, $\hat{h}: \hat{W} \to \hat{Z}$ is an elliptic fibration, and
$\nu(\hat{Z}, \hat{C}) = 1$.


(3.3) We can take effective divisors $\hat{C}_i \sim m_2\hat{C} + l_2\hat{g}^*L_i$ on $\hat{Z}$ such that $\hat{h}^*\hat{C}_i = \hat{\sigma}^*\hat{D}_i$.
Note that $\hat{C}_i$ is nef and $\hat{C}_i^2 =0$.
Considering the connected components of the union of $\hat C_i, 0 \leq i \leq r$, we can show that there exist effective Cartier divisors $\hat C_1',\ldots,\hat C_s'$ satisfying the following conditions:
\begin{itemize}
\item $\mathrm{Supp}(\hat{C}_j')$ is connected for each $j$, and $\mathrm{Supp}(\hat{C}_{j_1}')\cap\mathrm{Supp}(\hat{C}_{j_2}')=\emptyset$ if $j_1\neq j_2$;
\item every $\hat C_j'$ is nef and $(\hat{C}_j')^2=0$;
\item the greatest common divisor of the coefficients of every $\hat{C}_j'$ is equal to one;
\item for each $i$, there exist $a_{i1},\ldots,a_{is} \in \mathbb{Z}^{\geq 0}$ such that $\hat{C}_i = a_{i1}\hat{C}_1' + \cdots + a_{is}\hat{C}_s'$.
\end{itemize}
As $\hat{\sigma}^*\hat{D}_i = \hat{h}^*\hat{C}_i$, by the construction,
$\hat h^*\hat C_1',\ldots,\hat h^*\hat C_s'$ are disjoint connected components of $\hat \sigma^*(\sum \hat D_i)$.
Let $\hat{G}_j:=\hat\sigma_*\hat h^*\hat C_j'$. Then $\mathrm{Supp}(\hat{G}_{j_1})\cap\mathrm{Supp}(\hat{G}_{j_2})=\emptyset$ if $j_1\neq j_2$, and
$\hat{D}_i = a_{i1}\hat{G}_1 + \cdots + a_{is}\hat{G}_s$.

(3.4) We can take two divisors among $\hat{D}_i$, say, $\hat{D}_1,\hat{D}_2$ such that $(\hat{D}_1)_\eta \neq (\hat{D}_2)_\eta$. And since $(\hat{D}_1)_\eta \sim (\hat{D}_2)_\eta$, we can find two connected components among $\hat{G}_j$, say, $\hat{G}_1, \hat{G}_2$, such that $(\hat{G}_1)_\eta, (\hat{G}_2)_\eta >0$ and $a_{11}>a_{21}\geq0$ and $a_{22}>a_{12} \geq 0$.
And if we write that
$$\hat D_1 = a_{11}\hat{G}_1 + a_{12}\hat{G}_2 + \hat{G}'_1~\mathrm{and}~ \hat D_2 = a_{21}\hat{G}_1 + a_{22}\hat{G}_2 + \hat{G}'_2$$
then for $i,j \in \{1,2\}$ and $i \neq j$, $\hat{G}_j$ does not intersect $\hat{G}''_j:=\hat{G}_i + \hat{G}'_1 + \hat{G}'_2$.

(3.5) Take two reduced, irreducible and dominant over $Y$ components $G_1, G_2$ of $\hat{G}_1, \hat{G}_2$ respectively.
Since $K_{\hat{X}} + \hat{B}$ is nef and $\nu(\hat{X}, \hat{D}_j)=\nu(\hat{X}, K_{\hat{X}} + \hat{B}) = 1$, applying Proposition \ref{num-prop}
we obtain that for $j =1,2$,
$$\nu(G_j, (K_{\hat{X}} + \hat{B})|_{G_j}) = \nu(G_j, \hat{D}_j|_{G_j}) \leq \nu(\hat{X}, \hat{D}_j)- 1=0,$$
hence $(K_{\hat{X}} + \hat{B})|_{G_j} \sim_{\Q} 0$ (Lemma \ref{tor-lem}), so the Iitaka fiber $F_j=G_j$.

Finally, as the conditions of Lemma \ref{tor-lem} are verified for the pair $(\hat{X}, \hat{B})$ and the divisors $\hat{D}_1,\hat{D}_2$, we conclude that $L_1$ and $L_2$ are torsion. The proof is completed.

\subsection{Proof of Theorem \ref{fib-to-curve}}
We can assume $(X, B)$ is relatively minimal over $Y$. Then by Theorem \ref{rel-mmp} (3.3, 3.4), $K_X + B$ is nef and $f$-$\Q$-trivial, so we can assume $l(K_X + B) \sim f^*L$ for some integer $l>0$ and a nef line bundle $L$ on $Y$. If $\deg L >0$ then $\kappa(X, K_X + B) \geq 1 \geq \kappa(Y)$.
So we may assume $\deg L = 0$. And we only need to prove that $g(Y) =1$ and $L$ is torsion in $\mathrm{Pic}^0(Y)$.

\begin{lem}\label{pd-nef}
Let $D$ be a pseudo-effective $\Q$-Cartier $\Q$-divisor on $X$. If $D$ is $f$-nef, then $D$ is nef.
\end{lem}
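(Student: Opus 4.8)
The plan is to deduce this from the relative minimal model program for $f$-nef log canonical classes, namely Theorem \ref{rel-mmp}(3.4), after perturbing $D$ into the boundary. Since the nef cone is closed, it suffices to show that $D + \e A$ is nef for every ample $\Q$-divisor $A$ on $X$ and every rational $\e > 0$, so fix such $A$ and $\e$. As $D$ is pseudo-effective and $A$ is ample, the class $D + \e A$ lies in the interior of the pseudo-effective cone, hence is big, and since $X$ is $\Q$-factorial we may write $D + \e A \sim_{\Q} E$ with $E$ an effective $\Q$-divisor on $X$.

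Next I would produce a klt pair carrying this class. For $0 < \delta \ll 1$ the pair $(X, B + \delta E)$ is still klt (hence dlt) and $\Q$-factorial, because $(X,B)$ is klt, and
$$K_X + B + \delta E \sim_{\Q} (K_X + B) + \delta(D + \e A).$$
For any curve $C$ contracted by $f$ we have $(K_X + B)\cdot C \geq 0$ (as $K_X + B$ is nef), $D\cdot C \geq 0$ (as $D$ is $f$-nef) and $A\cdot C > 0$, so $K_X + B + \delta E$ is nef over $Y$. Because $Y$ is a smooth curve of genus $\geq 1$ it contains no rational curves, so Theorem \ref{rel-mmp}(3.4), applied to $(X, B + \delta E)$, shows that $K_X + B + \delta E$ is nef on $X$.

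To finish, recall that $k(K_X + B) \sim_{\Q} f^*L$ with $\deg L = 0$; since a degree-zero line bundle on a smooth curve is numerically trivial, $f^*L \equiv 0$ and therefore $K_X + B \equiv 0$. Hence $K_X + B + \delta E \equiv \delta(D + \e A)$, and as the left-hand class is nef and nefness depends only on the numerical class, $D + \e A$ is nef. Letting $\e \to 0$ and using that the nef cone is closed gives that $D$ is nef.

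I expect the only delicate point to be the passage from the abstract pseudo-effective $f$-nef class $D$ to an honest klt pair to which Theorem \ref{rel-mmp}(3.4) applies: one must perturb by an ample divisor to gain bigness and hence an effective representative $E$, absorb a small multiple of $E$ into the boundary while keeping the pair klt, and verify that the new log canonical class remains nef over $Y$. The remaining ingredients — closedness of the nef cone, inheritance of $\Q$-factoriality, and the reduction $K_X + B \equiv 0$ coming from $\deg L = 0$ — are routine.
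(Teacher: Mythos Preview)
Your proof is correct and follows essentially the same approach as the paper: perturb $D$ by an ample divisor to obtain a big class, represent it by an effective $\Q$-divisor, absorb a small multiple into the boundary keeping the pair klt, and then invoke Theorem~\ref{rel-mmp}(3.4) together with $K_X + B \equiv 0$. The only cosmetic difference is that the paper argues by contradiction (choosing a small ample $H$ with $D+H$ still not nef and deriving a contradiction), whereas you prove directly that $D+\e A$ is nef for all rational $\e>0$ and pass to the limit.
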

\begin{proof}
Assume the contrary. We can take an ample $\Q$-divisor $H$ on $X$ such that $D+H$ is not nef. Since $D$ is pseudo-effective, there exists an effective
$\Q$-divisor $B'\sim_{\Q} D + H$. Take a small rational number $t > 0$ such that $(X, B + tB')$ is klt. Since $K_X + B + tB'$ is $f$-nef, applying Theorem \ref{rel-mmp} shows that $tB' \equiv K_X + B + tB'$ is nef, which contradicts the assumption.
\end{proof}

Let $H$ be an ample Cartier divisor on $X$ and $F$ a general fiber of $f$. We can find two positive integers $a,b$ such that $D^3 = 0$ where $D = aH - bF$.

\medskip

\emph{We claim that $D$ is nef, thus $\nu(D) =2$.}  By Lemma \ref{pd-nef} it suffices to show that $D$ is pseudo-effective. Otherwise, denote by $t_0$ the maximal real number such that $H - t_0F$ is pseudo-effective. Then $t_0 < \frac{b}{a}$ and $(H - t_0F)^3 >0$. Since for every rational number $t<t_0$ the divisor $H - tF$ is $f$-ample and pseudo-effective, so $H - tF$ is nef by Lemma \ref{pd-nef}. Taking the limit shows that $H-t_0F$ is nef, thus $H-t_0F$ is big since $(H - t_0F)^3 >0$ (\cite[Chap. V Lemma 2.1]{Nak04}).\footnote{this is well known for $\Q$-Cartier $\Q$-divisors (\cite[Prop. 2.61]{KM98}), but the proof also applies for $\R$-Cartier $\R$-divisors as mentioned by \cite[Chap. V Lemma 2.1]{Nak04}} It follows that for a sufficiently small $\epsilon >0$ such that $t_0 + \epsilon \in \Q$, the divisor $H - t_0F - \epsilon F$ is $\Q$-linearly equivalent to an effective divisor. However, this contradicts the definition of $t_0$.

\medskip

Take a smooth resolution of singularities $\sigma: W \to X$ and let $h = f \circ\sigma: W \to X \to Y$. Since $n\sigma^*D + K_{W/Y} - K_{W/Y} = n\sigma^*D$ is nef, $h$-big and $h$-semi-ample, applying Theorem \ref{thm-Zh16a} shows that for sufficiently divisible integers $n$ and $g$, the sheaf $F_Y^{g*}h_*\O_W(n\sigma^*D + K_{W/Y})$ contains a nonzero nef subbundle $V$.

\smallskip

We exclude the case $g(Y) > 1$ as follows. Otherwise, for sufficiently large $n$, the divisor $n\sigma^*D + K_W$ is big by Theorem \ref{cor-Zh16a}. We can find an effective $\sigma$-exceptional divisor $E$ on $W$ such that $K_W \leq \sigma^*K_X + E$. Then $n\sigma^*D + \sigma^*(K_X + B) + E  \geq n\sigma^*D + K_W + \sigma^*B$ is big. Applying Theorem \ref{ct},
$$\kappa(X, nD + (K_X + B)) = \kappa(W, n\sigma^*D + \sigma^*(K_X + B) + E) = 3.$$
As $K_X + B$ is numerically trivial, we conclude that $D$ is big, but this contradicts that $\nu(D) = 2$.

\smallskip

We may assume $Y$ is an elliptic curve. Next we will prove that there exists a semi-ample divisor $D' \equiv tD$ for some rational number $t>0$.
Since the subbundle $V\subseteq F_Y^{g*}h_*\O_W(n\sigma^*D + K_{W/Y})$ is nef, for every $L'' \in \mathrm{Pic}^0(Y)$, applying Riemann-Roch formula gives
$$h^0(Y, V\otimes L'') - h^1(Y, V\otimes L'') = \chi(Y, V\otimes L'') = \deg V \geq 0.$$
We claim that \emph{there exists some $L'' \in \mathrm{Pic}^0(Y)$ such that $h^0(Y, V\otimes L'') >0$}. Otherwise, for every $L'' \in \mathrm{Pic}^0(Y)$ and $i=0,1$, $h^i(Y, V\otimes L'') =0$, thus $R^i\Phi_{\mathcal{P}}V = 0, i=0,1$, i.e.,
the Fourier-Mukai transform $R\Phi_{\mathcal{P}}V$ is acyclic (\cite[p.53]{Har66}). Therefore $V=0$ by Theorem \ref{Mu}, and the claim follows from this contradiction.  Note that since $\dim Y =1$, by Proposition \ref{sep-fib} the fibration $h: W\to Y$ is separable, hence $W_{Y^{g}} = W\times_Y Y^g$ is integral.
Consider the following commutative diagram \\
\[\xymatrix@C=2cm{&W'\ar@/^1.5pc/[rr]|{\rho}\ar[r]^{\rho'}\ar[dr]^{h'} &W_{Y^{g}}\ar[r]^{\pi^{g}}\ar[d]^{h_{g}}     &W\ar[d]^h\\
&      &Y^{g}\ar[r]^{F_{Y}^{g}}     &Y\\
} \]
where $W'$ denotes the normalization of $W_{Y^{g}}$ and $\rho, \rho', h'$ denote the natural morphisms.
From the commutative diagram above, by \cite[Chapter III, Prop. 9.3]{Har77} we obtain
\begin{align*}
F_{Y}^{g*}h_*\mathcal{O}_W(n\sigma^*D + K_W)& \cong h_{g*}(\pi^{g*} \mathcal{O}_W(n\sigma^*D + K_W)) \cong h_{g*} \mathcal{O}_{W_{Y^g}}(\pi^{g*}(n\sigma^*D + K_W))  \\
&\hookrightarrow h'_*\mathcal{O}_{W'}(\rho'^* \pi^{g*}(n\sigma^*D + K_W)) = h'_*\mathcal{O}_{W'}(\rho^*(n\sigma^*D + K_W)).
\end{align*}
It follows that
\begin{align*}
h^0(\mathcal{O}_{W'}(\rho^*(n\sigma^*D + K_W))\otimes h^*L'') &=  h^0(Y, h'_*\mathcal{O}_{W'}(\rho^*(n\sigma^*D + K_W))\otimes L'') \\
& \geq h^0(Y, V\otimes L'') >0.
\end{align*}
Take $L' \in \mathrm{Pic}^0(Y)$ such that $F_Y^{g*}L' \sim L''$. Then applying Theorem \ref{ct}, we can show
\begin{align*}
\kappa(X, nD + (K_X + B) + f^*L') &\geq \kappa(W, n\sigma^*D + K_W+ h^*L') \\
&= \kappa(W',\rho^*(n\sigma^*D + K_W) + h'^*L'')  \geq 0.
\end{align*}
There exists an effective divisor $D_1 \sim_{\Q} nD + (K_X + B) + f^*L'$.
The pair $(X, B + tD_1)$ is klt for a sufficiently small rational number $t$, so the divisor $K_X + B + tD_1$ is semi-ample by Theorem \ref{fib-to-ab-var}.
Since $K_X + B \equiv 0$, we can set $D' = K_X + B + tD_1$.

\smallskip

Since $D'$ is $f$-ample and $\nu(D') = 2$, the associated morphism to $D'$ is a fibration $g: X \to Z$ to a surface. Let $X_{\xi}$ be the generic fiber of $g$. Then $X_{\xi}$ is a normal curve defined over $k(\xi)=K(Z)$ and dominant over $Y\otimes_k k(\xi)$. It follows that $p_a(X_{\xi}) \geq 1$, thus $K_{X_{\xi}}$ is semi-ample. Since $l(K_X + B)|_{X_{\xi}} \sim_{\Q} l(K_{X_{\xi}} + B|_{X_{\xi}})$ is numerically trivial, we conclude that $K_{X_{\xi}}  \sim_{\Q} K_X|_{X_{\xi}} \sim_{\Q} 0$ and $B|_{X_{\xi}}  = 0$, thus $l(K_X + B)|_{X_{\xi}} \sim_{\Q} f^*L|_{X_{\xi}} \sim_{\Q} 0$. Though the geometric generic fiber $X_{\bar{\xi}}$ is not necessarily reduced, we can apply Lemma \ref{inj-pic} to the morphism $(X_{\bar{\xi}})_{\mathrm{red}} \to Y\otimes_k k(\bar{\xi})$ and show that $L$ is torsion.

\subsection{Proof of Theorem \ref{Iit-conj}}

In the following we assume that $\kappa(X_{\ol\eta}, K_{X_{\ol\eta}} + B_{\ol\eta}) \geq 0$.

\smallskip

\textbf{Case (i) $\dim Y = 1$.}
If $\kappa(X_{\ol\eta}, K_{X_{\ol\eta}} + B_{\ol\eta}) = 0$ then we can use Theorem \ref{fib-to-curve}. If $\kappa(X_{\ol\eta}, K_{X_{\ol\eta}} + B_{\ol\eta}) = 1$ then we can use Theorem \ref{fib-to-curve-k=1}. If $\kappa(X_{\ol\eta}, K_{X_{\ol\eta}} + B_{\ol\eta}) = 2$, then we can use Theorem \ref{fib-to-ab-var} when $g(Y) = 1$ and use Corollary \ref{cor-Zh16a} when $g(Y) > 1$.




\smallskip

\textbf{Case (ii) $\dim Y = 2$.}
If $\kappa(X_{\ol\eta}, K_{X_{\ol\eta}} + B_{\ol\eta}) = 0$, then by the assumption $\spadesuit$, $f$ is an elliptic fibration (Proposition \ref{fib-pa=1}), applying \cite[Theorem 1.2]{CZ15} shows that
$$\kappa(X, K_X + B) \geq\kappa(X) \geq \kappa(Y).$$
So we may assume $K_X +B$ is $f$-big. We fall into three cases according to $\kappa(Y)$.
\smallskip

When $\kappa(Y) = 2$, we can use Corollary \ref{cor-Zh16a}.
\smallskip

When $\kappa(Y) = 1$, we need to prove that $\kappa(X, K_X + B) \geq 2$. Denote by $g: Y\to C$ the Iitaka fibration of $Y$ and by $\mathrm{alb}_{Y}: Y \to \mathrm{Alb}(Y)$ the Albanese map which is generically finite. Take a general fiber $G$ of $g$, which is an elliptic curve. Let $E=\mathrm{alb}_Y(G)$. We may assume $\mathrm{Alb}(Y)$ is an abelian variety and $E$ passes through the origin. Let $A(E)$ be the sub-abelian variety generated by $E$. It follows that $1\leq \dim A(E) \leq g(E) \leq g(G) =1$, hence the equalities are attained, and $E= A(E)$ is a sub-abelian variety.
Since the composition morphism $\phi: Y \to \mathrm{Alb}(Y) \to \mathrm{Alb}(Y)/E$ contracts the fiber $G$, applying Rigidity Lemma (\cite[Proposition 6.1]{GIT}), every fiber of $g$ is contracted by $\phi$, and $\phi: Y \to \mathrm{Alb}(Y)/E$ factors through $g: Y \to C$.
By the universal property
of the Albanese map we have the following commutative diagram
\[\xymatrix@C=1.7cm{&Y\ar[d]^g\ar[r]^{\mathrm{alb}_{Y}}  &\mathrm{Alb}(Y)\ar[d]^{h}\ar[dr]^{\phi} &\\
&C\ar[r]^{\mathrm{alb}_{C}} \ar[r]  &\mathrm{Alb}(C)\ar[r] &\mathrm{Alb}(Y)/E
}.\]
Wee see that the morphism $\mathrm{Alb}(C) \to \mathrm{Alb}(Y)/E$ is in fact an isomorphism, hence $h: \mathrm{Alb}(Y) \to \mathrm{Alb}(C)$ is of relative dimension one, in particular $g(C) \geq 1$.
Next let $Y \xrightarrow{\sigma} \bar{Y} \to \mathrm{Alb}(Y)$ be the Stein factorization. Then $\sigma: Y \to \bar{Y}$ is a birational morphism, and the $\sigma$-exceptional locus does not intersect the generic fiber of $g: Y \to C$. Therefore, the natural morphism $\bar{Y} \to \mathrm{Alb}(C)$ factors through a morphism $\bar{g}: \bar{Y} \to C$, which is induced by the Stein factorization.
Let $(\bar{X}, \bar{B})$ be a log minimal model of $(X,B)$ over $\bar{Y}$. By Theorem \ref{rel-mmp} (3.4), $(\bar{X}, \bar{B})$ is in fact minimal. Then we have the following commutative diagram
\[\xymatrix@C=1.7cm{&X\ar@{.>}[r]  &\bar{X}\ar[d]_{\bar{f}'}\ar[r]  &\bar{Y}\ar[r]\ar[d]^{\bar{g}}     &\mathrm{Alb}(Y)\ar[d]^h\\
& &C' \ar[r]  &C \ar[r]^{\mathrm{alb}_Y} &\mathrm{Alb}(C)
} \]
where $\bar{f}': \bar{X} \to C'$ is the fibration induced by the Stein factorization of $\bar{X} \to C$. Note that $g(C') \geq g(C) \geq 1$.
Denote by $\zeta'$ the generic point of $C'$. Consider the fibration $\bar{f}': \bar{X} \to C'$. By Theorem \ref{rel-mmp} (3.2), $(K_{\bar{X}} +\bar{B})_{\zeta'}$ is semi-ample. And since $K_{\bar{X}} +\bar{B}$ is big over $\mathrm{Alb}(Y)$, it is not numerically trivial over the generic point of $C'$, which implies that $\kappa(\bar{X}_{\zeta'}, (K_{\bar{X}} +\bar{B})_{\zeta'}) \geq 1$.
\smallskip

\begin{cln}\label{cln}
If $\kappa(\bar{X}_{\zeta'}, (K_{\bar{X}} +\bar{B})_{\zeta'}) = 1$ then $\bar{f}'$ satisfies the assumption $\spadesuit$.
\end{cln}
\emph{Proof.}
Denote by $\varphi: \bar{X} \dashrightarrow Z$ the relative Iitaka fibration induced by $K_{\bar{X}} +\bar{B}$ on $\bar{X}$ over $C'$ and by $\bar{X}_{\xi}$ the generic fiber of $\varphi$. Then $\bar{X}_{\xi}$ is a normal curve over $K(Z)$. As $K_{\bar{X}} +\bar{B}$ is big over $\mathrm{Alb}(Y)$ while $(K_{\bar{X}} +\bar{B})|_{\bar{X}_{\xi}} \sim_{\Q} 0$, we see that $\bar{X}_{\xi}$ is not contracted by $\bar{X} \to \mathrm{Alb}(Y)$, hence is generically finite over $\mathrm{Alb}(Y)\otimes_kK(Z)$. Therefore, $p_a(\bar{X}_{\xi}) \geq 1$, and $K_{\bar{X}_{\xi}}$ is semi-ample. Since $(K_{\bar{X}} +\bar{B})|_{\bar{X}_{\xi}} \sim_{\Q} K_{\bar{X}_{\xi}} + \bar{B}|_{\bar{X}_{\xi}}$ is numerically trivial,
we conclude that $\bar{B}|_{\bar{X}_{\xi}} = 0$, thus $\bar{f}'$ satisfies the assumption $\spadesuit$. \qed \\
So we can apply the results of Case (i) to the fibration $\bar{f}': \bar{X} \to C'$ and show that
$$\kappa(\bar{X}, K_{\bar{X}} +\bar{B}) \geq \kappa(\bar{X}_{\zeta'}, (K_{\bar{X}} +\bar{B})_{\zeta'}) + \kappa(C') \geq 1.$$
Hence $K_{\bar{X}} + \bar{B}$ is semi-ample by Theorem \ref{s-amp-k=2}. And by Corollary \ref{cor-Zh16a}, we have $\nu(\bar{X}, K_{\bar{X}} +\bar{B}) \geq 2$, hence $\kappa(\bar{X}, K_{\bar{X}} +\bar{B}) \geq 2$.



\smallskip

When $\kappa(Y) = 0$, we need to prove that $\kappa(X, K_X + B) \geq 1$. In this case $Y$ is birationally equivalent to an abelian surface (\cite[Sec. 10]{Ba01}). We may assume $Y$ is an abelian surface and assume $(X,B)$ is minimal by working on a log minimal model over $Y$. If $Y$ is simple then we can use Theorem \ref{fib-to-ab-var}. Otherwise, $Y$ admits a fibration to an elliptic curve, then we get a fibration $f': X \to C'$ with $g(C') =1$. Let $\zeta'$ denote the generic point of $C'$. Again since $K_X + B$ is big over $Y$, we have $\kappa(X_{\zeta'}, K_{X_{\zeta'}} + B_{\zeta'}) \geq 1$, and if $\kappa(X_{\zeta'}, K_{X_{\zeta'}} + B_{\zeta'}) = 1$
we can verify that $f'$ satisfies the assumption $\spadesuit$ by the same proof of Claim \ref{cln}. Applying the results of Case (i) to the fibration $f': X \to C'$, we complete the proof by
$$\kappa(X, K_X + B) \geq \kappa(X_{\zeta'}, K_{X_{\zeta'}} + B_{\zeta'}) + \kappa(C') \geq 1.$$

\section{Abundance}\label{sec-pf-abundance}
In this section, we will prove Theorem \ref{abundance-log}.
By Theorem \ref{s-amp-k=2}, we only need to show that either $\kappa(X, K_X + B) \geq 1$ or $K_X + B\sim_{\Q} 0$. Hence we may assume $\kappa(X, K_X + B) \leq 0$. If $X$ is of maximal Albanese dimension, then we are done by \cite[Theorem 1.1]{Zh16c} or \cite[Theorem 0.3]{HPZ17}. Let $f:X\to Y$ be the fibration arising from the Stein factorization of $a_X$. Then $\kappa(X_{\eta}, (K_X + B)_{\eta}) \geq 0$ by Theorem \ref{rel-mmp} (3.2). Therefore, by Theorem \ref{Iit-conj} it is only possible that
$\kappa(Y) = \kappa(X_{\eta}, (K_X + B)_{\eta}) = 0$. If $\dim Y = 1$ then we can use Theorem \ref{fib-to-curve}. If $\dim Y = 2$ then $B=0$ by the assumption (1), and $f$ is an elliptic fibration by Proposition \ref{fib-pa=1}, so $X$ is non-uniruled, and this case has been treated in \cite[Theorem 1.1]{Zh16c}.

\end{document}